\newcommand{\Z}{\mathbb{Z}}
\newcommand{\pres}[2]{\langle {#1}\ |\ {#2} \rangle}
\newtheorem{theorem}{Theorem}
\newtheorem{conjecture}[theorem]{Conjecture}
\newtheorem{lemma}[theorem]{Lemma}
\newtheorem{corollary}[theorem]{Corollary}
\numberwithin{theorem}{section}
\theoremstyle{definition}
\newtheorem{defn}[theorem]{Definition}
\newtheorem{example}[theorem]{Example}
\begin{document}
\title{Redundant relators in cyclic presentations of groups}
\author{Ihechukwu Chinyere and Gerald Williams\thanks{This work was supported by the Leverhulme Trust Research Project Grant RPG-2017-334 and partially supported by the grant 346300 for IMPAN from the Simons Foundation and the matching 2015-2019 Polish MNiSW fund.}}

\maketitle

\begin{abstract}
  A cyclic presentation of a group is a presentation with an equal number of generators and relators that admits a particular cyclic symmetry. We characterise the orientable, non-orientable, and redundant cyclic presentations and obtain concise refinements of these presentations. We show that the Tits alternative holds for the class of groups defined by redundant cyclic presentations and that
  if the number of generators of the cyclic presentation is greater than two then the corresponding group is large. Generalizing and extending earlier results of the authors we describe the star graphs of orientable and non-orientable cyclic presentations and classify the cyclic presentations whose star graph components are pairwise isomorphic incidence graphs of generalized polygons, thus classifying the so-called $(m,k,\nu)$-special cyclic presentations.
\end{abstract}

\noindent \textbf{Keywords:} cyclically presented group, redundant presentation, concise presentation, orientable presentation, Tits alternative, large group, star graph, generalized polygon, projective plane.

\noindent \textbf{MSCs:} 20F05, 20E05, 05E18 (primary); 20E42, 20F67, 51E24, 52B05, 57M07 (secondary).

\section{Introduction}\label{sec:intro}

A cyclic presentation is a group presentation with an equal number of generators and relators that admits a particular cyclic symmetry and the corresponding group is a cyclically presented group \cite{Johnson97} (see Section \ref{sec:prelims} for definitions and background). Cyclic presentations in which certain relators are cyclic permutations of other relators or their inverses are called redundant, and in the latter situation they are non-orientable. A presentation with no such relators is called concise and a concise presentation obtained from a presentation by removing redundant relators is called a concise refinement.

In this paper we classify the redundant cyclic presentations, we show that the class of groups defined by such presentations satisfies the Tits alternative, and we classify the (redundant and concise) $(m,k,\nu)$-special cyclic presentations (that is, the cyclic presentations with length $k$ relators, where the star graph has $\nu$ components, each of which is the incidence graph of a generalized $m$-gon).

The paper is organised as follows. In Section \ref{sec:classificationredundant} we observe that redundant cyclic presentations are ubiquitous, in the sense that for any $n\geq 2$ and any non-empty word $u$ in the free group of rank $n$, it is possible to construct both orientable and non-orientable redundant cyclic presentations using these $n,u$. We classify the orientable (redundant and concise) presentations and classify the non-orientable cyclic presentations; in each case we obtain concise refinements of these presentations.
In Section \ref{sec:deficiencylargeness} we show that if a redundant cyclic presentation has more than two generators then the group it defines is large and, in particular, motivated by \cite[Problem 2]{EdjvetVdovina} (which asks which groups defined by $(m,k$)-special cyclic presentations are large), we observe that groups defined by redundant $(m,k,\nu)$-special cyclic presentations are large. We show that the Tits alternative holds for the class of groups defined by redundant cyclic presentations and we investigate which redundant cyclic presentations with two generators define groups that contain a non-abelian free subgroup, providing a classification in the orientable case.
In Section \ref{sec:stargraphs} we generalize \cite[Theorem 3.3]{ChinyereWilliamsGP} to describe the star graphs of orientable and non-orientable cyclic presentations.
In Section \ref{sec:specialpresentationsclassification} we extend the classification in \cite[Sections 5,6]{ChinyereWilliamsGP} to classify the (redundant and concise) $(m,k,\nu)$-special cyclic presentations. In particular, we show that if a cyclic presentation is $(m,k,\nu)$-special, where $m\geq 3$ then $m=3$, the presentation is orientable, and the defining word is positive or negative, and we show that if a redundant cyclic presentation is $(m,k,\nu)$-special then $1/m+2/k<1$, and hence defines a non-elementary hyperbolic group.

\section{Preliminaries}\label{sec:prelims}

\subsection{Presentations and cyclic presentations of groups}\label{sec:wordsinpresentations}

Given a positive integer $n$, let $F_n$ be the free group with basis $X=\lbrace x_0, \ldots, x_{n-1}\rbrace$. A non-empty word $w\in F_n$ is said to be \em positive \em (resp.\,\em negative\em) if all of the exponents of generators are positive (resp.\,negative). (In particular, generators $x_i$ are positive, and their inverses $x_i^{-1}$ are negative.) We shall say that a word $w$ of length at least 2 is \em alternating \em if it has no subword of the form $(x_ix_j)^{\pm 1}$ and that it is \em cyclically alternating \em if it has no cyclic subword of that form. Thus a word is cyclically alternating if and only if it is alternating and has even length. A word $w$ is \em reduced \em if it does not contain a subword of the form $x_ix_i^{-1}$ or $x_i^{-1}x_i$; it is \em cyclically reduced \em if all cyclic permutations of it are reduced. If $w\in F_n$ is a cyclically reduced, non-empty, word then the unique word $v\in F_n$ such that $w=v^p$ with $p$ maximal is called the \em root \em of $w$. We shall write $l(w)$ to denote the length of $w$ in $F_n$. Throughout this article equality of words will refer to equality of elements of the free group $F_n$.

When considering group presentations $\pres{X}{R}$, $R$ will be a set of relators (so does not contain any relator more than once), all of whose elements are cyclically reduced. Let $\theta: F_n\rightarrow F_n$ be the \em shift automorphism \em given by $\theta(x_i)=x_{i+1}$ where (as throughout this article) subscripts are taken modulo $n$. Given a non-empty cyclically reduced word $w$ representing an element in $F_n$, the words $\theta^i(w)$, $0\leq i<n$ (resp.\,$1\leq i<n$),  are the \em shifts \em (resp.\,\em proper shifts\em) of $w$, the presentation
\[P_n(w)=\pres{x_0,\ldots, x_{n-1}}{w,\theta(w), \ldots, \theta^{n-1}(w)}\]
is a \em cyclic presentation, \em the group $G_n(w)$ it defines is a \em cyclically presented group \em and $w$ is the \em defining word\em. The shift automorphism $\theta$ satisfies $\theta^n=1$ and the resulting $\Z_n$-action on $G_n(w)$ determines the \em shift extension \em $E=G_n(w)\rtimes_\theta \Z_n$, which admits a presentation of the form $\pres{x,t}{t^n,W(x,t)}$ where $W(x,t)$ is obtained by rewriting $w$ in terms of the substitutions $x_i=t^ixt^{-i}$, $0\leq i<n$ (see, for example, \cite[Theorem 4]{JWW}).

For $1\leq t\leq n$ we define the \em $t$-truncation \em  of $P_n(w)$ to be the presentation
\[P_{n,t}(w)=\pres{x_0,\ldots, x_{n-1}}{w,\theta(w), \ldots, \theta^{t-1}(w)}\]
and denote by $G_{n,t}(w)$ the group that it defines. Let $\phi: F_n\rightarrow F_n$ be the cyclic permutation function that cyclically permutes a word by one generator (or the inverse of a generator), with inverse $\phi^{-1}$. Then $\phi$ and $\theta$ commute and, if $w$ is a cyclically reduced word of length $k$, then $\phi^{k}(w)=w$. It follows that if $w$ is a cyclically reduced word of length $k$ that is equal to a cyclic permutation of the shift $\theta^h(w)$ then a cyclic permutation of $w$ is equal to the shift $\theta^{(n,h)}(w)$ and if a shift of $w$ is equal to a cyclic permutation $\phi^t(w)$ of $w$ then a shift of $w$ is equal to the cyclic permutation $\phi^{(k,t)}(w)$. (To see the first claim, let $\alpha,\beta\in \Z$ satisfy $\alpha h + \beta n=(n,h)$ and observe that $w$ is equal to a cyclic permutation of $\theta^{\alpha h}(w)=\theta^{\alpha h+\beta n}(w)=\theta^{(n,h)}(w)$. Similar arguments hold for second claim.) Moreover, $\phi^r(v^p)=(\phi^r(v))^p$ and $\theta^h(v^p)=(\theta^h(v))^p$ for any $h,r,p\geq 1$ so if $w=v^p$ where $v$ is the root of $w$ then $\theta^h(w)=\phi^r(w)$ if and only if $\theta^h(v)=\phi^r(v)$. We will write $\iota(w),\tau(w)$ to denote the initial and terminal letters of a word $w$, respectively.

Following \cite[page 160]{BogleyShift}, given a group presentation $P=\pres{X}{R}$, an element $r\in R$ is said to be \em freely redundant \em if it is freely trivial or if there exists another element $s\in R$ such that $r$ and $s$ are elements of the free group with basis $X$ and either $r$ is freely conjugate to $s$ or $r$ is freely conjugate to $s^{-1}$ (that is, either $r$ is a cyclic permutation of $s$ or of $s^{-1}$). A presentation is said to be \em redundant \em if it contains a freely redundant relator and is \em concise \em otherwise (\cite[page 4]{CCH}). (Concise presentations are referred to as \em slender presentations \em in \cite[page 5]{BogleyPride} or \em irredundant \em presentations in \cite[page 82]{Rutter}). If a presentation $P'$ is obtained from a presentation $P$ by removing freely redundant relators then we say that $P'$ is a \em refinement \em of $P$ and if, in addition, $P'$ is concise we say that it is a \em concise refinement \em of $P$  (compare \cite[page 4]{CCH}). A cyclic presentation $P_n(w)$ is \em orientable \em if $w$ is not a cyclic permutation of the inverse of any of its shifts (\cite[page~155]{BogleyShift}) and is \em non-orientable \em otherwise. Thus, an orientable cyclic presentation is redundant if and only if $w$ is equal to a cyclic permutation of one of its proper shifts, and if a cyclic presentation is non-orientable then it is redundant. As examples, the cyclic presentation $P_2(x_0x_1)$ is orientable and redundant, $P_3(x_0x_1)$ is orientable and concise, and $P_2(x_0x_1^{-1})$ is non-orientable (and therefore redundant).

The \em deficiency \em of the presentation $P=\pres{X}{R}$ is defined as $\mathrm{def}(P)=|X|-|R|$ and the \em deficiency \em of a group $G$, $\mathrm{def}(G)$, is defined to be the maximum of the deficiencies of all finite presentations defining $G$. A group $G$ is \em large \em if it has a finite index subgroup that has a non-abelian free homomorphic image and it is \em SQ-universal \em if every countable group embeds in a quotient of $G$; every large group is SQ-universal, and hence contains a non-abelian free subgroup \cite{Pride80}. A class of groups is said to satisfy the \em Tits alternative \em if every group in that class either contains a non-abelian free subgroup or is virtually solvable.

\subsection{Star graphs}\label{sec:stargraph}

Let $P = \pres{X}{R}$ be a group presentation and let $\tilde{R}$ denote the symmetrized closure of $R$; that is, the set of all cyclic permutations of elements in $R\cup R^{-1}$. The \em star graph \em of $P$ is the undirected vertex-labelled graph $\Gamma$ where the vertex set is in one-one correspondence with $X\cup X^{-1}$, vertices are labelled by the corresponding element of $X\cup X^{-1}$ and where there is an edge  joining vertices labelled $x$ and $y$ for each distinct word $xy^{-1}u$ in $\tilde{R}$~\cite[page~61]{LyndonSchupp}. Such words occur in pairs, that is $xy^{-1}u\in \tilde{R}$ implies that $yx^{-1}u^{-1}\in \tilde{R}$. These pairs are called \em inverse pairs \em and the two edges corresponding to them are identified in $\Gamma$. It follows that replacing any relator of a presentation by its root, or removing a redundant relator from a presentation, leaves the star graph unchanged; in particular, the star graphs of a presentation and any concise refinement of it are equal. We
refer to vertices in $X$ as \em positive \em vertices and vertices in $X^{-1}$ as \em negative \em vertices.

We now set out our graph theoretic terminology. We allow graphs to have loops and to have more than one edge joining a pair of vertices. Given a graph $\Gamma$ we write $V(\Gamma)$ to denote its vertex set. If $\Gamma$ is bipartite with vertex partition $V(\Gamma)=V_1\cup V_2$ where each edge connects a vertex in $V_1$ to a vertex in $V_2$ then $V_1,V_2$ are called the \em parts \em of $V(\Gamma)$. Two adjacent vertices are said to be \em neighbours \em and the set of neighbours of a vertex $v$ in a graph $\Gamma$ is denoted $N_\Gamma(v)$. A graph $\Gamma$ is \em $r$-regular
\em if $|N_\Gamma(v)|=r$ for all $v\in V(\Gamma)$ and it is \em regular \em if it is $r$-regular for some $r$.

A \em path \em of \em length \em $l$ in $\Gamma$ is a sequence of vertices $(u=u_0,u_1,\ldots ,u_l=v)$ with edges $u_i-u_{i+1}$ for each $0\leq i<l$; it is a \em closed path \em if $u=v$. The path is \em reduced \em if the edge $u_{i+1}-u_{i+2}$ is not equal to the edge $u_{i+1}-u_{i}$ ($0\leq i<l-1$). The \em distance \em  $d_\Gamma (u,v)$ between vertices $u,v$ of $\Gamma$ is $l\geq 0$ if there is a path of length $l$ from $u$ to $v$, but no shorter path, and $d_\Gamma(u,v)=\infty$ if there is no path from $u$ to $v$. The \em girth\em, $\mathrm{girth}(\Gamma)$ of a graph $\Gamma$ is the length of a reduced closed path of minimal length, if $\Gamma$ contains a reduced closed path, and $\mathrm{girth}(\Gamma)=\infty$ otherwise. The \em diameter\em, $\mathrm{diam}(\Gamma)$ of a graph $\Gamma$ is the greatest distance  between any pair of vertices of the graph (which may be infinite). If $\Gamma$ is a graph with finite girth then $\mathrm{girth}(\Gamma)\leq 2\mathrm{diam}(\Gamma)+1$. For an integer $n\geq 2$ and a (multi-)set $A \subseteq \{0, 1, \ldots , n-1\}$, the \em circulant graph \em $\mathrm{circ}_n(A)$ is the graph with vertices $v_0, \ldots , v_{n-1}$ and edges $v_i -v_{i+a}$ for all $0 \leq i < n$, $a \in A$ (subscripts $\bmod n$). We define the graph $\mathrm{circ}'_n(A)$ to be $\mathrm{circ}_n(A)$ if $n/2 \not \in A$ and to be $\mathrm{circ}_n(A)$ with exactly one edge $v_i-v_{i+n/2}$ removed for each $n/2\leq i <n$ otherwise. For later use we note that $\mathrm{circ}_n(A)$ is the complete bipartite graph $K_{n/2,n/2}$ if and only if $n/2$ is even and $A=\{ \pm 1, \pm 3 , \ldots , \pm(n/2-1)\}$ and $\mathrm{circ}'_n(A)$ is the complete bipartite graph $K_{n/2,n/2}$ if and only if $n/2$ is odd and $A=\{ \pm 1, \pm 3 , \ldots , \pm (n/2-2), n/2 \}$.

\subsection{Special presentations}\label{sec:specialpresentation}

An $(m,k,\nu)$-special presentation is a group presentation in which the relators have length $k$ and whose star graph has $\nu$ isomorphic components, each of which is the incidence graph of a generalized $m$-gon. Formally:

\begin{defn}[{\cite[Definition~2.1]{ChinyereWilliamsGP}}]\label{def:mkalphaspecialpres}
Let $m\geq 2, k\geq 3,\nu\geq 1$. A  finite group presentation $P = \pres{X}{R}$ is said to be \em $(m, k,\nu)$-special \em if the following conditions hold:
\begin{itemize}
  \item[(a)] the star graph $\Gamma$ of $P$ has $\nu$ isomorphic components, each of which is a connected, bipartite graph of diameter $m$ and girth $2m$ in which each vertex has degree at least $3$;
  \item[(b)] each relator $r\in R$ has length $k$;
  \item[(c)] if $m=2$ then $k\geq 4$.
\end{itemize}
\end{defn}

This generalizes the concept of $(m,k)$-special cyclic presentations, introduced in \cite{EdjvetVdovina}, which corresponds to the case $\nu=1$, which in turn generalize the concept of special presentations, introduced in~\cite{Howie89} (which corresponds to the case $m=k=3$). The concise cyclic presentations that are $(m,k,\nu)$-special were classified in \cite{ChinyereWilliamsGP}. We refer the reader to \cite{EdjvetVdovina,ChinyereWilliamsGP} for background and further references on properties of $(m,k,\nu)$-special presentations and the groups they define.

Note that if a presentation is $(m,k,\nu)$-special then it has at least 3 generators. As in \cite[Proof of Theorem 2]{EdjvetVdovina}, a group $G$ defined by an $(m,k,\nu)$-special presentation with $2/k+1/m<1$ is non-elementary hyperbolic, and hence SQ-universal. If $w=v^p$ where $v$ is the root of $w$ then the star graph $\Gamma$ of $P_n(w)$ is equal to the star graph of $P_n(v)$. Moreover, if $v$ has length 2 then the vertices of $\Gamma$ have degree at most 2 so neither $P_n(v)$ nor $P_n(w)$ are $(m,k,\nu)$-special. Therefore $P_n(w)$ is $(m,pk,\nu)$-special if and only if $P_n(v)$ is $(m,k,\nu)$-special. Thus, in classifying $(m,k,\nu)$-special cyclic presentations $P_n(w)$ we can assume that $w$ is not a proper power. For our characterisation of $(3,k,\nu)$-special cyclic presentations we recall that a set of $k$ integers $d_1,\ldots ,d_k$ is called a \em perfect difference set \em (of order $k$) if among the $k(k-1)$ differences $d_i-d_j$ ($i\neq j$) each of the residues $1,2,\ldots , (k^2-k) \bmod (k^2-k+1)$ occurs exactly once.

\section{Classification of redundant cyclic presentations}\label{sec:classificationredundant}

\subsection{Classification of orientable redundant cyclic presentations}\label{sec:classificationorientableredundantNEW}

Suppose that $P_n(w)$ is orientable and that $v$ is the root of $w$ and recall from Section \ref{sec:wordsinpresentations} that a cyclic permutation of $w$ is equal to one of its proper shifts if and only if a cyclic permutation of $v$ is equal to of one of its proper shifts. It follows that $P_{n,t}(w)$ is a concise refinement of $P_n(w)$ if and only if $P_{n,t}(v)$ is a concise refinement of $P_n(v)$. Therefore, in this context we may assume that $w$ is not a proper power.

Observe that any non-empty word $w \in F_n$ has an expression of the form
\begin{alignat}{1}
w=\prod_{i=0}^{n/(n,h )-1} \theta^{ih}(u)\label{eq:thetaimform}
\end{alignat}
for some $u \in F_n$ of length $l(u)\geq 1$ and some $0\leq h<n$ (where $u=w$ in the case $h=0$). Therefore $w$ satisfies $\phi^{l(u)}(w)=\theta^h(w)$, and hence a cyclic permutation of $w$ is equal to the shift $\theta^{(n,h )}(w)$, which is proper if $0< h <n$.

Note that $w$ is positive if and only if $u$ is positive, $w$ is cyclically alternating if and only if $u$ is cyclically alternating, and if $w\neq u$ then $w$ is alternating if and only if it is cyclically alternating. For later use we record that, given a word $w$ of the form (\ref{eq:thetaimform}), every cyclic permutation of $w$ has an expression of the same form, where the length of $u$ and the value of $h $ are preserved.

\begin{lemma}\label{lem:allcycpermshavethisform}
Let $w=\prod_{i=0}^{n/(n,h )-1}\theta^{ih }(u)$ for some reduced word $u$ and for some $0\leq h <n$. Then every cyclic permutation  of $w$ is of the form
\(\prod_{i=0}^{n/(n,h )-1}\theta^{ih }(v)\)
for some reduced word $v$ where $l(v)=l(u)$.
\end{lemma}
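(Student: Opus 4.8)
The plan is to reduce an arbitrary cyclic permutation of $w$ to the required shape using the two elementary facts recorded just before the statement. Write $q=n/(n,h)$ for the number of blocks; since $w$ is cyclically reduced and, by the derivation of (\ref{eq:thetaimform}), is the literal concatenation of its blocks $\theta^{ih}(u)$ (each of length $l(u)$), we have $l(w)=q\,l(u)$. I would use two facts: first, the fundamental relation $\phi^{l(u)}(w)=\theta^{h}(w)$ noted above, which iterates (using that $\phi$ and $\theta$ commute) to $\phi^{a\,l(u)}(w)=\theta^{ah}(w)$ for all $a\geq 0$; and second, that $\theta^{qh}=1$, since $qh=n\cdot\bigl(h/(n,h)\bigr)$ is a multiple of $n$ and $\theta^{n}=1$. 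Every cyclic permutation of $w$ is $\phi^{j}(w)$ for some $0\leq j<q\,l(u)$, so I would write $j=a\,l(u)+b$ with $0\leq b<l(u)$ and treat the full-block part $a\,l(u)$ and the within-block part $b$ separately.

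The full-block part is immediate: distributing $\theta^{ah}$ over the product and re-indexing gives
\[
\phi^{a\,l(u)}(w)=\theta^{ah}(w)=\prod_{i=0}^{q-1}\theta^{(a+i)h}(u)=\prod_{i=0}^{q-1}\theta^{ih}\bigl(\theta^{ah}(u)\bigr),
\]
which is already of the required form, with new factor $\theta^{ah}(u)$ of length $l(u)$. It therefore remains only to show that for $0\leq b<l(u)$ the map $\phi^{b}$ sends a word $W=\prod_{i=0}^{q-1}\theta^{ih}(v)$ with $l(v)=l(u)$ to another word of the same form.

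This within-block step is the crux. I would split the factor as $v=s_1s_2$ with $l(s_1)=b$, so that $\phi^{b}$ moves the prefix $s_1$ of the leading block to the very end, and claim that
\[
\phi^{b}(W)=\prod_{i=0}^{q-1}\theta^{ih}(v'),\qquad v'=s_2\,\theta^{h}(s_1),
\]
a word of length $(l(u)-b)+b=l(u)$. To verify this I would expand the right-hand side using $\theta^{ih}(v')=\theta^{ih}(s_2)\,\theta^{(i+1)h}(s_1)$ and telescope: the interior factors recombine as $\theta^{ih}(s_1)\theta^{ih}(s_2)=\theta^{ih}(v)$, leaving the leading $s_2$ and the trailing $\theta^{qh}(s_1)=s_1$, which reproduces $\phi^{b}(W)=s_2\,\theta^{h}(v)\cdots\theta^{(q-1)h}(v)\,s_1$; this is exactly where $\theta^{qh}=1$ is used. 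Finally $v'$ is the length-$l(u)$ prefix of the reduced word $\phi^{j}(w)$ (reduced because cyclic permutations of the cyclically reduced word $w$ are reduced), so $v'$ is reduced, completing the argument. I expect this telescoping re-blocking of a cut that falls in the middle of a block to be the only genuine obstacle, the full-block shifts being handled directly by the fundamental relation.
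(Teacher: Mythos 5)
Your argument is correct and is essentially the paper's proof: writing the cyclic shift amount as $a\,l(u)+b$ and composing the full-block shift with the within-block re-blocking yields exactly the paper's explicit factor $v=\theta^{th}(u_2)\theta^{(t+1)h}(u_1)$ (with $t=a$, $r=b$). The paper simply states this formula directly from the decomposition $s=tl+r$, whereas you verify it via the telescoping computation, which is a fair expansion of the same idea.
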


\begin{proof}
Let $l=l(u)$ and $0\leq s<l(w)$ and write $s=tl+r$ where $0\leq t<n/(n,h )$, $0\leq r<l$. Then $\phi^s(w)=\prod_{i=0}^{n/(n,h )-1}\theta^{ih }(v)$ where $v=\theta^{th }(u_2)\theta^{(t+1)h }(u_1)$ where $u_1$ is the initial subword of $u$ of length $r$ and $u_2$ is the terminal subword of $u$ of length $l-r$.
\end{proof}
Theorem \ref{thm:classificationoforientableredundantpresentations28092021} shows that, conversely to the observations made above, if $w$ is not a proper power and if a shift of $w$ is equal to a cyclic permutation of $w$, then $w$ is of the form (\ref{eq:thetaimform}) for some $0\leq h <n$. Corollary \ref{cor:conciserefinementorientable} shows that if $u$ is chosen to be the shortest possible then (for the corresponding value of $h $) the truncation $P_{n,(n,h )}(w)$ is a concise refinement of $P_n(w)$.

\begin{theorem}\label{thm:classificationoforientableredundantpresentations28092021}
Let $w\in F_n$ be a non-empty cyclically reduced word of length $k$ that is not a proper power and suppose that some shift of $w$ is equal to $\phi^t(w)$ for some $0\leq t<k$. Then \[w=\prod_{i=0}^{n/(n,h ) -1}\theta^{ih }(u)\]
for some $0\leq h<n$, where $u$ is the initial subword of $w$ of length $(k,t)$.
\end{theorem}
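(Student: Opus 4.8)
The plan is to begin by reducing to the clean situation in which the relevant cyclic permutation shifts by a \emph{divisor} of $k$. By the observation recorded in Section~\ref{sec:wordsinpresentations}, the hypothesis that some shift $\theta^{h_0}(w)$ equals $\phi^t(w)$ implies that some shift equals $\phi^{(k,t)}(w)$. Writing $d=(k,t)$ (so $d\mid k$) and renaming, I would fix $h$ with $0\le h<n$ such that $\theta^h(w)=\phi^d(w)$; this $h$ will be the one appearing in the conclusion, and $u$ will be the initial subword $w_0\cdots w_{d-1}$ of length $d=(k,t)$.

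Next I would extract a letter-wise recurrence. Writing $w=w_0w_1\cdots w_{k-1}$ as a product of single letters, the fact that $w$ is cyclically reduced ensures that both $\theta^h(w)$ and $\phi^d(w)$ are reduced words of length $k$, so the identity $\theta^h(w)=\phi^d(w)$ may be read off position by position, giving $w_{j+d}=\theta^h(w_j)$ for all $j$ (subscripts mod $k$). A straightforward induction on $i$ then yields $w_{id+r}=\theta^{ih}(w_r)$ for $0\le r<d$, which says precisely that $w$ factors as $\prod_{i=0}^{m-1}\theta^{ih}(u)$ with $m=k/d$ blocks; there is no cancellation to worry about here, since we are merely regrouping the letters of $w$.

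The step I expect to be the crux is identifying the number of blocks as $n/(n,h)$. I would first feed the recurrence once more around the cycle: since it holds for all indices mod $k$, taking $i=m$ gives $w_r=w_{md+r}=\theta^{mh}(w_r)$ for every $r$ (as $md\equiv 0\bmod k$), and because each $w_r$ is a single letter this forces $mh\equiv 0\bmod n$, hence $n/(n,h)\mid m$. On the other hand $\theta^{(n/(n,h))h}=1$, so the sequence of blocks $\theta^{ih}(u)$ is periodic in $i$ with period $n/(n,h)$; thus if $m$ were a proper multiple of $n/(n,h)$ then $w$ would equal $v^{m/(n/(n,h))}$ with $v=\prod_{i=0}^{n/(n,h)-1}\theta^{ih}(u)$, making $w$ a proper power and contradicting the hypothesis. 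Therefore $m=n/(n,h)$, giving the stated factorisation.

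The only delicate points are bookkeeping: one must check the orientation convention for $\phi$ (which at worst replaces $h$ by $-h$ and leaves the conclusion unchanged up to reindexing) and confirm that reading the equality off letter by letter is legitimate, for which cyclic reducedness of $w$ is exactly what is needed. All of the conceptual content sits in the last paragraph, where the mod-$k$ periodicity coming from $\phi^d$ and the mod-$n$ periodicity coming from $\theta^h$ are reconciled by means of the not-a-proper-power hypothesis.
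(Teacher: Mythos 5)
Your proposal is correct and follows essentially the same route as the paper: the same reduction from $\phi^t$ to $\phi^{(k,t)}$ via the observation in Section~\ref{sec:wordsinpresentations}, the same recurrence $u_{i+1}=\theta^h(u_i)$ on the length-$(k,t)$ blocks (which you phrase letter by letter), the same conclusion $\lambda h\equiv 0\bmod n$ from going once around the cycle, and the same proper-power contradiction to pin down the number of blocks as $n/(n,h)$. The only cosmetic difference is that the paper works directly with the blocks $u_i$ rather than individual letters.
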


\begin{proof}
Since some shift of $w$ is equal to $\phi^t(w)$, we have $\theta^{h}(w)=\phi^{(k,t)}(w)$ for some $0\leq h<n$. Let $\lambda=k/(k,t)$ and write $w=\prod_{i=0}^{\lambda-1} u_i$ for some words $u_0,\ldots ,u_{\lambda-1}$, each of length $(k,t)$. Then
\[ \prod_{i=0}^{\lambda-1} u_{i+1} = \phi^{(k,t)}(w)= \theta^h(w) = \prod_{i=0}^{\lambda-1} \theta^h(u_i)\]
(subscripts $\bmod\;\lambda$). Thus for each $0\leq i<\lambda$ we have $u_{i+1}=\theta^h(u_i)$ so $u_{i+1}=\theta^{(i+1)h}(u_0)$. In particular, $u_0=u_\lambda=\theta^{\lambda h}(u_0)$, so $\lambda h\equiv 0 \bmod n$ so $\lambda \equiv 0 \bmod n/(n,h)$, so $\lambda = pn/(n,h)$ for some $p\geq 1$. If $p>1$ then $n/(n,h)<\lambda$ and  $u_{n/(n,h)}=\theta^{(n/(n,h))\cdot h}(u_0)=u_0$, so $w$ is a proper power, a contradiction. Therefore $\lambda=n/(n,h)$ and hence %
\[ w=\prod_{i=0}^{\lambda-1} u_i=\prod_{i=0}^{n/(n,h)-1} \theta^{ih}(u)\]
where $u=u_0$, as required.
\end{proof}

\begin{corollary}\label{cor:conciserefinementorientable}
Let $w\in F_n$ be a non-empty cyclically reduced word of length $k$ that is not a proper power and let $u$ be the shortest subword of $w$ such that
\( w=\prod_{i=0}^{n/(n,h) -1}\theta^{ih}(u) \)
for any $0\leq h<n$. Then $P_{n,(n,h)}(w)$ is a concise refinement of $P_n(w)$.
\end{corollary}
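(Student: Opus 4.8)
The plan is to set $d=(n,h)$ and to establish two things separately: that $P_{n,d}(w)$ is a \emph{refinement} of $P_n(w)$ — every relator $\theta^i(w)$ with $d\le i<n$ can be stripped off as freely redundant while retaining $\theta^0(w),\dots,\theta^{d-1}(w)$ — and that the resulting presentation $P_{n,d}(w)$ is \emph{concise}, i.e. no $\theta^i(w)$ with $0\le i<d$ is freely redundant. Two preliminary observations drive everything. First, since $w$ is non-empty, comparing subscripts shows that $\theta^g(w)=w$ if and only if $n\mid g$, so the $n$ shifts $\theta^0(w),\dots,\theta^{n-1}(w)$ are pairwise distinct. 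Second, because $\theta$ and $\phi$ commute, the set $S=\{\,g\in\Z_n:\theta^g(w)=\phi^c(w)\text{ for some }c\,\}$ of shifts carrying $w$ to a cyclic permutation of itself is closed under addition and negation, hence is a subgroup of $\Z_n$; I would write $S=\langle d_0\rangle$ with $d_0\mid n$ its least positive element (which exists because $P_n(w)$ is redundant, so $S\neq\{0\}$).

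Next I would pin down $d$. Any decomposition $w=\prod_{i=0}^{n/(n,h')-1}\theta^{ih'}(u')$ satisfies $\theta^{h'}(w)=\phi^{l(u')}(w)$, so its shift $h'$ lies in $S$, and counting lengths gives $l(u')=k(n,h')/n$. Thus the block length is minimised exactly when $(n,h')$ attains its least value over $S\setminus\{0\}$, namely $d_0$ (this minimum is realised, via Theorem \ref{thm:classificationoforientableredundantpresentations28092021} applied to the shift $d_0$ itself). Since $u$ is chosen to be shortest, this forces $d=(n,h)=d_0$; note also that $h\in S=\langle d_0\rangle$ already gives $d_0\mid d$, so the content here is the reverse inequality supplied by minimality.

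For the refinement, $h\in S$ gives $d=(n,h)\in S$, so $\theta^d(w)=\phi^e(w)$ for some $e$; iterating and using $\theta\phi=\phi\theta$ yields $\theta^{qd+r}(w)=\phi^{qe}(\theta^r(w))$. Hence each $\theta^i(w)$ with $d\le i<n$ is a cyclic permutation of $\theta^{i\bmod d}(w)$, a relator that is never deleted and, by the first observation, distinct from $\theta^i(w)$; so $\theta^i(w)$ is freely redundant, and since its witness $\theta^{i\bmod d}(w)$ persists throughout, these relators may be removed one at a time to produce $P_{n,d}(w)$. For conciseness, suppose some $\theta^i(w)$ with $0\le i<d$ were freely redundant in $P_{n,d}(w)$. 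It is cyclically reduced and non-empty, hence not freely trivial, and orientability of $P_n(w)$ forbids it from being a cyclic permutation of the inverse of any shift; so $\theta^i(w)=\phi^c(\theta^j(w))$ for some $c$ and some $j\neq i$ with $0\le j<d$. Then $\theta^{i-j}(w)=\phi^c(w)$, so $i-j$ is a non-zero element of $S=\langle d\rangle$ whose least positive representative has absolute value $<d$ — impossible.

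The main obstacle is the bookkeeping of the second paragraph, identifying the shortest block $u$ with the least positive shift $d_0$ (equivalently, computing the block length of each decomposition and confirming the minimum $d_0$ is attained). Once $d=d_0$ is in hand, the conciseness step collapses to the one-line contradiction above, and the refinement step is routine given that the $n$ shifts of $w$ are distinct.
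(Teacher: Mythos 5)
Your strategy is sound and, underneath the repackaging, runs on the same fuel as the paper's proof: Theorem \ref{thm:classificationoforientableredundantpresentations28092021} plus the minimality of $l(u)$ plus a Bezout manipulation exploiting that $\theta$ and $\phi$ commute. The paper works directly: assuming $\theta^s(w)=\phi^r(w)$ with $0<s<(n,h)$, it forms $\theta^{as+bh}(w)=\phi^{(r,l(u))}(w)$, invokes Theorem \ref{thm:classificationoforientableredundantpresentations28092021} and minimality to force $(r,l(u))=l(u)$, and then deduces $s\equiv\lambda h\bmod n$, contradicting $0<s<(n,h)$. You instead organise the shifts carrying $w$ to a cyclic permutation of itself into a subgroup $S=\langle d_0\rangle\leq\Z_n$ and reduce everything to the single identity $(n,h)=d_0$. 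That is a legitimate and arguably cleaner architecture; the refinement half and the final conciseness contradiction are correct as you state them, and your explicit appeal to orientability to exclude the inverse case, and your care that the witness relator survives the one-at-a-time deletions, are both points the paper leaves implicit.

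The one place you are too quick is exactly the step you flag: that the minimum block length is \emph{attained} by a decomposition with $(n,h')=d_0$. Writing $\theta^{d_0}(w)=\phi^{c}(w)$ and applying Theorem \ref{thm:classificationoforientableredundantpresentations28092021} to $t=c$ produces a decomposition of block length $(k,c)$ whose associated shift $h''$ satisfies $(n,h'')=n(k,c)/k$; minimality of $l(u)$ only gives $(n,h'')\geq (n,h)$, which is the wrong direction for what you need. To conclude $(n,h'')=d_0$ you must show $(k,c)=kd_0/n$, and this requires the companion subgroup $T=\{c\ |\ \phi^{c}(w)=\theta^{g}(w)\ \text{for some}\ g\}\leq\Z_k$ together with the fact that $c\mapsto g$ is an isomorphism from $T$ onto $S$ --- injectivity is precisely where the hypothesis that $w$ is not a proper power enters, and it yields $|T|=k/c_0=n/d_0=|S|$ with $c_0=kd_0/n$. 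Since your $c$ is then the preimage of a generator of $S$, one gets $(k,c)=c_0$ and hence $(n,h)=d_0$. So there is no wrong turn, but this step is a genuine computation of the same weight as the paper's Bezout argument, not a one-line citation of Theorem \ref{thm:classificationoforientableredundantpresentations28092021}.
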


\begin{proof}
Observe first that $\theta^h(w)=\phi^{l(u)}(w)$ so $\theta^{(n,h)}(w)$ is a cyclic permutation of $w$ so $P_{n,(n,h)}(w)$ is a refinement of $P_n(w)$. Suppose for contradiction that $P_{n,(n,h )}(w)$ is not concise. Then $\theta^s(w)=\phi^r(w)$ for some $0<s<(n,h)$ , $0\leq r<k$. Let $t=l(u)$ (noting that $t|k$), and let $a,b\in \Z$ satisfy $ar+bt =(r,t)$. Then
\[ \theta^{as+bh}(w)=\theta^{as}(\theta^{bh}(w))=\theta^{as}(\phi^{bt} (w))= \phi^{bt}(\theta^{as}(w))=\phi^{bt}(\phi^{ar}(w))=\phi^{ar+bt}(w)=\phi^{(r,t)}(w).\]
Theorem \ref{thm:classificationoforientableredundantpresentations28092021} then implies $(r,t)\geq t$ so $(r,t)=t$, and hence $r=\lambda t$ for some $\lambda \geq 1$. Therefore $\theta^s(w)=\phi^r(w)=\phi^{\lambda t}(w)=\theta^{\lambda h}(w)$ and hence $s\equiv \lambda h \bmod n$, but $0<s<(n,h)$, a contradiction.
\end{proof}

\subsection{Classification of non-orientable cyclic presentations}\label{sec:classificationnonorientableredundant}

Given any non-empty word $u \in F_n$, where $n\geq 2$ is even, and the word $w=u \theta^{n/2}(u)^{-1}\in F_n$, the cyclic presentation $P_n(w)$ is non-orientable. Lemma~3.6 of \cite{BogleyShift} characterises the non-orientable cyclic presentations $P_n(w)$ as those for which $n$ is even and $w$ is equal to $u\theta^{n/2}(u)^{-1}$ for some reduced word $u$. Unfortunately that statement is not quite correct: a presentation that demonstrates this is  $P_4(w)$ where $w=x_0x_2^{-1}x_3x_1^{-1}$. (However, the remaining results from \cite{BogleyShift} appear to be unaffected.) In Theorem~\ref{thm:classificationofnonorientablepresentations} we instead characterise such presentations as those for which $n$ is even and \em some cyclic permutation of \em $w$ is equal to $u\theta^{n/2}(u)^{-1}$ for some reduced word $u$ and we show that the truncation $P_{n,n/2}(w)$ is a concise refinement of $P_n(w)$.

\begin{theorem}\label{thm:classificationofnonorientablepresentations}
Let $w\in F_n$ be a non-empty cyclically reduced word. The cyclic presentation $P_n(w)$ is non-orientable if and only if $n$ is even and $w$ has a cyclic permutation that is equal to $u\theta^{n/2}(u)^{-1}$ for some reduced word $u\in F_n$, in which case $P_{n,n/2}(w)$ is a concise refinement of $P_n(w)$.
\end{theorem}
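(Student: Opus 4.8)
The plan is to prove the two directions of the equivalence and then the concise refinement claim.

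For the backward direction, suppose $n$ is even and some cyclic permutation $\phi^s(w)$ equals $u\theta^{n/2}(u)^{-1}$ for a reduced word $u$. I would first show that $\phi^s(w)$ is non-orientable, which is immediate: writing $v=\phi^s(w)$, I compute $\theta^{n/2}(v^{-1})$ and check that it is a cyclic permutation of $v$. Indeed, $v=u\,\theta^{n/2}(u)^{-1}$, so $v^{-1}=\theta^{n/2}(u)\,u^{-1}$ and applying $\theta^{n/2}$ (using $\theta^n=1$) gives $\theta^{n/2}(v^{-1})=u\,\theta^{n/2}(u)^{-1}=v$ after noting the cyclic arrangement — more carefully, $\theta^{n/2}(v^{-1})=\theta^{n/2}(\theta^{n/2}(u)u^{-1})=u\,\theta^{n/2}(u)^{-1}=v$, so $v$ is literally equal to a shift of its own inverse, hence a cyclic permutation of a shift of $v^{-1}$. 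Since orientability is a property of $w$ invariant under passing to cyclic permutations (because $\phi$ and $\theta$ commute, and being a cyclic permutation of the inverse of a shift is preserved), $w$ itself is non-orientable.

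For the forward direction, suppose $P_n(w)$ is non-orientable, so $w$ is a cyclic permutation of the inverse of one of its shifts: $\phi^t(w)=\theta^h(w)^{-1}$ for some $t,h$. I would apply $\theta^h$ to both sides and use the resulting relation to deduce that $n$ must be even and that $h\equiv n/2$ in the relevant sense. The key computation is to iterate: from $\phi^t(w)=\theta^h(w^{-1})$, apply the same relation again (using that $\phi,\theta$ commute and $\phi^k(w)=w$) to obtain a relation of the form $\phi^{2t}(w)=\theta^{2h}(w)$, i.e.\ a shift of $w$ equals a cyclic permutation of $w$. By the machinery in Section~\ref{sec:wordsinpresentations}, this forces $\theta^{(n,2h)}(w)$ to be a cyclic permutation of $w$, and the non-orientability constrains $2h\equiv 0\bmod n$ with $h\not\equiv 0$, giving $n$ even and $h\equiv n/2\bmod n$ (after reducing by the root, which I may assume trivial by the reduction in Section~\ref{sec:classificationorientableredundantNEW}). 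Then $\phi^t(w)=\theta^{n/2}(w)^{-1}$, and I would split $w$ at the appropriate position: writing $w$ as a product of its initial segment and terminal segment determined by $t$, the relation $\phi^t(w)=\theta^{n/2}(w^{-1})$ forces $w$ (up to the cyclic permutation that moves the split point to the front) into the shape $u\,\theta^{n/2}(u)^{-1}$, with $u$ the relevant subword.

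For the concise refinement claim, I would show that $P_{n,n/2}(w)$ is a refinement of $P_n(w)$ and that it is concise. That it is a refinement follows because the shifts $\theta^{n/2}(w),\ldots,\theta^{n-1}(w)$ are cyclic permutations of the inverses of $w,\theta(w),\ldots,\theta^{n/2-1}(w)$ respectively (so they are freely redundant relators), using the relation $\theta^{n/2}(w)=\phi^{-t}(w)^{-1}$ together with Lemma~\ref{lem:allcycpermshavethisform} applied to the $u\,\theta^{n/2}(u)^{-1}$ form. For conciseness I would argue by contradiction in the style of Corollary~\ref{cor:conciserefinementorientable}: if $P_{n,n/2}(w)$ were not concise, then among $w,\ldots,\theta^{n/2-1}(w)$ two relators would be cyclic permutations of each other or of each other's inverses; I would show the ``inverse'' case again forces a shift by a multiple of $n/2$ within the range $1,\ldots,n/2-1$ (impossible), and the ``non-inverse'' case contradicts that $w$ is not a proper power via Theorem~\ref{thm:classificationoforientableredundantpresentations28092021}.

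I expect the main obstacle to be the forward direction, specifically extracting the precise value $h\equiv n/2$ and the exact positional split that yields the clean form $u\,\theta^{n/2}(u)^{-1}$ rather than merely a cyclic permutation of it. This is exactly the subtlety the authors flag in the counterexample $P_4(x_0x_2^{-1}x_3x_1^{-1})$: one cannot in general take $u$ to be the literal initial segment of $w$, so the argument must carefully track how the cyclic permutation $\phi^t$ interacts with the half-shift $\theta^{n/2}$, and it is here that allowing ``some cyclic permutation of $w$'' (rather than $w$ itself) to have the stated form becomes essential.
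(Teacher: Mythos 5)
Your backward direction is fine and matches the paper, and your derivation of $2h\equiv 0\bmod n$ by iterating the relation $\phi^t(w)=\theta^h(w^{-1})$ is a clean (arguably slicker) route to $n$ even and $h=n/2$ than the paper's letter-by-letter comparison. But the heart of the forward direction is missing, and the plan you indicate for it would fail. You write that the relation forces $w$, ``up to the cyclic permutation that moves the split point to the front,'' into the shape $u\,\theta^{n/2}(u)^{-1}$. That cyclic permutation is $\phi^t(w)$, and $\phi^t(w)$ does \emph{not} in general have this shape: applying $\phi^t$ to the relation just reproduces a relation of the same type with $t$ replaced by $k-t$. In the paper's own example $w=x_0x_2^{-1}x_3x_1^{-1}$ (with $n=4$, $h=2$, $t=2$) neither $w$ nor $\phi^2(w)=x_3x_1^{-1}x_0x_2^{-1}$ is of the form $u\theta^{2}(u)^{-1}$; only $\phi^{1}(w)=\phi^{t/2}(w)=x_2^{-1}x_3x_1^{-1}x_0$ is. To get there you must first prove that $t$ and $k-t$ are both even --- in the paper this comes from the exponent conditions $\epsilon_j=-\epsilon_{t-1-j}$, which for odd $t$ would force $\epsilon_{(t-1)/2}=0$ --- and then assemble $u$ as $\theta^{n/2}(u_1^{-1})u_2$ from the two blocks $u_1,u_2$ of lengths $t/2$ and $(k-t)/2$, checking that $u\theta^{n/2}(u)^{-1}=\phi^{t/2}(w)$. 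You flag this as ``the main obstacle'' but do not overcome it, and your phrasing suggests the wrong permutation and the wrong subword.

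The conciseness argument is also mischaracterised at its key step. In the ``non-inverse'' case, Theorem~\ref{thm:classificationoforientableredundantpresentations28092021} does not directly contradict the non-proper-power hypothesis; it yields a decomposition $u\theta^{n/2}(u^{-1})=\prod_{i}\theta^{ih'}(v)$ for some new $1\leq h'<n$, and one must then compare $v$ (an initial subword of $u$) with the terminal block of $\theta^{n/2}(u^{-1})$ to deduce $v^{-1}=\theta^{n/2-h'}(v)$, hence $h'=n/2$, hence $u=v=u^{-1}$, which is the actual contradiction. That extra comparison is not routine and is absent from your sketch. So while your overall architecture (two directions plus refinement/conciseness) and several ingredients are right, the decisive combinatorial steps in both the forward direction and the conciseness claim are not supplied, and in the forward direction the step you do propose is incorrect as stated.
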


\begin{proof}
Suppose that $n$ is even and that $w$ has a cyclic permutation that is equal to $u\theta^{n/2}(u)^{-1}$ for some reduced word $u$. Then $w$ is equal to a cyclic permutation of $\theta^{n/2}(w^{-1})$ so $P_n(w)$ is non-orientable.

Conversely, suppose $\theta^h(w)=\phi^t(w^{-1})$ for some $0\leq h<n$ and some $0\leq t<l(w)$. Note that $h\neq 0$ since the only word that is equal to a cyclic permutation of its inverse is the empty word. Let $w=x_{d_0}^{\epsilon_0}x_{d_1}^{\epsilon_1}\ldots x_{d_{k-1}}^{\epsilon_{k-1}}$ where $\epsilon_j\in \{\pm 1\}$ and $0\leq d_j< n$ for each $0\leq j<k$, $k\geq 1$. Then
\[ x_{h+d_0}^{\epsilon_0}x_{h+d_1}^{\epsilon_1}\ldots x_{h+d_{t-1}}^{\epsilon_{t-1}}\cdot x_{h +d_{t}}^{\epsilon_{t}} \ldots x_{h +d_{k-1}}^{\epsilon_{k-1}}=x_{d_{t-1}}^{-\epsilon_{t-1}}x_{d_{t-2}}^{-\epsilon_{t-2}}\ldots x_{d_0}^{-\epsilon_0}\cdot x_{d_{k-1}}^{-\epsilon_{k-1}}x_{d_{k-2}}^{-\epsilon_{k-2}}\ldots x_{d_{t}}^{-\epsilon_{t}}.\]
Hence by comparing exponents and subscripts we have
\begin{alignat}{1}
d_j&\equiv h+ d_{t-1-j}\bmod n, \quad  \epsilon_j = -\epsilon _{t-1-j} \quad\mathrm{for~all}~0\leq j<t,\label{eq:di-e1}\\
d_j&\equiv h+ d_{k+t-1-j}\bmod n, \quad  \epsilon_j = -\epsilon _{k+t-1-j} \quad\mathrm{for~all}~t\leq j<k.\label{eq:di-e2}
\end{alignat}
In particular since  $d_0\equiv (h+ d_{t-1})$ mod $n$ and $d_{t-1} \equiv ( h+ d_{0})$ mod $n$, we have $2h\equiv 0$ mod $n$, and so $n=2h$.
If $t$ is odd then~(\ref{eq:di-e1}) implies that $\epsilon_{(t-1)/2}=0$, a contradiction, thus $t$ is even, $t=2\mu\geq 0$, say. Similarly $k-t$ is even, $k-t=2\nu\geq 0$, say. Therefore, eliminating $t=2\mu$ and $k=2\mu+2\nu$ equations~(\ref{eq:di-e1}),(\ref{eq:di-e2}) become
\begin{alignat*}{1}
d_j &\equiv h+ d_{2\mu-1-j}\bmod n, \quad  \epsilon_j = -\epsilon _{2\mu-1-j} \quad\mathrm{for~all}~0\leq j<2\mu, \\
d_j &\equiv h+ d_{4\mu+2\nu-1-j}\bmod n, \quad  \epsilon_j = -\epsilon _{4\mu+2\nu-1-j} \quad\mathrm{for~all}~~2\mu\leq j<2\mu+2\nu,
\end{alignat*}
respectively. Therefore
\begin{alignat*}{1}
\allowdisplaybreaks
w
&=
\prod_{j=0}^{\mu-1} x_{d_j}^{\epsilon_j} \cdot
\prod_{j=\mu}^{2\mu-1} x_{d_j}^{\epsilon_j} \cdot
\prod_{j=2\mu}^{2\mu+\nu-1} x_{d_j}^{\epsilon_j} \cdot
\prod_{j=2\mu+\nu}^{2\mu+2\nu-1} x_{d_j}^{\epsilon_j}\\\displaybreak
&=
\prod_{j=0}^{\mu-1} x_{d_j}^{\epsilon_j} \cdot
\prod_{j=0}^{\mu-1} x_{h+d_{\mu-j-1}}^{-\epsilon_{\mu-j-1}} \cdot
\prod_{j=2\mu}^{2\mu+\nu-1} x_{d_j}^{\epsilon_j} \cdot
\prod_{j=2\mu}^{2\mu+\nu-1} x_{h+d_{4\mu+\nu-1-j}}^{-\epsilon_{4\mu+\nu-1-j}}\\
&=
\prod_{j=0}^{\mu-1} x_{d_j}^{\epsilon_j} \cdot
\left( \prod_{j=0}^{\mu-1} x_{h+d_j}^{\epsilon_j} \right)^{-1} \cdot
\prod_{j=2\mu}^{2\mu+\nu-1} x_{d_j}^{\epsilon_j} \cdot
\left( \prod_{j=2\mu}^{2\mu+\nu-1} x_{h+d_j}^{\epsilon_j} \right)^{-1} \\
&= u_1\theta^h(u_1)^{-1}\cdot u_2\theta^h(u_2)^{-1}
\end{alignat*}
where $u_1=\prod_{j=0}^{\mu-1} x_{d_j}^{\epsilon_j}$, $u_2=\prod_{j=2\mu}^{2\mu+\nu-1} x_{d_j}^{\epsilon_j}$. Setting $u=\theta^h(u_1^{-1})u_2$, we have $u\theta^h(u)^{-1}$ is a cyclic permutation of $w$, as required.

Thus $P_{n,n/2}(w)$ is a refinement of $P_n(w)$. Suppose for contradiction that $P_{n,n/2}(w)$ is not concise; then $u\theta^{n/2}(u^{-1})$ is a cyclic permutation of $\theta^i(u\theta^{n/2}(u^{-1}))$ or of $\theta^i( \theta^{n/2}(u)u^{-1} )$ for some $1\leq i <n/2$, but in the latter case $2i\equiv 0 \bmod n$ (as above), a contradiction. Therefore Theorem \ref{thm:classificationoforientableredundantpresentations28092021} implies
\[ u\theta^{n/2}(u^{-1}) = \prod_{i=0}^{n/(n,h)-1} \theta^{ih} (v)\]
for some reduced word $v$ and some $1\leq h<n$. Therefore $v$ is an initial subword of $u$ and $\theta^{n-h}(v)$ is a terminal subword of $\theta^{n/2}(u^{-1})$. Hence $\theta^{n-h }(v)=\theta^{n/2}(v^{-1})$ so $v^{-1}=\theta^{n/2-h}(v)$. Let $x_\iota^{\epsilon_{\iota}}$, $x_\tau^{\epsilon_{\tau}}$ ($0\leq \iota,\tau<n$, $\epsilon_\iota, \epsilon_\tau \in \{\pm 1\}$) be the initial and terminal letters of $v$, respectively. Then by comparing initial and terminal letters of $v^{-1}$ and $\theta^{n/2-h}(v)$ we have $x_\tau^{-\epsilon_\tau}=x_{\iota+n/2-h }^{\epsilon_\iota}$ and $x_{\iota}^{-\epsilon_\iota}=x_{\tau+n/2-h}^{\epsilon_\tau}$. In particular $\tau\equiv \iota+n/2-h$ and $\iota\equiv \tau+n/2-h\bmod n$, so $h=n/2$. Therefore $u\theta^{n/2}(u^{-1})=v\theta^{n/2}(v)$, so $u=v$ and $\theta^{n/2}(u^{-1})=\theta^{n/2}(v)$, a contradiction.
\end{proof}

Thus in considering non-orientable cyclic presentations $P_n(w)$, when relators can be replaced their cyclic permutations, it suffices to consider cyclic presentations $P_n(u\theta^{n/2}(u)^{-1})$ for some reduced word $u$ (even though $w$ itself may not be of the form $u\theta^{n/2}(u)^{-1})$. Note that for such a $w$ and $u$, the word $w$ is non-positive, non-negative, and $w$ is cyclically alternating if and only if $w$ is alternating if and only if $u$ is alternating.

\section{The Tits alternative}\label{sec:deficiencylargeness}

The Tits alternative has been considered for various classes of cyclically presented groups \cite{IsherwoodWilliams, ChinyereWilliamsT5, MohamedWilliams2, BW2, EdjvetWilliams, WilliamsLargeness}. In this section we investigate the Tits alternative for the class of groups defined by redundant cyclic presentations. Observe that solvable groups arise within this class:
the group $\Z$ has orientable and non-orientable redundant cyclic presentations $P_2(x_0x_1), P_2(x_0x_1^{-1})$, the group $\Z^2$ has the non-orientable cyclic presentation $P_2(x_0x_1x_0^{-1}x_1^{-1})$, the Baumslag-Solitar group $BS(1,-1)$ (the fundamental group of the Klein bottle) has the orientable redundant cyclic presentation $P_2(x_0^2)$. Groups defined by redundant cyclic presentations have positive deficiency; by \cite{BaumslagPride} a virtually solvable group of positive deficiency has deficiency one; and a group of deficiency one is solvable if and only if it is isomorphic to a group of the form $G_k=\pres{a,b}{bab^{-1}=a^k}$ for some $k\in \Z$ \cite[Theorem 1]{Wilson}. Note that $G_0\cong \Z$, $G_1\cong \Z^2$ and $G_{-1}\cong BS(1,-1)$; we expect that if $k\not\in \{-1,0,1\}$ then $G_k$ does not have a cyclic presentation, but we have been unable to prove that. Note further that $\Z$ and $\Z^2$ are the only abelian one-relator groups \cite[Proposition 5.24, page 108]{LyndonSchupp}, and hence are the only abelian groups defined by redundant cyclic presentations.

\begin{theorem}\label{thm:defatleast2}
Let $n\geq 2$, $w \in F_n$ be a cyclically reduced word and assume that $P_n(w)$ is redundant. If either $n\geq 3$ or $n=2$ and $w$ is a proper power then $G_n(w)$ is large. In particular, if $P_n(w)$ is a redundant $(m,k,\nu)$-special cyclic presentation then $G_n(w)$ is large.
\end{theorem}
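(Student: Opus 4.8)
The plan is to replace $P_n(w)$ by a concise refinement, read off its deficiency, and then split into two regimes: deficiency at least two, disposed of by the theorem of B.~Baumslag and S.\,J.~Pride that a group with a finite presentation of deficiency at least two is large; and deficiency exactly one together with torsion, disposed of by the companion fact that a two-generator one-relator group whose defining relator is a proper power is large. Throughout, the point is that a refinement presents the same group $G_n(w)$, since removing freely redundant relators does not change the group.

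First I would produce the efficient presentation and compute its deficiency. If $P_n(w)$ is orientable and redundant, let $v$ be the root of $w$; by the remarks preceding Theorem~\ref{thm:classificationoforientableredundantpresentations28092021} the refinement index for $w$ agrees with that for $v$, so Corollary~\ref{cor:conciserefinementorientable} applied to $v$ shows $P_{n,(n,h)}(w)$ is a concise refinement of $P_n(w)$ for some $0<h<n$, and then $t:=(n,h)$ is a proper divisor of $n$. If $P_n(w)$ is non-orientable, then $n$ is even and Theorem~\ref{thm:classificationofnonorientablepresentations} gives the concise refinement $P_{n,n/2}(w)$, so $t:=n/2$. In either case $G_n(w)$ has a finite presentation $P_{n,t}(w)$ with $n$ generators and $t<n$ relators, whence $\mathrm{def}(P_{n,t}(w))=n-t\geq 1$.

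Next I would settle $n\geq 3$ by showing $n-t\geq 2$. In the non-orientable case $n$ is even and at least $4$, so $n-n/2=n/2\geq 2$. In the orientable case $t$ is a proper divisor of $n$: for even $n\geq 4$ the largest such divisor is $n/2$, giving $n-t\geq n/2\geq 2$; for odd $n$ the smallest prime factor is at least $3$, so $t\leq n/3$ and $n-t\geq 2n/3\geq 2$; and for $n=3$ the only proper divisor is $t=1$, giving $n-t=2$. Hence $\mathrm{def}(P_{n,t}(w))\geq 2$ and $G_n(w)$ is large by the Baumslag--Pride theorem. This also yields the final assertion: an $(m,k,\nu)$-special presentation has at least three generators, so $n\geq 3$, and a redundant such presentation therefore defines a large group.

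The remaining case, $n=2$ with $w$ a proper power, is the step I expect to be the main obstacle, since here $t=1$ and the deficiency is exactly one, which alone does not force largeness (witness $\Z$, $\Z^2$, and $BS(1,-1)$); the torsion must be used. Writing $w=v^p$ with $v$ the root and $p\geq 2$, the refinement is $P_{2,1}(w)=\langle x_0,x_1\mid v^p\rangle$ in both the orientable and non-orientable cases, a two-generator one-relator group whose relator is a proper power. I would first check that $v$ is cyclically reduced of length at least $2$: a root of length one would force $w=x_i^{\pm p}$, which is positive or negative, and one verifies directly that $P_2(x_i^{\pm p})$ is neither orientable-redundant (its only proper shift is $x_{i+1}^{\pm p}$, not a cyclic permutation of $w$) nor non-orientable (a word of the form $u\theta(u)^{-1}$ is neither positive nor negative), contradicting redundancy. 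Thus $v^p$ is a genuine proper power with $p\geq 2$, and $G_2(w)$ is large by the deficiency-one analogue of Baumslag--Pride for one-relator groups with torsion. As a consistency check, when $l(v)=2$ a Nielsen transformation identifies $G_2(w)$ with the free product $\Z\ast\Z_p$, which is large because it surjects onto $\Z_3\ast\Z_p$, a virtually free group containing a free group of rank two with finite index.
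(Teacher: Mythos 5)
Your proposal is correct and takes essentially the same route as the paper's proof: pass to a concise refinement via Corollary \ref{cor:conciserefinementorientable} and Theorem \ref{thm:classificationofnonorientablepresentations}, observe that its deficiency is at least $2$ when $n\geq 3$ and invoke Baumslag--Pride, and for $n=2$ with $w$ a proper power invoke the deficiency-one largeness result for presentations with a proper-power relator (St\"ohr, Gromov). Your additional bookkeeping (that the number of retained relators is a proper divisor of $n$, and the length check on the root in the $n=2$ case) merely makes explicit what the paper leaves implicit.
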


\begin{proof}
If $n\geq 3$ then Corollary \ref{cor:conciserefinementorientable} and Theorem \ref{thm:classificationofnonorientablepresentations} imply $\mathrm{def}(G)\geq 2$ so $G_n(w)$ is large by \cite{BaumslagPride}. If $n=2$ then $\mathrm{def}(G)\geq 1$, so if in addition $w$ is a proper power then $G_n(w)$ is large by \cite{Stohr}, \cite[page 83]{Gromov}. The final observation follows by noting that every $(m,k,\nu)$-special presentation has at least three generators.
\end{proof}

Therefore, if $P_n(w)$ is a redundant cyclic presentation that does not define a large group then (since there are no redundant cyclic presentations $P_1(w)$) $n=2$ and $w$ is not a proper power. By Theorems \ref{thm:classificationoforientableredundantpresentations28092021} and \ref{thm:classificationofnonorientablepresentations} a redundant cyclic presentation $P_2(w)$ in which $w$ is not a proper power is redundant if and only if $w=u(x_0,x_1)u(x_1,x_0)^\epsilon$, where $\epsilon=1$ in the orientable case and $\epsilon=-1$ in the non-orientable case, and so $G_2(w)$ is the one-relator group $\pres{x_0,x_1}{u(x_0,x_1)u(x_1,x_0)^\epsilon}$. One-relator groups either contain a non-abelian free subgroup or are solvable (\cite[Theorem 3]{KarrassSolitar}, \cite{Cebotar}) so we have the following:

\begin{corollary}[The Tits alternative]\label{cor:redundantTitsalternative}
If $P_n(w)$ is a redundant cyclic presentation then $G_n(w)$ either contains a non-abelian free subgroup or is solvable.
\end{corollary}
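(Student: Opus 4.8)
The plan is to reduce the statement to a dichotomy: a group defined by a redundant cyclic presentation is either already known to be large, in which case it contains a non-abelian free subgroup, or else it is an honest one-relator group on two generators, for which the classical Tits alternative for one-relator groups applies. The whole argument is a case split driven by $n$ and by whether $w$ is a proper power, and almost all of the work has been carried out in the preceding results.

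First I would dispose of the large cases. By Theorem \ref{thm:defatleast2}, if $n\geq 3$, or if $n=2$ and $w$ is a proper power, then $G_n(w)$ is large; since large groups are SQ-universal and in particular contain a non-abelian free subgroup (as recorded in Section \ref{sec:wordsinpresentations}), the conclusion holds in these cases. Because there are no redundant cyclic presentations $P_1(w)$, the only remaining situation is $n=2$ with $w$ not a proper power, and here I must show that $G_2(w)$ is one-relator.

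In that remaining case I would invoke the classification theorems to pin down the exact shape of the relator. For $n=2$ the only proper shift is $\theta(w)$, corresponding to $h=1$, and $\theta$ simply interchanges $x_0$ and $x_1$; here $(n,h)=1$ and $n/(n,h)=2$. In the orientable case Theorem \ref{thm:classificationoforientableredundantpresentations28092021} then gives $w=u\,\theta(u)=u(x_0,x_1)u(x_1,x_0)$, while in the non-orientable case Theorem \ref{thm:classificationofnonorientablepresentations} gives that a cyclic permutation of $w$ equals $u\,\theta(u)^{-1}=u(x_0,x_1)u(x_1,x_0)^{-1}$. Either way, $G_2(w)$ is the one-relator group $\pres{x_0,x_1}{u(x_0,x_1)u(x_1,x_0)^\epsilon}$ with $\epsilon\in\{+1,-1\}$, and since $w$ is non-empty and cyclically reduced this relator is nontrivial.

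Finally, I would conclude by citing the Tits alternative for one-relator groups (\cite[Theorem 3]{KarrassSolitar}, \cite{Cebotar}): every one-relator group either contains a non-abelian free subgroup or is solvable. The only point requiring care—and the step I would check most carefully—is the passage from the abstract classification to the concrete form of the relator when $n=2$, that is, correctly specializing Theorems \ref{thm:classificationoforientableredundantpresentations28092021} and \ref{thm:classificationofnonorientablepresentations} to the two-generator setting. Once the presentation is recognised as one-relator, the result is immediate from the cited dichotomy, so I expect no substantial obstacle beyond this specialization.
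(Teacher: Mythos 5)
Your proposal is correct and follows essentially the same route as the paper: dispose of $n\geq 3$ and the proper-power case via Theorem \ref{thm:defatleast2} and largeness, reduce the rest to $n=2$ with $w$ not a proper power, use Theorems \ref{thm:classificationoforientableredundantpresentations28092021} and \ref{thm:classificationofnonorientablepresentations} to recognise $G_2(w)$ as the one-relator group $\pres{x_0,x_1}{u(x_0,x_1)u(x_1,x_0)^{\epsilon}}$, and conclude by the one-relator Tits alternative of Karrass--Solitar and \v{C}ebotar'. Your explicit care over the fact that in the non-orientable case only a cyclic permutation of $w$ has the form $u\theta(u)^{-1}$ (which is harmless, since replacing a relator by a cyclic permutation does not change the group) is if anything slightly more precise than the paper's phrasing.
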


In the orientable case we can say precisely which presentations define a solvable group.

\begin{corollary}\label{cor:Titsaltorientable}
If $P_n(w)$ is an orientable redundant cyclic presentation then $G_n(w)$ contains a non-abelian free subgroup unless $n=2$ and either $w$ is a cyclic permutation of $x_0^{\epsilon}x_1^{\epsilon}$ ($\epsilon =\pm 1$), in which case $G\cong \Z$, or $w$ is a cyclic permutation of $x_0^{2\epsilon}x_1^{2\epsilon}$ ($\epsilon =\pm 1$), in which case $G\cong BS(1,-1)$.
\end{corollary}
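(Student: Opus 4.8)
The plan is to build on Corollary~\ref{cor:redundantTitsalternative} (the Tits alternative) and the explicit description of two-generator redundant cyclic presentations obtained immediately before it, so that the only task remaining is to identify \emph{precisely} which orientable redundant presentations yield the solvable one-relator groups. By Theorem~\ref{thm:defatleast2}, any orientable redundant $P_n(w)$ with $n\geq 3$ gives a large group, hence one containing a non-abelian free subgroup; likewise, if $n=2$ and $w$ is a proper power then $G_2(w)$ is large. So I would dispose of these cases first by citation of Theorem~\ref{thm:defatleast2} and reduce to the remaining case $n=2$ with $w$ not a proper power.

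In that remaining case, the discussion preceding Corollary~\ref{cor:redundantTitsalternative} shows that in the orientable situation $w$ is a cyclic permutation of $u(x_0,x_1)u(x_1,x_0)$ for some word $u$, so $G_2(w)$ is the one-relator group $\pres{x_0,x_1}{u(x_0,x_1)u(x_1,x_0)}$. By the Tits alternative (Corollary~\ref{cor:redundantTitsalternative}), $G_2(w)$ either contains a non-abelian free subgroup or is solvable, so I only need to determine when it is solvable. Invoking the classification of solvable one-relator groups cited in the paper (they are exactly the groups $G_k=\pres{a,b}{bab^{-1}=a^k}$, and among these only $k\in\{-1,0,1\}$, giving $\mathbb{Z}$, $\mathbb{Z}^2$, and $BS(1,-1)=G_{-1}$, occur with deficiency one), the solvable cases correspond to abelianised or metabelian structure forced by a very short relator. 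The strategy is therefore to show that the relator $u(x_0,x_1)u(x_1,x_0)$ defines a solvable group only when $l(u)=1$, i.e. $u=x_0^{\pm1}$, producing $w$ a cyclic permutation of $x_0^{\epsilon}x_1^{\epsilon}$ (whence $G\cong\mathbb{Z}$), or when $u=x_0^{\pm2}$ (or $u$ has the right length-two positive/negative form after reduction) producing $w$ a cyclic permutation of $x_0^{2\epsilon}x_1^{2\epsilon}$ (whence $G\cong BS(1,-1)$).

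The core computation is to pin down exactly which words $u$ make $\pres{x_0,x_1}{u(x_0,x_1)u(x_1,x_0)}$ isomorphic to one of $\mathbb{Z}$, $\mathbb{Z}^2$, or $BS(1,-1)$. For $\mathbb{Z}$: since $\mathbb{Z}$ is the only one-relator group on two generators that is infinite cyclic, the relator must express one generator as $\pm$ the other up to the abelianisation being $\mathbb{Z}$; one checks directly that $w$ a cyclic permutation of $x_0^{\epsilon}x_1^{\epsilon}$ is the only orientable possibility (the relator $x_0x_1$ and its variants). For $BS(1,-1)$, which has deficiency one and is the Klein bottle group $\pres{a,b}{bab^{-1}a}$, I would identify $x_0^{2\epsilon}x_1^{2\epsilon}$ as defining it by an explicit Tietze transformation (or by recognising $\pres{x_0,x_1}{x_0^2x_1^2}$ as the standard Klein bottle presentation after a change of generators). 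The main obstacle I anticipate is the converse direction: showing these are the \emph{only} solvable cases, i.e. that every other orientable redundant $P_2(w)$ with $w$ not a proper power gives a non-abelian free subgroup. This is where I would lean hardest on the solvable one-relator classification: any solvable quotient must be $G_k$ with $k\in\{-1,0,1\}$, and matching the relator $u(x_0,x_1)u(x_1,x_0)$ against these forces $l(u)\leq 2$ together with the positivity/negativity constraints, after which a short case analysis on $u$ of length $1$ and $2$ (eliminating, for instance, $u=x_0^{\epsilon}x_1^{\delta}$ with $\delta\neq\epsilon$ or mixed-sign length-two words, which either fail to be solvable or fail orientability/reducedness) yields exactly the two listed families.
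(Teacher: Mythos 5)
Your reduction matches the paper's: Theorem~\ref{thm:defatleast2} disposes of $n\geq 3$ and of $n=2$ with $w$ a proper power, and the discussion before Corollary~\ref{cor:redundantTitsalternative} identifies the remaining groups as $\pres{x_0,x_1}{u(x_0,x_1)u(x_1,x_0)}$. Your identification of the two exceptional families and the isomorphisms with $\Z$ and $BS(1,-1)$ is also correct. The gap is in the converse direction, which is the entire substance of the corollary: you propose to show that every other such one-relator group contains a non-abelian free subgroup by invoking Wilson's classification of solvable deficiency-one groups as the groups $G_k=\pres{a,b}{bab^{-1}=a^k}$ and then ``matching the relator $u(x_0,x_1)u(x_1,x_0)$ against these'' to force $l(u)\leq 2$. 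No such matching argument is available: an abstract isomorphism $G_2(w)\cong G_k$ need not carry $\{x_0,x_1\}$ to $\{a,b\}$ or respect the relator up to conjugacy and inversion, so nothing about the form of $bab^{-1}a^{-k}$ constrains the length of $u$. (The abelianization only pins down $|k-1|$, not $l(u)$.) Your parenthetical claim that among the $G_k$ ``only $k\in\{-1,0,1\}$ occur with deficiency one'' is also false --- every $G_k$ has deficiency one --- and the question of whether $G_k$ for $k\notin\{-1,0,1\}$ admits a cyclic presentation is exactly the point the authors state they were \emph{unable} to prove in Section~\ref{sec:deficiencylargeness}. So your route runs head-on into an open problem the paper deliberately circumvents.

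The paper's actual argument works directly with the syllable decomposition of $u$ and never needs to recognise the solvable groups among the $G_k$. If $u$ involves only one generator, or has exactly two syllables, then $w$ is a cyclic permutation of $x_0^px_1^p$, and $\pres{x_0,x_1}{x_0^px_1^p}$ maps onto $\Z_{|p|}*\Z_{|p|}$, which is large for $|p|\geq 3$; the cases $|p|=1,2$ give exactly $\Z$ and $BS(1,-1)$. If $u$ has at least three syllables, the paper passes to the shift extension $E=\pres{x,t}{t^2,(U(x,t)t)^2}$, which is a generalized triangle group, and applies the Fine--Roehl--Rosenberger result (\cite[Theorem 8]{FRRsurvey}) to produce a non-abelian free subgroup of $E$, hence of $G$. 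This use of the shift extension together with the Tits alternative for generalized triangle groups is the key idea missing from your proposal; without it (or some substitute of comparable strength) the ``only these cases are solvable'' direction does not go through.
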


\begin{proof}
As explained above, if $G$ does not contain a non-abelian free subgroup then $G\cong G_2(w)$ where $w$ is cyclically reduced and is a cyclic permutation of $u(x_0,x_1)u(x_1,x_0)$ for some reduced word $u(x_0,x_1)$.

If $u(x_0,x_1)$ involves exactly one of $x_0,x_1$ then $w$ is a cyclic permutation of $x_0^px_1^p$ for some $p\in \Z\backslash \{0\}$, so $G\cong \pres{x_0,x_1}{x_0^px_1^p}$ which is large if $|p|\geq 3$ (as it maps onto $\Z_p*\Z_p$), isomorphic to $BS(1,-1)$ if $p=\pm 2$, and isomorphic to $\Z$ if $p=\pm 1$. Thus we may assume that $u(x_0,x_1)$ involves both $x_0$ and $x_1$.

Without loss of generality we may write either (i) $u(x_0,x_1)=x_0^{k_1}x_1^{k_2}x_0^{k_3}\ldots x_1^{k_s}$, where $s\geq 2$ is even or (ii) $u(x_0,x_1)=x_0^{k_1}x_1^{k_2}x_0^{k_3}\ldots x_0^{k_s}$ where $s\geq 3$ is odd and (in each case) each $k_i\in \Z\backslash \{0\}$. If $s=2$ then $w$ is a cyclic permutation of $x_0^{k_1+k_2}x_1^{k_1+k_2}$ so $G\cong \pres{x_0,x_1}{x_0^{k_1+k_2}x_1^{k_1+k_2}}$, which maps onto $G_2(x_0^{k_1+k_2})\cong \Z_{|k_1+k_2|}*\Z_{|k_1+k_2|}$, which is large if $|k_1+k_2|\not \in \{1,2\}$, and $G$ is isomorphic to $\Z$ if $k_1+k_2=\pm 1$ and to $BS(1,-1)$ if $k_1+k_2=\pm 2$. Thus we may assume $s\geq 3$.

The shift extension of $G$ is of the form $E=\pres{x,t}{t^2, U(x,t)U(txt,t)}=\pres{x,t}{t^2, (U(x,t)t)^2}$ where
\[U(x,t) = \begin{cases}
  \left( \prod_{i=0}^{(s-3)/2} x^{k_{2i+1}}tx^{k_{2i+2}}t\right) x^{k_s} & \mathrm{if}~s~\mathrm{is~odd};\\
  \prod_{i=0}^{s/2-1} x^{k_{2i+1}}tx^{k_{2i+2}}t & \mathrm{if}~s~\mathrm{is~even}
\end{cases}\]
and hence $U(x,t)t$ is a cyclic permutation of $\prod_{i=1}^s x^{k_i}t$, if $s$ is odd, and to $x^{k_s+k_1}t\cdot \prod_{i=2}^{s-1} x^{k_i}t$, if $s$ is even. Note that $k_s+k_1\neq 0$, since $w$ is cyclically reduced. By \cite[Theorem 8]{FRRsurvey} or \cite[Lemma 7.3.3.1]{FRbook} $E$ contains a non-abelian free subgroup, and hence so does $G$, as required.
\end{proof}

To consider the remaining non-orientable cases we must consider the groups $G_2(u(x_0,x_1)u(x_1,x_0)^{-1})$.

\begin{lemma}
Let $u(x_0,x_1)$ be a reduced word and let $G=\pres{x_0,x_1}{u(x_0,x_1)u(x_1,x_0)^{-1}}$ where $u(x_0,x_1)=v(x_0,x_1)^t$ for some $t\geq 2$. Then $G$ contains a non-abelian free subgroup except if $t=2$ and either $v(x_0,x_1)=x_0^{\pm 1}$ or $x_1^{\pm 1}$, in which case $G$ is isomorphic to $BS(1,-1)$.
\end{lemma}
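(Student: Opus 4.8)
The first thing I would record is that in $F_2$ the shift $\theta$ is the transposition $x_0\leftrightarrow x_1$, so $u(x_1,x_0)=\theta(u)$ and the relator is $w=v^t\theta(v)^{-t}=u\,\theta(u)^{-1}$; this is exactly the non-orientable form of Theorem~\ref{thm:classificationofnonorientablepresentations}, so $P_2(w)$ is redundant and I may freely cyclically reduce $w$ without changing $G$. I would then split the argument according to whether $v$ involves one or both of the generators, since the two cases behave quite differently.

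Suppose first that $v$ is a power of a single generator, say $v=x_0^{p}$ with $p\neq 0$ (the case $v=x_1^{p}$ is symmetric). Then $\theta(v)=x_1^{p}$ and $G\cong\pres{x_0,x_1}{x_0^{pt}x_1^{-pt}}$, which is the amalgamated free product $\Z *_{\Z}\Z$ of $\langle x_0\rangle$ and $\langle x_1\rangle$ along $\langle x_0^{pt}\rangle=\langle x_1^{pt}\rangle$, a subgroup of index $|pt|$ in each factor. By the standard theory of amalgams this contains a non-abelian free subgroup unless both indices equal $2$, that is unless $|pt|=2$, in which case the amalgam is the Klein bottle group $\cong BS(1,-1)$. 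Since $t\geq 2$, the equality $|pt|=2$ forces $t=2$ and $p=\pm1$, i.e.\ $v=x_0^{\pm1}$; this is precisely the stated exceptional case.

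It remains to treat the case in which $v$ involves both $x_0$ and $x_1$, where I claim $G$ always contains a non-abelian free subgroup. If $w$ happens to be a proper power then $G$ is large by Theorem~\ref{thm:defatleast2}, so I may assume $w$ is not a proper power; by Corollary~\ref{cor:redundantTitsalternative} it then suffices to show that $G$ is not solvable, for a solvable group here would, having positive deficiency, have deficiency one and hence be some Baumslag--Solitar group $BS(1,k)$ (as recalled in Section~\ref{sec:deficiencylargeness}). To detect this I would pass to the infinite cyclic cover: the map $\phi\colon G\to\Z$ with $x_0,x_1\mapsto 1$ is well defined, since $\theta$ preserves exponent sums and so $\phi(w)=t\phi(v)-t\phi(\theta(v))=0$. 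Changing generators to $x=x_0$ and $s=x_0x_1^{-1}$ and running the Reidemeister--Schreier process over the transversal $\{x^i\}$ presents $\ker\phi$ on the generators $s_i=x^{i}sx^{-i}$ subject to the $\Z$-translates of the rewritten relator $w_*$. Because $v$ involves both generators, $w_*$ genuinely involves the $s_i$ across at least two consecutive indices, and when the extreme-index generators occur to exponent $\pm1$ this exhibits $\ker\phi$ as a free group of rank at least $2$, so $F_2\leq\ker\phi\leq G$; the width-one configurations are exactly the metabelian groups $\Z^2$ and $BS(1,k)$ excluded above.

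The step I expect to be the main obstacle is precisely this last one: verifying, uniformly over all $v$ that involve both generators and all $t\geq2$, that the index span of $w_*$ in the cyclic cover is at least $2$ (with the extreme generators occurring simply, so that the kernel is genuinely free rather than merely a one-relator HNN extension over a Magnus subgroup). When $\sigma(v)\neq0$ this is immediate, because traversing $v^{t}$ drives the prefix exponent sums at the $x_1$-positions apart by $|t\,\sigma(v)|\geq 2$. The delicate situation is $\sigma(v)=0$, where I must track the internal oscillation of the prefix sums within $v$ and within $\theta(v)$, confirm that the extreme indices are attained at a single $x_1$-occurrence, and rule out the degenerate span-one cases (which, consistently with the statement, would return the metabelian groups). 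I expect this to be careful bookkeeping rather than a conceptual difficulty, and it constitutes the technical heart of the proof.
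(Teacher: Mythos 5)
Your first case ($v$ a power of a single generator) is correct: the identification of $\pres{x_0,x_1}{x_0^{pt}x_1^{-pt}}$ with the amalgam $\Z *_{pt\Z}\Z$ does yield a non-abelian free subgroup unless $|pt|=2$, which recovers exactly the stated exception. The reduction of the remaining case to ``show $G$ is not solvable'', via Corollary~\ref{cor:redundantTitsalternative} and Wilson's classification of solvable deficiency-one groups, is also legitimate. The problem is that the step you yourself flag as ``the technical heart'' is precisely the step that is missing, and it is not mere bookkeeping. Your plan is to show that $\ker(\phi\colon G\to\Z)$ is free of rank at least $2$ by Reidemeister--Schreier, using the criterion that the extreme-index generators $s_m,s_M$ occur in the rewritten relator with exponent $\pm 1$. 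For general $v$ involving both generators there is no reason the extreme indices are attained at a single occurrence with exponent $\pm 1$; when they are not, the kernel is a direct limit of one-relator amalgams over Magnus subgroups and is typically \emph{not} free, so the implicit trichotomy ``span one (metabelian) or span at least two with simple extremes (free kernel)'' omits a genuine third case, in which you would still have to prove non-solvability by other means --- essentially the original problem. Even your ``immediate'' subcase $\sigma(v)\neq 0$ only gives span at least $2$, not simplicity of the extreme occurrences. So there is a genuine gap in the main case.

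For comparison, the paper avoids the cyclic cover entirely. It first disposes of $t>2$ for all $v$ at once: adding the relator $v(x_1,x_0)^t$ shows $G$ surjects onto $\pres{x_0,x_1}{v(x_0,x_1)^t,\,v(x_1,x_0)^t}=G_2(v(x_0,x_1)^t)$, which is large for $t>2$ by Edjvet's result on balanced presentations with proper-power relators \cite{EdjvetBalanced}. For $t=2$ with $v$ involving both generators, it passes to the shift extension of $G_2(v(x_0,x_1)^2)$, which has a presentation $\pres{x,t}{t^2, V(x,t)^2}$ of generalized triangle group type with $V(x,t)$ of syllable length at least $2$; the results of Fine--Roehl--Rosenberger \cite{FRRsurvey} then give a non-abelian free subgroup of the extension, hence of $G_2(v(x_0,x_1)^2)$, hence of $G$. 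If you wish to salvage your route you should either replace the cyclic-cover step by a quotient argument of this kind, or supply a complete proof of the kernel claim --- which I do not believe holds in the generality you need.
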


\begin{proof}
We may assume that $v$ is the root of $u$. The group $G$ maps onto $\pres{x_0,x_1}{v(x_0,x_1)^t,v(x_1,x_0)^t}$, which is large if $t>2$ by~\cite{EdjvetBalanced}, thus we may assume $t=2$. If $v(x_0,x_1)$ involves exactly one of $x_0,x_1$ then $G\cong \pres{x_0,x_1}{x_0^2x_1^{-2}}\cong BS(1,-1)$. Suppose then that $v(x_0,x_1)$ involves both $x_0$ and $x_1$. Without loss of generality we may write either (i) $v(x_0,x_1)=x_0^{k_1}x_1^{k_2}x_0^{k_3}\ldots x_1^{k_s}$, where $s\geq 2$ is even or (ii) $v(x_0,x_1)=x_0^{k_1}x_1^{k_2}x_0^{k_3}\ldots x_0^{k_s}$ where $s\geq 3$ is odd and (in each case) each $k_i\in \Z\backslash \{0\}$.

The group $G$ maps onto $H=\pres{x_0,x_1}{v(x_0,x_1)^2,v(x_1,x_0)^2}=G_2(v(x_0,x_1)^2)$. Then the shift extension of $H$ is of the form $E=\pres{x,t}{t^2, V(x,t)^2}$, where $V(x,t)=(x^{k_1}t)\ldots (x^{k_s}t)$ in case (i) (so $s\geq 2$) and $V(x,t)=(x^{k_s+k_1}t)\ldots (x^{k_{s-1}}t)$ in case (ii) (so $s-1\geq 2$). By \cite[Theorem 8]{FRRsurvey} or \cite[Lemma 7.3.3.1]{FRbook} $E$ contains a non-abelian free subgroup, and hence so does $H$, and hence $G$.
\end{proof}

Thus it remains to consider groups of the form $G=\pres{x_0,x_1}{u(x_0,x_1)u(x_1,x_0)^{-1}}$ where $u(x_0,x_1)$ is not a proper power. If $u(x_0,x_1)=x_0^{\pm 1}$ or $x_1^{\pm 1}$ then $G\cong \Z$; if $u(x_0,x_1)=(x_0x_1^{\pm 1})^{\pm 1}$ or $(x_1x_0^{\pm 1})^{\pm 1}$ then $G\cong \Z^2$. We expect that in all other cases $G$ contains a non-abelian free subgroup. As examples in this direction, (1) it follows from \cite[Corollary 3.4]{ButtonLargenessLERF} that if $G\not \cong \Z^2$ and $G^\mathrm{ab}\cong \Z^2$ then $G$ contains a non-abelian free subgroup and (2) that if $u(x_0,x_1)=x_0x_1x_0\ldots x_1x_0$ is of odd length $k\geq 3$ then the shift extension of $G$ is the group $E=\pres{x,t}{t^2, (xt)^k(x^{-1}t^{-1})^k}\cong \pres{y,t}{t^2, y^kty^{-k}t}$, which maps onto $\pres{y,t}{t^2,y^k}\cong \Z_2*\Z_k$, so is large. In summary, we pose:

\begin{conjecture}\label{conj:Titsalternative}
If $P_2(w)$ is a  non-orientable cyclic presentation then $G_2(w)$ either contains a non-abelian free subgroup or is isomorphic to $\Z,\Z^2$ or $BS(1,-1)$.
\end{conjecture}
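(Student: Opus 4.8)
The plan is to combine the Tits alternative already established in Corollary~\ref{cor:redundantTitsalternative} with a classification of the solvable groups that can occur. By the reductions preceding the statement we may assume that $G=G_2(w)=\pres{x_0,x_1}{u(x_0,x_1)u(x_1,x_0)^{-1}}$ with $u$ not a proper power (the proper-power case is the content of the preceding Lemma), and the words $u$ of length at most two produce $\Z$ or $\Z^2$. By Corollary~\ref{cor:redundantTitsalternative}, $G$ either contains a non-abelian free subgroup, in which case we are done, or is solvable, so it suffices to treat the solvable case. Since $G$ is a one-relator group it has deficiency one, so by \cite{BaumslagPride} and \cite[Theorem~1]{Wilson} (as recalled in Section~\ref{sec:deficiencylargeness}) a solvable such $G$ is isomorphic to $G_k=\pres{a,b}{bab^{-1}=a^k}\cong BS(1,k)$ for some $k\in\Z$. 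As $G_0\cong\Z$, $G_1\cong\Z^2$ and $G_{-1}\cong BS(1,-1)$, the conjecture is equivalent to the assertion that no such $G$ is isomorphic to $BS(1,k)$ with $|k|\geq 2$.

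To detect $k$ I would compute the Alexander polynomial of $G$ with respect to the canonical character $\chi\colon G\to\Z$, $x_0,x_1\mapsto t$, namely the projection of $G^{\mathrm{ab}}$ onto its free part. Writing $\bar u=u(x_1,x_0)$ and using that $\chi$ is invariant under the shift $x_0\leftrightarrow x_1$, the Fox Jacobian of the relator $u\bar u^{-1}$ reduces to $(A-B,\,B-A)$, where $A=\chi(\partial u/\partial x_0)$ and $B=\chi(\partial u/\partial x_1)$; hence the Alexander module is $\Z[t^{\pm1}]/(A-B)$ and the Alexander polynomial is $A-B$, well defined up to a unit $\pm t^m$. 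A direct computation for $BS(1,k)$ gives Alexander polynomial an associate of $t-k$, so $G\cong BS(1,k)$ forces $A-B$ to be an associate of $t-k$ (up to the reciprocal ambiguity coming from the sign of $\chi$). Combined with the fundamental Fox identity $(A+B)(t-1)=t^{N}-1$, where $N$ is the exponent sum of $u$, so that $A+B=(t^{N}-1)/(t-1)$, the integrality of $A$ and $B$ imposes congruence constraints modulo $2$ that severely restrict the shape and length of the words $u$ for which $A-B$ can even be an associate of a degree-one polynomial $t-k$.

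The main obstacle is that the Alexander polynomial is not a complete invariant here: it determines only the metabelianisation $G'/G''$, and one can exhibit words $u$ (for instance $u=x_0^{-1}x_1x_0$, for which $A-B$ is an associate of $t-2$) where $A-B$ is an associate of $t-k$ with $|k|\geq 2$ even though $G$ is expected to be non-solvable and to contain a non-abelian free subgroup. Thus matching the Alexander polynomial of $BS(1,k)$ is necessary but far from sufficient, and the heart of a proof must show directly that, outside the listed degenerate words, $G$ is not metabelian, equivalently that $G''\neq 1$; by the Tits alternative this would then yield the required free subgroup. The route I would pursue is to analyse $\ker\chi$, and with it $G'$, through the one-relator structure of $G$: when the exponent-sum difference $\delta=l_{x_0}(u)-l_{x_1}(u)$ vanishes, both generators have zero exponent sum in the relator and Magnus' rewriting realises $G$ as an HNN extension over a free base, while for $\delta\neq 0$ one passes to the infinite cyclic cover determined by $\chi$; in either case the aim is to locate a non-abelian free subgroup inside the kernel whenever $u$ is not a degenerate word. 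Carrying this structural analysis through uniformly in $u$, rather than verifying it case by case, is precisely the difficulty that, as the authors note in Section~\ref{sec:deficiencylargeness}, has so far prevented a proof, and is where I expect essentially all of the work to lie.
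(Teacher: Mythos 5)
The statement you were asked to prove is posed in the paper as an open \emph{conjecture}: the authors give no proof, only the surrounding reductions (the case where $u$ is a proper power is handled by the preceding lemma, the degenerate short words are listed, and two families of supporting examples are given via \cite{ButtonLargenessLERF} and via the shift extension). Your proposal reproduces exactly those reductions --- passing to the one-relator group $\pres{x_0,x_1}{u(x_0,x_1)u(x_1,x_0)^{-1}}$, invoking Corollary~\ref{cor:redundantTitsalternative} to reduce to the solvable case, and using \cite{BaumslagPride} together with \cite[Theorem~1]{Wilson} to reduce the conjecture to the assertion that no $BS(1,k)$ with $|k|\geq 2$ admits such a presentation --- and this reformulation is faithful to what the authors themselves say in Section~\ref{sec:deficiencylargeness}. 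The Alexander-polynomial computation you add ($A-B$ as the order of the Alexander module, constrained by $(A+B)(t-1)=t^N-1$) is a genuine observation not present in the paper, and your example $u=x_0^{-1}x_1x_0$ correctly shows why it cannot suffice: it only controls $G'/G''$, so it can at best rule out some words $u$, not establish the conjecture.

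The gap is therefore the entire core of the argument, as you yourself acknowledge in your final paragraph: you have no mechanism for showing that, outside the listed degenerate words, $G''\neq 1$ (equivalently, that $G$ is not isomorphic to any $BS(1,k)$), and the proposed Magnus/HNN analysis of $\ker\chi$ is stated as a programme rather than carried out. Since the paper leaves this as Conjecture~\ref{conj:Titsalternative}, there is no ``paper's own proof'' to compare against, and your proposal should not be read as a proof either. One small inaccuracy worth fixing in your setup: it is not true that every $u$ of length at most two yields $\Z$ or $\Z^2$; for instance $u=x_0x_1^{-1}$ gives $w=(x_0x_1^{-1})^2$ and $G\cong\Z_2*\Z$, which is consistent with the conjecture because it contains a non-abelian free subgroup, but it is not one of $\Z$, $\Z^2$, $BS(1,-1)$ (and note that here $w$, though not $u$, is a proper power, so this case is already covered by Theorem~\ref{thm:defatleast2} via largeness of $G_2$ of a proper power --- but it should not be filed under the abelian degenerate cases).
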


\section{Star graphs of cyclic presentations}\label{sec:stargraphs}

In \cite[Theorem 3.3]{ChinyereWilliamsGP} the star graphs of concise cyclic presentations were described. In this section we generalize that result to describe the star graphs of (possibly redundant) cyclic presentations.

\subsection{Star graphs of orientable cyclic presentations}\label{sec:stargraphorientableredundant}

As in \cite{ChinyereWilliamsGP}, it is convenient to express results concerning star graphs in terms of multisets of differences of subscripts in length 2 subwords of a particular subword of $w$ (in the redundant case) or of length 2 cyclic subwords of $w$ (in the concise case).

\begin{defn}\label{def:ABQorientable}
Suppose $P_n(w)$ is orientable and let $u$ be the shortest  subword of $w$ such that $w=\prod_{i=0}^{n/(n,h)-1} \theta^{ih}(u)$ for any $0\leq h<n$. Let $\mathcal{A},\mathcal{B},\mathcal{Q},\mathcal{Q}^+,\mathcal{Q}^-$ be the multisets defined as follows:
\begin{alignat*}{1}
  \mathcal{A} &= \{ a\ |\ \mathrm{there~is~a~subword}~x_ix_{i+a}^{-1}~\mathrm{of}~u\theta^{h}(\iota(u)),~\mathrm{counting~multiplicities}\},\\
  \mathcal{B} &= \{ b\ |\ \mathrm{there~is~a~subword}~x_i^{-1}x_{i+b}~\mathrm{of}~u\theta^{h}(\iota(u)),~\mathrm{counting~multiplicities}\},\\
  \mathcal{Q} &= \{ q\ |\ \mathrm{there~is~a~subword}~x_ix_{i+q}~\mathrm{or}~x_{i+q}^{-1}x_{i}^{-1}~\mathrm{of}~u\theta^{h}(\iota(u)),~\mathrm{counting~multiplicities}\},\\
  \mathcal{Q}^+ &= \{ q\ |\ \mathrm{there~is~a~subword}~x_ix_{i+q}~\mathrm{of}~u\theta^{h}(\iota(u)),~\mathrm{counting~multiplicities}\},\\
  \mathcal{Q}^- &= \{ q\ |\ \mathrm{there~is~a~subword}~x_{i+q}^{-1}x_{i}^{-1}~\mathrm{of}~u\theta^{h}(\iota(u)),~\mathrm{counting~multiplicities}\}.
\end{alignat*}
\end{defn}

Note that $\mathcal{Q}=\emptyset$ if and only if $u$ is cyclically alternating and observe that the following congruence holds (compare \cite[(2)]{ChinyereWilliamsGP}):
\begin{alignat}{1}
\sum_{a\in \mathcal{A}} a + \sum_{b\in \mathcal{B}} b + \sum_{q\in \mathcal{Q^+}} q - \sum_{q\in \mathcal{Q^-}} q \equiv h\bmod n\label{eq:A+B+Q+-Q-}
\end{alignat}
and that if $P_n(w)$ is concise then the sets $\mathcal{A},\mathcal{B},\mathcal{Q},\mathcal{Q}^+,\mathcal{Q}^-$ are precisely the sets defined in \cite{ChinyereWilliamsGP}.

Theorem~\ref{thm:stargraphorientable} describes the star graph of a (redundant or concise) orientable cyclic presentation $P_n(w)$. Recall that the star graph of $P_n(w)$ is equal to the star graph of $P_n(v)$ where $v$ is the root of $w$, so we may assume that $w$ is not a proper power. The reader may find the following presentations helpful as illustrative examples: $P_6(x_0x_2^{-1}x_1^{-1}x_3x_5^{-1}x_4^{-1})$, $P_6(x_0x_3^{-1}x_1x_2^{-1}x_3x_0^{-1}x_4x_5^{-1})$.

\begin{theorem}\label{thm:stargraphorientable}
Suppose $P_n(w)$ is orientable where $w$ is not a proper power and let $u$ be the shortest subword of $w$ such that $w=\prod_{i=0}^{n/(n,h)-1} \theta^{i h}(u)$ for any $0\leq h<n$, and let $\Gamma$ be the star graph of $P_n(w)$. Let $d_\mathcal{A}=\mathrm{gcd}(n,a\ (a\in \mathcal{A}))$, $d_\mathcal{B}=\mathrm{gcd}(n,b\ (b\in \mathcal{B}))$, and if $u$ is not cyclically alternating let $q_0\in \mathcal{Q}$ and set $d=\mathrm{gcd}(n, a\ (a\in \mathcal{A}), b\ (b\in \mathcal{B}),q-q_0\ (q\in \mathcal{Q}))$. Then $\Gamma$ is $l(u)$-regular and has vertices $x_i,x_i^{-1}$ ($0\leq i<n$) and edges $x_i-x_{i+a}$, $x_i^{-1}-x_{i+b}^{-1}$, $x_i-x_{i+q}^{-1}$ for all $a\in \mathcal{A}, b\in \mathcal{B}, q\in\mathcal{Q}$, $0\leq i<n$.

\begin{itemize}
  \item[(a)] If $u$ is not cyclically alternating then $\Gamma$ has $d$ connected components $\Gamma_0,\ldots, \Gamma_{d-1}$ where for $0\leq j<d$ the graph $\Gamma_j$ is the induced subgraph of $\Gamma$ with vertex sets $V(\Gamma_j)=V(\Gamma_j^+)\cup V(\Gamma_j^-)$ where $\Gamma_j^+$ and $\Gamma_j^-$ are the induced subgraphs of $\Gamma$ with vertex sets
      \begin{alignat*}{1}
        V(\Gamma_j^+) &= \{ x_{j+td}\ |\ 0\leq t<n/d\},\\
        V(\Gamma_j^-) &= \{ x_{j+td+q_0}^{-1}\ |\ 0\leq t<n/d\}
      \end{alignat*}
      (subscripts mod~$n$). In particular, $|V(\Gamma_j^+)|=|V(\Gamma_j^-)|=n/d$ for all $0\leq j<d$ and the subscripts of the positive (resp.\,negative) vertices in any component are congruent mod $d$.

  \item[(b)] If $u$ is cyclically alternating then $\Gamma$ has $d_\mathcal{A}+d_\mathcal{B}$ connected components $\Gamma_0^+,\ldots, \Gamma_{d_\mathcal{A}-1}^+$, $\Gamma_0^-,\ldots, \Gamma_{d_\mathcal{B}-1}^-$,  which are, respectively, the induced labelled subgraphs of $\Gamma$ with vertex sets
      \begin{alignat*}{1}
        V(\Gamma_j^+) &= \{ x_{j+td_\mathcal{A}}\ |\ 0\leq t<n/d_\mathcal{A}\},\\
        V(\Gamma_j^-) &= \{ x_{j+td_\mathcal{B}}^{-1}\ |\ 0\leq t<n/d_\mathcal{B}\}
      \end{alignat*}
      (subscripts mod~$n$). Moreover, each component $\Gamma_j^+$ is isomorphic to the circulant graph \linebreak $\mathrm{circ}_{n/d_\mathcal{A}}(\{ a/d_\mathcal{A}\ (a\in\mathcal{A})\})$ and each component $\Gamma_j^-$ is isomorphic to circulant graph \linebreak $\mathrm{circ}_{n/d_\mathcal{B}}(\{ b/d_\mathcal{B}\ (b\in\mathcal{B})\})$.
\end{itemize}
\end{theorem}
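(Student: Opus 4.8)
The plan is to reduce everything to an explicit description of the length-$2$ cyclic subwords of the relators, since these are exactly what the edges of the star graph record. Because removing freely redundant relators leaves the star graph unchanged, I may replace $P_n(w)$ by the concise refinement $P_{n,(n,h)}(w)$ of Corollary~\ref{cor:conciserefinementorientable}, whose relators are $\theta^j(w)$ for $0\le j<(n,h)$. Writing $w=\prod_{i=0}^{\lambda-1}\theta^{ih}(u)$ with $\lambda=n/(n,h)$, the length-$2$ cyclic subwords of $w$ split into the subwords internal to a block $\theta^{ih}(u)$ and the $\lambda$ subwords $\theta^{ih}(\tau(u)\theta^{h}(\iota(u)))$ straddling consecutive blocks (the case $i=\lambda-1$ being the wrap-around, using $\lambda h\equiv 0 \bmod n$); together these are precisely the shifts by multiples of $h$ of the length-$2$ subwords of $u\theta^{h}(\iota(u))$. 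First I would check that as $j$ runs over $0\le j<(n,h)$ and $i$ over $0\le i<\lambda$ the residues $j+ih$ exhaust $\Z_n$ exactly once (using that $h$ has order $\lambda$ in $\Z_n$), so that the length-$2$ cyclic subwords of the concise relators are exactly the words $\theta^{m}(s)$ for $0\le m<n$ and $s$ a length-$2$ subword of $u\theta^{h}(\iota(u))$, each occurring once. Reading off the edge associated to $\theta^m(s)$ for $s$ of each of the four types $x_ix_{i+a}^{-1}$, $x_i^{-1}x_{i+b}$, $x_ix_{i+q}$, $x_{i+q}^{-1}x_i^{-1}$ then yields exactly the claimed edge lists indexed by $\mathcal{A},\mathcal{B},\mathcal{Q}$ (with the correct multiplicities, since these are multisets).

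For regularity I would count, for a fixed vertex, the contribution of each subword $s=x_d^{\epsilon}x_e^{\delta}$ of $u\theta^h(\iota(u))$ to its degree, summed over the $n$ shifts. The edge from $\theta^m(s)$ has endpoints $x_{d+m}^{\epsilon}$ and $x_{e+m}^{-\delta}$, so exactly one shift $m$ makes each relevant endpoint equal to a prescribed vertex. Hence $s$ contributes $1$ to $\deg(x_i)$ precisely when its first letter is positive or its second letter is negative, and $1$ to $\deg(x_i^{-1})$ precisely when its first letter is negative or its second letter is positive, independently of $i$. Summing over the subwords $c_0c_1,\dots,c_{l(u)-1}c_{l(u)}$ of $u\theta^h(\iota(u))=c_0\cdots c_{l(u)}$, and using that $c_{l(u)}=\theta^h(\iota(u))$ has the same sign as $c_0$, both counts collapse to (number of positive letters of $u$)$+$(number of negative letters of $u$)$=l(u)$. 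Hence $\Gamma$ is $l(u)$-regular.

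For the components I would track how edges move subscripts: an $\mathcal{A}$-edge changes a positive subscript by some $a\in\mathcal{A}$, a $\mathcal{B}$-edge changes a negative subscript by some $b\in\mathcal{B}$, and a $\mathcal{Q}$-edge joins $x_i$ to $x_{i+q}^{-1}$. In case (b), where $u$ is cyclically alternating and $\mathcal{Q}=\emptyset$, the positive and negative vertices lie in distinct components; the positive vertices reachable from $x_j$ are those with subscript in $j+\langle a : a\in\mathcal{A}\rangle=j+\langle d_\mathcal{A}\rangle$, giving $d_\mathcal{A}$ components $\Gamma_j^+$ with the stated vertex sets, and the relabelling $x_{j+td_\mathcal{A}}\mapsto v_t$ identifies $\Gamma_j^+$ with $\mathrm{circ}_{n/d_\mathcal{A}}(\{a/d_\mathcal{A} : a\in\mathcal{A}\})$ (each $a$ being a multiple of $d_\mathcal{A}$); symmetrically for the $d_\mathcal{B}$ negative components. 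In case (a), where $\mathcal{Q}\neq\emptyset$, a round trip $x_i\to x_{i+q}^{-1}\to x_{i+q-q'}$ (possibly with intervening $\mathcal{B}$-edges) shows the positive subscripts reachable from $x_j$ are $j+\langle a,\,b,\,q-q' : a\in\mathcal{A},\,b\in\mathcal{B},\,q,q'\in\mathcal{Q}\rangle=j+\langle d\rangle$, while every $\mathcal{Q}$-edge sends a positive subscript $\equiv j$ to a negative subscript $\equiv j+q_0 \bmod d$ (since $q\equiv q_0\bmod d$ for all $q\in\mathcal{Q}$). This simultaneously connects $V(\Gamma_j^+)$ to $V(\Gamma_j^-)$ as described and shows no edge leaves the pair; as $j$ runs over $\Z_d$ the pairing $j\mapsto j+q_0$ is a bijection, so there are exactly $d$ components.

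The main obstacle I anticipate is the bookkeeping of the first step — verifying that the concise relators' length-$2$ cyclic subwords are exactly the full shift-orbit $\{\theta^m(s)\}$ with the right multiplicities, including the correct treatment of the wrap-around boundary subword — together with the connectivity argument in case (a), where one must be careful that it is the differences $q-q'$ (rather than the individual $q$) that act within a single part, and that the constant residue $q_0\bmod d$ is precisely what aligns the positive and negative parts of each component.
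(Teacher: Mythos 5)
Your proposal is correct, and its skeleton matches the paper's: both pass to the concise truncation $P_{n,(n,h)}(w)$ via Corollary~\ref{cor:conciserefinementorientable} and read the edges off the length-$2$ subwords of $u\theta^{h}(\iota(u))$, which is exactly what the multisets $\mathcal{A},\mathcal{B},\mathcal{Q}$ of Definition~\ref{def:ABQorientable} encode. The difference is one of self-containment: the paper describes $\Gamma$ as the star graph $\Lambda$ of the concise presentation $P_n(u)$ with, for each $i$, one edge $\theta^i(\tau(u))-\theta^i(\iota(u))^{-1}$ swapped for $\theta^i(\tau(u))-\theta^{i+h}(\iota(u))^{-1}$, imports the description of $\Lambda$ from \cite[Theorem~3.3]{ChinyereWilliamsGP}, and then defers regularity and parts (a),(b) to the proof of that theorem; you instead verify directly that the residues $j+ih$ ($0\le j<(n,h)$, $0\le i<\lambda$) exhaust $\Z_n$ (this is the justification the paper leaves implicit in its ``for each $0\leq i<n$'' edge-swap claim) and then prove regularity and the component structure from scratch. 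What your route buys is independence from the earlier paper; what it costs is that you must redo the coset/connectivity analysis, which you do correctly (in particular the observations that only the differences $q-q'$ act on a single part and that $q\equiv q_0\bmod d$ aligns the two parts are exactly the right points). One small imprecision: a subword $s$ whose first letter is positive \emph{and} whose second letter is negative contributes $2$ to $\deg(x_i)$ (one endpoint from each of two shifts), not $1$, so your ``precisely when \dots\ or \dots'' should be the sum of the two indicator functions; your subsequent summation, which yields (number of positive letters of $u$) $+$ (number of negative letters of $u$) $=l(u)$, already treats it this way, so the conclusion stands.
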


\begin{proof}
By Corollary \ref{cor:conciserefinementorientable}, the star graph of $P_n(w)$ is equal to the star graph of the concise $(n,h)$-truncation $P_{n,(n,h)}(w)$ of $P_n(w)$. Let $P=P_{n,(n,h)}(w)$, $Q=P_n(u)$ with star graphs $\Gamma$ and $\Lambda$, respectively. Observe that each relator of $P$ has length $n/(n,h) \cdot l(u)$ so the sum of the lengths of the relators of $P$ is equal to $(n/(n,h) \cdot l(u))\cdot (n,h)=nl(u)$, which is the sum of the lengths of the relators of $Q$. By Corollary \ref{cor:conciserefinementorientable}, since $u$ is chosen to be the shortest possible, the presentation $Q$ is a concise cyclic presentation, and so its star graph is described in \cite[Theorem~3.3]{ChinyereWilliamsGP}. Our strategy is to identify the differences between $\Gamma$ and $\Lambda$. We use the multisets $\mathcal{A}, \mathcal{B}, \mathcal{Q}$ of Definition \ref{def:ABQorientable} for each presentation $P,Q$, denoting them $\mathcal{A}_P,\mathcal{B}_P,\mathcal{Q}_P$ (for $P$) and $\mathcal{A}_Q,\mathcal{B}_Q,\mathcal{Q}_Q$ (for $Q$). These multisets are related as follows (where $\cup$ and $\backslash$ denote multiset sum and multiset difference, respectively)
\begin{alignat*}{1}
  \mathcal{A}_Q &= \{ a\ |\ \mathrm{there~is~a~subword}~x_ix_{i+a}^{-1}~\mathrm{of}~u,~\mathrm{counting~multiplicities}\}\\
  &\qquad \cup \{ a\ |\ \mathrm{there~is~a~subword}~x_ix_{i+a}^{-1}~\mathrm{of}~\tau(u)\iota(u)\},\\
  \mathcal{A}_P
  &= \{ a\ |\ \mathrm{there~is~a~subword}~x_ix_{i+a}^{-1}~\mathrm{of}~u,~\mathrm{counting~multiplicities}\}\\
  &\qquad \cup \{ a\ |\ \mathrm{there~is~a~subword}~x_ix_{i+a}^{-1}~\mathrm{of}~\tau(u)\theta^{h}(\iota(u))\}\\
  &= (\mathcal{A}_Q \backslash \{ a\ |\ \mathrm{there~is~a~subword}~x_ix_{i+a}^{-1}~\mathrm{of}~\tau(u)\iota(u)\})\\
   &\qquad \cup \{ a\ |\ \mathrm{there~is~a~subword}~x_ix_{i+a}^{-1}~\mathrm{of}~\tau(u)\theta^{h}(\iota(u))\}.
  \intertext{Similarly,}
  \mathcal{B}_P&= (\mathcal{B}_Q \backslash \{ b\ |\ \mathrm{there~is~a~subword}~x_i^{-1}x_{i+b}~\mathrm{of}~\tau(u)\iota(u)\})\\
   &\qquad \cup \{ b\ |\ \mathrm{there~is~a~subword}~x_i^{-1}x_{i+b}~\mathrm{of}~\tau(u)\theta^{h}(\iota(u))\},\\
  \mathcal{Q}_P&= (\mathcal{Q}_Q \backslash
  \{ q\ |\ \mathrm{there~is~a~subword}~x_ix_{i+q}~\mathrm{or}~x_{i+q}^{-1}x_i^{-1}~\mathrm{of}~\tau(u)\iota(u)\}
  ) \\
  &\ \qquad \cup
  \{ q\ |\ \mathrm{there~is~a~subword}~x_ix_{i+q}~\mathrm{or}~x_{i+q}^{-1}x_i^{-1}~\mathrm{of}~\tau(u)\theta^{h}(\iota(u))\}.
\end{alignat*}

The graph $\Gamma$ is obtained from $\Lambda$ by, for each $0\leq i<n$, removing exactly one edge of the form $\theta^i(\tau(u)) - \theta^i(\iota(u))^{-1}$ and then adjoining one edge of the form $\theta^i(\tau(u)) - \theta^{i+h}(\iota(u))^{-1}$. By \cite[Theorem 3.3]{ChinyereWilliamsGP}, $\Lambda$ is $l(u)$-regular and has vertices $x_i,x_i^{-1}$ and edges $x_i-x_{i+a}$, $x_i^{-1}-x_{i+b}^{-1}$, $x_i-x_{i+q}^{-1}$ for all $a\in \mathcal{A}_Q$, $b\in \mathcal{B}_Q$, $q\in \mathcal{Q}_Q$, $0\leq i<n$. Therefore $\Gamma$ is also $l(u)$-regular, and has vertices $x_i,x_i^{-1}$ and the same (multi-)set of edges as $\Lambda$, but with the substitutions described above applied. That is, $\Gamma$ has edges $x_i-x_{i+a}$, $x_i^{-1}-x_{i+b}^{-1}$, $x_i-x_{i+q}^{-1}$ for all $a\in \mathcal{A}_P$, $b\in \mathcal{B}_P$, $q\in \mathcal{Q}_P$, $0\leq i<n$, completing the proof of first part of the statement. With this description of $\Gamma$ in place, the statements (a),(b) follow as in the proof of \cite[Theorem 3.3]{ChinyereWilliamsGP}.
\end{proof}

Analogously to \cite[Corollary 3.4]{ChinyereWilliamsGP} we have the following immediate corollary:

\begin{corollary}\label{cor:3knuand2knuorientable}
Let $P_n(w)$ be an orientable cyclic presentation in which $w$ is of length $k$ and is not a proper power and let $u$ be the shortest subword of $w$ such that $w=\prod_{i=0}^{n/(n,h)-1} \theta^{i h}(u)$ for any $0\leq h<n$. Then
\begin{itemize}
  \item[(a)] $P_n(w)$ is $(3,k,\nu)$-special if and only if $l(u)^2-l(u)+1=n/\nu$ and each component of the star graph is the incidence graph of a projective plane of order $l(u)-1$;
  \item[(b)] $P_n(w)$ is $(2,k,\nu)$-special if and only if $l(u)=n/\nu$ and each component of the star graph is the complete bipartite graph $K_{l(u),l(u)}$.
\end{itemize}
\end{corollary}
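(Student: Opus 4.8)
The plan is to read the structure of $\Gamma$ straight off Theorem \ref{thm:stargraphorientable} --- that $\Gamma$ is $l(u)$-regular on the $2n$ vertices $x_i,x_i^{-1}$, and that its components are pairwise isomorphic, the isomorphisms being induced by the shift, which acts as an automorphism of $\Gamma$ and permutes the components transitively --- and then to reduce each of (a) and (b) to a vertex count against the standard graph-theoretic description of the incidence graphs occurring for $m=3$ and $m=2$. (The transitivity of the shift is needed in the converse directions, since projective planes of the same order can have non-isomorphic incidence graphs, so ``same order'' alone would not deliver the isomorphic components demanded by Definition \ref{def:mkalphaspecialpres}.)

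For (a) I would use that a connected bipartite graph of diameter $3$ and girth $6$ with every vertex of degree at least $3$ is exactly the incidence graph of a thick projective plane, and that the incidence graph of a projective plane of order $q$ is $(q+1)$-regular with $q^2+q+1$ vertices in each part. For the forward direction, if $P_n(w)$ is $(3,k,\nu)$-special then every component of $\Gamma$ is such an incidence graph; since each component inherits the regularity of $\Gamma$ it has order $q=l(u)-1\geq 2$, hence $2(q^2+q+1)=2(l(u)^2-l(u)+1)$ vertices, so dividing the total $2n$ by this number and equating to $\nu$ gives $l(u)^2-l(u)+1=n/\nu$. Conversely these hypotheses force $2n/\nu$ vertices per component, hence exactly $\nu$ components; being pairwise isomorphic projective-plane incidence graphs of order $l(u)-1\geq 2$, they meet every requirement of Definition \ref{def:mkalphaspecialpres} (condition (c) being vacuous for $m=3$), so $P_n(w)$ is $(3,k,\nu)$-special.

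Part (b) is parallel, using the elementary observation that a connected bipartite graph of diameter $2$ and girth $4$ is complete bipartite --- if vertices $a,b$ in opposite parts were non-adjacent then $d(a,b)\geq 3>2$ --- so that a regular such component of degree $l(u)$ must be $K_{l(u),l(u)}$, on $2l(u)$ vertices. Equating $\nu\cdot 2l(u)=2n$ yields $l(u)=n/\nu$, and conversely the vertex count produces exactly $\nu$ isomorphic copies of $K_{l(u),l(u)}$, each of diameter $2$, girth $4$ and degree $l(u)\geq 3$.

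The one genuinely non-formal point, which I would treat most carefully, is condition (c) of Definition \ref{def:mkalphaspecialpres} in part (b): a $(2,k,\nu)$-special presentation must have $k\geq 4$, whereas the star-graph data alone force only $l(u)\geq 3$. Writing $k=l(w)=(n/(n,h))\,l(u)$, condition (c) holds automatically once $n/(n,h)\geq 2$, and the sole delicate case is the concise one $w=u$ with $l(u)=3$ --- for instance $P_3(x_0x_1x_0)$, whose star graph is $K_{3,3}$ --- where $k=3$ and the presentation fails to be $(2,k,\nu)$-special even though the right-hand conditions of (b) hold. The equivalence in (b) must therefore be read with the standing requirement $k\geq 4$ built into ``$(2,k,\nu)$-special'', and this length-$3$ exception is the step where the argument is not purely a matter of counting.
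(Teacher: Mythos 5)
Your argument is correct and is exactly the routine verification the paper has in mind: it states Corollary \ref{cor:3knuand2knuorientable} as an immediate consequence of Theorem \ref{thm:stargraphorientable} (regularity of degree $l(u)$, $2n$ vertices, shift-induced isomorphisms between components) combined with the standard identifications of thick generalized $3$-gons with projective planes and of generalized $2$-gons with complete bipartite graphs, so no separate proof is given. Your closing observation about Definition \ref{def:mkalphaspecialpres}(c) is a genuine edge case in the literal ``if'' direction of part (b) --- e.g.\ $P_3(x_0x_1x_0)$ has star graph $K_{3,3}$ and satisfies $l(u)=n/\nu$ yet is not $(2,3,1)$-special since $k=3<4$ --- and is a fair caveat that the paper's statement leaves implicit.
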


Analogously to \cite[Corollary 4.2]{ChinyereWilliamsGP} we have the following:

\begin{corollary}\label{cor:orientablegirthatmost8}
Let $P_n(w)$ be an orientable cyclic presentation in which $w$ is of length $k$ and is not a proper power and let $u$ be the shortest subword of $w$ such that $w=\prod_{i=0}^{n/(n,h)-1} \theta^{i h}(u)$ for any $0\leq h<n$. Suppose that $u$ has length at least 3 and let $\Gamma$ be the star graph of $P_n(w)$. If $u$ is a non-positive, non-negative, word of length $3$ then $\mathrm{girth}(\Gamma)\leq 8$; otherwise $\mathrm{girth}(\Gamma)\leq 6$.
\end{corollary}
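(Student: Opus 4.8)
The plan is to feed the explicit vertex-and-edge description of $\Gamma$ from Theorem~\ref{thm:stargraphorientable} into a direct search for short reduced closed walks, phrased through the difference multisets $\mathcal{A},\mathcal{B},\mathcal{Q}$ attached to $u\theta^{h}(\iota(u))$. Recall that $\Gamma$ is $l(u)$-regular with $|\mathcal{A}|+|\mathcal{B}|+|\mathcal{Q}|=l(u)$; moreover, reading the sign sequence of $u\theta^{h}(\iota(u))$ cyclically (its last letter $\theta^{h}(\iota(u))$ has the same sign as its first letter $\iota(u)$) shows that the number of $+-$ transitions equals the number of $-+$ transitions, so $|\mathcal{A}|=|\mathcal{B}|=:p$. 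First I would clear away the degenerate configurations: a zero entry in $\mathcal{A}\cup\mathcal{B}$ gives a loop and hence $\mathrm{girth}(\Gamma)\leq 1$, while a repeated entry in any of $\mathcal{A},\mathcal{B},\mathcal{Q}$, or a pair $a,-a$ in $\mathcal{A}$ or in $\mathcal{B}$, gives a multi-edge and hence $\mathrm{girth}(\Gamma)\leq 2$. Thus I may assume that within each multiset the differences are distinct, nonzero, and (in $\mathcal{A},\mathcal{B}$) no two sum to $0$ modulo $n$.

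With these removed I would exhibit an explicit short reduced closed walk in every surviving configuration, organised by $p$ and $l(u)$. The governing principle is that, since $n$ is not under our control, the signed subscript displacements along the walk must cancel as an exact identity over $\Z$ (so that the walk closes for every $n$), while consecutive edges must differ (so that the walk is reduced); in particular the congruence (\ref{eq:A+B+Q+-Q-}), holding only modulo $n$, is of no direct use here. When $p\geq 2$ I would take distinct $a_1,a_2\in\mathcal{A}$ and use the $4$-cycle on $x_0,x_{a_1},x_{a_1-a_2},x_{-a_2}$, so $\mathrm{girth}(\Gamma)\leq 4$. When $p=0$ the word $u$ is positive or negative, $\mathcal{Q}$ carries all $l(u)\geq 3$ differences, and three distinct $q_1,q_2,q_3\in\mathcal{Q}$ give the $6$-cycle with displacement sequence $+q_1,-q_2,+q_3,-q_1,+q_2,-q_3$ between the positive and negative vertices, so $\mathrm{girth}(\Gamma)\leq 6$.

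The case $p=1$ is where the work concentrates. If $l(u)\geq 4$ then $|\mathcal{Q}|=l(u)-2\geq 2$, and combining the single $\mathcal{A}$-edge with two distinct $q_1,q_2\in\mathcal{Q}$ yields the length-$6$ reduced closed walk with displacement sequence $+q_1,-q_2,+a,+q_2,-q_1,-a$, whose sum telescopes to $0$; hence $\mathrm{girth}(\Gamma)\leq 6$. The extremal case is $p=1$ with $l(u)=3$: here $\mathcal{A}=\{a\}$, $\mathcal{B}=\{b\}$, $\mathcal{Q}=\{q\}$ are all singletons and $u$ is non-positive and non-negative, exactly as in the statement; no multiset now supplies two distinct differences, so the previous constructions are unavailable. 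For this I would use the length-$8$ walk alternating edge types $a,q,b,q,a,q,b,q$, with orientations $+a,+q,+b,-q,-a,+q,-b,-q$, which closes because its displacements cancel identically and which is reduced because consecutive edges always have different types; this gives $\mathrm{girth}(\Gamma)\leq 8$.

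The main obstacle is precisely this last, tightest, configuration. With only one difference of each type available, every shorter candidate closed walk either backtracks or fails to close over $\Z$, and I expect the argument to hinge on showing that no reduced, identically-closing walk of length $6$ exists in the three-singleton case, while the eight-step walk above does the job; it is this gap that accounts for the jump from the bound $6$ to the bound $8$. The remaining steps are routine: checking that each proposed walk is genuinely reduced (which reduces to the distinctness and non-degeneracy already arranged) and confirming, via $|\mathcal{A}|=|\mathcal{B}|$ and $|\mathcal{A}|+|\mathcal{B}|+|\mathcal{Q}|=l(u)\geq 3$, that the four configurations above exhaust all cases.
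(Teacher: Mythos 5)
Your proof is correct, and it reaches the bound by the same underlying strategy as the paper: read off the edge structure of $\Gamma$ from the difference multisets of Theorem~\ref{thm:stargraphorientable} and exhibit explicit reduced closed paths whose subscript displacements cancel identically. The organisation, however, is genuinely different. The paper splits on $l(u)$: for $l(u)\geq 4$ it defers to a modification of \cite[Lemma 4.1, Corollary 4.2]{ChinyereWilliamsGP}, and for $l(u)=3$ it normalises $u$ by inversion and cyclic permutation (via Lemma~\ref{lem:allcycpermshavethisform}) into the two forms $x_0x_px_{p+q}$ and $x_0x_px_{p+q}^{-1}$, producing a $6$-path and an $8$-path that are essentially identical to yours. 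You instead stratify by $p=|\mathcal{A}|=|\mathcal{B}|$ (your counting argument for $|\mathcal{A}|=|\mathcal{B}|$ and $|\mathcal{A}|+|\mathcal{B}|+|\mathcal{Q}|=l(u)$ is sound), which makes the argument self-contained, avoids the appeal to the external lemmas in the $l(u)\geq 4$ case, and yields the sharper bound $\mathrm{girth}(\Gamma)\leq 4$ whenever $p\geq 2$; your preliminary disposal of repeated or mutually inverse differences via parallel edges is consistent with the paper's multigraph conventions (the star graph of the concise truncation has exactly $nl(u)$ edges, one per length-two cyclic subword occurrence, so distinct multiset entries do give distinct edges). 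One correction: your closing paragraph worries that the argument "hinges on showing that no reduced, identically-closing walk of length $6$ exists in the three-singleton case" --- it does not. The corollary asserts only the upper bound $\mathrm{girth}(\Gamma)\leq 8$ there, so your explicit length-$8$ walk already finishes that case; nothing about the non-existence of shorter cycles is required (and indeed for particular $n$ the girth can be smaller).
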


\begin{proof}
If $l(u)\geq 4$ then by a modification of \cite[Lemma 4.1, Corollary 4.2]{ChinyereWilliamsGP} $\mathrm{girth}(\Gamma)\leq 6$. (More precisely, parts (a),(c),(d),(e),(f) of \cite[Lemma 4.1]{ChinyereWilliamsGP} hold for the star graph $\Gamma$ of the, possibly redundant, cyclic presentation $P_n(w)$ if ``$w$'' is replaced by ``$u$'' and ``cyclic subword'' is replaced by ``subword''; then the argument of \cite[Corollary 4.2]{ChinyereWilliamsGP} carries through as before, if ``cyclic subword'' is replaced by ``subword''.) Thus we may assume $l(u)=3$. By
inversion or cyclic permutation, and noting that by Lemma \ref{lem:allcycpermshavethisform} every cyclic permutation of $w$ has the form in the statement, we may assume that either (a) $u=x_0x_px_{p+q}$ or (b) $u=x_0x_px_{p+q}^{-1}$ for some $(0\leq p,q<n)$. In case (a) $\Gamma$ has edges $x_i-x_{i+p}^{-1}$, $x_i-x_{i+q}^{-1}$, $x_i-x_{i+h-p-q}^{-1}$ $(0\leq i<n)$. Then $\Gamma$ contains the reduced closed path $x_0-x_p^{-1}-x_{p-q}-x_{h-2q}^{-1}-x_{h-2q-p}-x_{h -q-p}^{-1}-x_0$ of length 6, so $\mathrm{girth}(\Gamma)\leq 6$. In case (b) $\Gamma$ has edges $x_i-x_{i+p}^{-1}$, $x_i-x_{i+q}$, $x_i^{-1}-x_{i+h-p-q}^{-1}$ $(0\leq i<n)$. Then $\Gamma$ contains the reduced closed path $x_0-x_p^{-1}-x_{h-q}^{-1}-x_{h-p-q}-x_{h-p}-x_{h}^{-1}-x_{p+q}^{-1}-x_q-x_0$ of length 8, so $\mathrm{girth}(\Gamma)\leq 8$.
\end{proof}

As in \cite[Theorem A]{ChinyereWilliamsGP}, it follows from \cite{HillPrideVella} that if $l(u)\geq 3$ and if $P_n(w)$ satisfies the small cancellation condition $T(q)$ where $q\geq 7$ then $l(u)=3$ and $u$ is non-positive and non-negative.

The following corollary is analogous to the final statement of \cite[Theorem A]{ChinyereWilliamsGP}.

\begin{corollary}\label{cor:orientable(>2,k,nu)}
Suppose $P_n(w)$ is orientable. If $P_n(w)$ is $(m,k,\nu)$-special with $m\geq 3$ then $m=3$ and $w$ is positive or negative.
\end{corollary}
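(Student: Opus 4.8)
The plan is to combine the girth bound of Corollary~\ref{cor:orientablegirthatmost8} with the explicit star graph description of Theorem~\ref{thm:stargraphorientable}, the decisive tool being a parity argument that eliminates the non-positive, non-negative case. First I would reduce to the case that $w$ is not a proper power (replacing $w$ by its root preserves orientability, leaves the star graph $\Gamma$ unchanged, and preserves $(m,k,\nu)$-speciality), and let $u$ be the shortest subword with $w=\prod_{i=0}^{n/(n,h)-1}\theta^{ih}(u)$. Since $P_n(w)$ is $(m,k,\nu)$-special, every vertex of $\Gamma$ has degree at least $3$, so the $l(u)$-regularity of $\Gamma$ forces $l(u)\geq 3$, and each component is a generalized $m$-gon incidence graph, hence bipartite, balanced, and of girth $2m$. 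By Corollary~\ref{cor:orientablegirthatmost8} we have $\mathrm{girth}(\Gamma)\leq 8$, so $2m\leq 8$ and thus $m\in\{3,4\}$.

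Next I would dispose of the case where $u$ is positive or negative: here Corollary~\ref{cor:orientablegirthatmost8} gives $\mathrm{girth}(\Gamma)\leq 6$, so $2m\leq 6$, forcing $m=3$; and since $\theta$ preserves sign and $w=\prod\theta^{ih}(u)$, the word $w$ is then positive or negative, as required. Note too that if $m=4$ then $\mathrm{girth}(\Gamma)=8>6$, so Corollary~\ref{cor:orientablegirthatmost8} forces $u$ to be non-positive, non-negative of length $3$; this case is absorbed into the argument below. It therefore remains to derive a contradiction whenever $u$ is non-positive and non-negative.

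In this remaining case I would apply Theorem~\ref{thm:stargraphorientable}, splitting according to whether $u$ is cyclically alternating. If it is (Theorem~\ref{thm:stargraphorientable}(b)) then each component is a circulant $\mathrm{circ}_{n/d_\mathcal{A}}(\{a/d_\mathcal{A}\ (a\in\mathcal{A})\})$ of degree $l(u)\geq 4$ (cyclically alternating words have even length $\geq 4$); its symmetric connection set then contains $s_1\neq\pm s_2$, so $0,s_1,s_1+s_2,s_2$ spans a $4$-cycle and $\mathrm{girth}(\Gamma)\leq 4<6$, contradicting $m\geq 3$. If $u$ is not cyclically alternating (Theorem~\ref{thm:stargraphorientable}(a)) then, since $u$ has letters of both signs, $\mathcal{A}\neq\emptyset$, and each component $\Gamma_j$ is connected and balanced with exactly $n/d$ positive and $n/d$ negative vertices. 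The key point is a parity clash: comparing vertex counts with a generalized $m$-gon incidence graph gives $n/d=N_0$, where $N_0$ is the (odd) part-size — namely $N_0=l(u)^2-l(u)+1$ when $m=3$ (as in Corollary~\ref{cor:3knuand2knuorientable}(a)), and $N_0=15$ in the forced case $l(u)=3$ when $m=4$ — so the number of positive vertices $n/d=N_0$ is odd. On the other hand, the nonempty system of $\mathcal{A}$-edges on the positive vertices consists of edges of the bipartite graph $\Gamma_j$, hence joins its two parts; thus the subgraph spanned by the positive vertices and the $\mathcal{A}$-edges is a regular bipartite graph of positive degree, and so has an even number of vertices. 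This contradicts $N_0$ odd, completing the proof.

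The hard part is exactly this last step. Unlike the positive or negative case, the non-positive, non-negative case is \emph{not} excluded by the girth bound alone: when $l(u)=3$ the girth can genuinely equal $8$, leaving open the a priori possibility of a generalized quadrangle. The decisive observation is the parity incompatibility between the odd part-size of a generalized $3$- or $4$-gon incidence graph and the necessarily even number of positive vertices forced by the $\mathcal{A}$-edges. Two points require care: verifying that $\mathcal{A}\neq\emptyset$ (which follows from $u$ carrying both signs, so that some sign change survives in $u\theta^{h}(\iota(u))$), and checking that $N_0$ is odd precisely in the surviving cases. The potentially troublesome subcase $m=4$ with $l(u)\geq 4$, where the generalized quadrangle part-size $(s+1)(s^2+1)$ would be \emph{even}, never arises, since Corollary~\ref{cor:orientablegirthatmost8} forces $l(u)=3$ as soon as $\mathrm{girth}(\Gamma)=8$.
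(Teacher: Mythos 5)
Your proof is correct and follows essentially the same route as the paper: the girth bound of Corollary~\ref{cor:orientablegirthatmost8} reduces to $m\in\{3,4\}$ and settles the positive/negative case, and the non-positive, non-negative case is eliminated by the same parity clash the paper uses --- the $\mathcal{A}$-edges force the number of positive vertices per component, $n/\nu$, to be even, while the generalized polygon forces it to equal the odd point count (the paper derives evenness from the even closed walk $x_0-x_a-\cdots-x_{(t-1)a}-x_0$ rather than your regular-bipartite-subgraph count, but this is the same idea). Your separate treatment of the cyclically alternating subcase is a reasonable extra precaution (there each component contains only positive or only negative vertices, so the parity count is vacuous); the only slip is the unjustified assertion that the connection set contains $s_1\neq \pm s_2$ --- it could consist of repeats or of a pair $\{a,-a\}$ --- but in that event the circulant has parallel edges and girth $2$, so your conclusion $\mathrm{girth}(\Gamma)\leq 4$ stands.
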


\begin{proof}
Let $u$ be the shortest  subword of $w$ such that $w=\prod_{i=0}^{n/(n,h)-1} \theta^{i h}(u)$ for any $0\leq h< n$. If $l(u)<3$ then the star graph of $P_n(w)$ is at most 2-regular so $P_n(w)$ is not special, so we may assume $l(u)\geq 3$. If $u$ is positive or negative then by Corollary~\ref{cor:orientablegirthatmost8} $\mathrm{girth}(\Gamma)\leq 6$ so $m=3$, as required. Suppose then $u$ is non-positive and non-negative and that $P_n(w)$ is $(m,k,\nu)$-special, where $m\in \{3,4\}$ by Corollary~\ref{cor:orientablegirthatmost8}. Then there exists $a \in \mathcal{A}$. Then, letting $t=n/(n,a)$, $x_0-x_a-x_{2a}-\cdots  - x_{(t-1)a}-x_0$ is a reduced closed path of length $t$, which is therefore even (since $\Gamma$ is bipartite), so $t$ is even. Then (noting that $\nu$ divides $(n,a)$ by Theorem \ref{thm:stargraphorientable}) $n/\nu =t\cdot ((n,a)/\nu)$ is even. Since each component of $\Gamma$ is the incidence graph of a generalized $m$-gon it has 14 vertices if $m=3$, and $30$ vertices if $m=4$ (see \cite[Corollary 1.5.5]{Maldeghem}). Therefore $n/\nu=7$ or $15$, a contradiction.
\end{proof}

\subsection{Star graphs of non-orientable cyclic presentations}\label{sec:stargraphnonorientableredundantNEW}

We now define multisets of differences of subscripts in length 2 subwords of a particular cyclic subword of $w$ for the non-orientable case.

\begin{defn}\label{def:ABQnonorientable}
Suppose $w=u\theta^{n/2}(u)^{-1}$. Let $\bar{\mathcal{A}}',\bar{\mathcal{B}}',\bar{\mathcal{Q}}$ be the multisets defined as follows:
\begin{alignat*}{1}
  \bar{\mathcal{A}}' &= \{ a\ |\ \mathrm{there~is~a~subword}~x_ix_{i+a}^{-1}~\mathrm{of}~u,~\mathrm{counting~multiplicities}\},\\
  \bar{\mathcal{B}}' &= \{ b\ |\ \mathrm{there~is~a~subword}~x_i^{-1}x_{i+b}~\mathrm{of}~u,~\mathrm{counting~multiplicities}\},\\
  \bar{\mathcal{Q}} &= \{ q\ |\ \mathrm{there~is~a~subword}~x_ix_{i+q}~\mathrm{or}~x_{i+q}^{-1}x_{i}^{-1}~\mathrm{of}~u,~\mathrm{counting~multiplicities}\},
\end{alignat*}
and define $\bar{\mathcal{A}},\bar{\mathcal{B}}$ as follows:
\[ (\bar{\mathcal{A}},\bar{\mathcal{B}} )=
\begin{cases}
(\bar{\mathcal{A}}' \cup \{n/2\},  \bar{\mathcal{B}}' \cup \{n/2\}) & \iota(u),\tau(u)~\text{both~positive~or~both~negative},\\
(\bar{\mathcal{A}}',  \bar{\mathcal{B}}' \cup \{n/2,n/2\})& \iota(u)~\text{positive~and}~\tau(u)~\text{negative},\\
(\bar{\mathcal{A}}'\cup \{n/2,n/2\},  \bar{\mathcal{B}}') & \iota(u)~\text{negative~and}~\tau(u)~\text{positive}.
\end{cases}
\]
\end{defn}

Note that $\bar{\mathcal{Q}}=\emptyset$ if and only if $u$ is alternating and that, letting $v$ denote the cyclic subword $\theta^{n/2}(\iota(u)^{-1}) u \theta^{n/2} (\tau (u))^{-1}$ of $w$,
\begin{alignat*}{1}
  \bar{\mathcal{A}} &= \{ a\ |\ \mathrm{there~is~a~subword}~x_ix_{i+a}^{-1}~\mathrm{of}~v,~\mathrm{counting~multiplicities}\},\\
  \bar{\mathcal{B}} &= \{ b\ |\ \mathrm{there~is~a~subword}~x_i^{-1}x_{i+b}~\mathrm{of}~v,~\mathrm{counting~multiplicities}\}.
\end{alignat*}
Theorem~\ref{thm:stargraphnonorientableNEW} describes the star graph of a non-orientable cyclic presentation $P_n(u\theta^{n/2}(u^{-1}))$. We may assume that $u\theta^{n/2}(u^{-1})$ is not a proper power (noting that proper power words of this form exist, such as $(x_0x_1^{-1})^2$ when $n=2$). The reader may find the following presentations helpful as illustrative examples: $P_4(x_0x_1x_3^{-1}x_2^{-1})$, $P_4(x_1x_3^{-1}x_2^{-1}x_0x_2^{-1}x_0x_1x_3^{-1})$, $P_4(x_0x_1^{-1}x_3x_2^{-1})$,
$P_4(x_0x_1^{-1}x_0 x_2^{-1}x_3x_2^{-1})$.

\begin{theorem}\label{thm:stargraphnonorientableNEW}
Let $\Gamma$ be the star graph of $P_n(w)$ where $w=u\theta^{n/2}(u)^{-1}$ and is not a proper power. Let $d_{\bar{\mathcal{A}}}=\mathrm{gcd}(n,a\ (a\in \bar{\mathcal{A}}))$, $d_{\bar{\mathcal{B}}}=\mathrm{gcd}(n,b\ (b\in \bar{\mathcal{B}}))$, and if $u$ is not alternating let $q_0\in \bar{\mathcal{Q}}$ and set $d=\mathrm{gcd}(n, a\ (a\in \bar{\mathcal{A}}), b\ (b\in \bar{\mathcal{B}}), q-q_0\ (q\in \bar{\mathcal{Q}}))$. Then $\Gamma$ is $l(u)$-regular and has vertices $x_i,x_i^{-1}$ ($0\leq i<n$) and edges $x_i-x_{i+a}$, $x_i^{-1}-x_{i+b}^{-1}$, $x_i-x_{i+q}^{-1}$ for all $a \in \bar{\mathcal{A}}'$, $b\in \bar{\mathcal{B}}'$, $q\in \bar{\mathcal{Q}}'$, $0\leq i<n$ and edges
$x_i-x_{i+a}$, $x_i^{-1}-x_{i+b}^{-1}$, for all $a \in \bar{\mathcal{A}}\backslash \bar{\mathcal{A}}'$, $b\in \bar{\mathcal{B}}\backslash \bar{\mathcal{B}}'$, $0\leq i<n/2$.

\begin{itemize}
  \item[(a)] If $u$ is not alternating then $\Gamma$ has $d$ connected components $\Gamma_0,\ldots, \Gamma_{d-1}$ where for $0\leq j<d$ the graph $\Gamma_j$ is the induced subgraph of $\Gamma$ with vertex sets $V(\Gamma_j)=V(\Gamma_j^+)\cup V(\Gamma_j^-)$ where $\Gamma_j^+$ and $\Gamma_j^-$ are the induced subgraphs of $\Gamma$ with vertex sets
      \begin{alignat*}{1}
        V(\Gamma_j^+) &= \{ x_{j+td}\ |\ 0\leq t<n/d\},\\
        V(\Gamma_j^-) &= \{ x_{j+td+q_0}^{-1}\ |\ 0\leq t<n/d\}
      \end{alignat*}
      (subscripts mod~$n$). In particular, $|V(\Gamma_j^+)|=|V(\Gamma_j^-)|=n/d$ for all $0\leq j<d$ and the subscripts of the positive (resp.\,negative) vertices in any component are congruent mod $d$.

  \item[(b)] If $u$ is alternating then $\Gamma$ has $d_{\bar{\mathcal{A}}}+d_{\bar{\mathcal{B}}}$ connected components $\Gamma_0^+,\ldots, \Gamma_{d_{\bar{\mathcal{A}}}-1}^+$, $\Gamma_0^-,\ldots, \Gamma_{d_{\bar{\mathcal{B}}}-1}^-$,  which are, respectively, the induced labelled subgraphs of $\Gamma$ with vertex sets
      \begin{alignat*}{1}
        V(\Gamma_j^+) &= \{ x_{j+td_{\bar{\mathcal{A}}}}\ |\ 0\leq t<n/d_{\bar{\mathcal{A}}}\},\\
        V(\Gamma_j^-) &= \{ x_{j+td_{\bar{\mathcal{B}}}}^{-1}\ |\ 0\leq t<n/d_{\bar{\mathcal{B}}}\}
      \end{alignat*}
      (subscripts mod~$n$). Moreover, each component $\Gamma_j^+$ is isomorphic to the graph \linebreak $\mathrm{circ}'_{n/d_{\bar{\mathcal{A}}}}(\{ a/d_{\bar{\mathcal{A}}}\ (a\in\bar{\mathcal{A}})\})$ and each component $\Gamma_j^-$ is isomorphic to $\mathrm{circ}'_{n/d_{\bar{\mathcal{B}}}}(\{ b/d_{\bar{\mathcal{B}}}\ (b\in\bar{\mathcal{B}})\})$.
\end{itemize}
\end{theorem}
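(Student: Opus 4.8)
The plan is to reduce to the concise refinement and then transfer the analysis from the orientable case. By Theorem~\ref{thm:classificationofnonorientablepresentations} the truncation $P_{n,n/2}(w)$ is a concise refinement of $P_n(w)$, and since deleting freely redundant relators leaves the star graph unchanged (Section~\ref{sec:stargraph}), the star graph $\Gamma$ of $P_n(w)$ equals that of $P_{n,n/2}(w)$; as noted before the statement we may assume $w$ is not a proper power. The key structural observation is that $\theta^{i+n/2}(w)=\theta^i(w)^{-1}$, which is precisely the non-orientability of $w$. Writing $R=\{\theta^i(w): 0\le i<n/2\}$ for the relators of $P_{n,n/2}(w)$, this identity gives $R^{-1}=\{\theta^j(w): n/2\le j<n\}$, so $R\cup R^{-1}=\{\theta^i(w): 0\le i<n\}$ and the symmetrized closure $\tilde R$ is simply the set of all cyclic permutations of all $n$ shifts of $w$. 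Thus the edges of $\Gamma$ are read off from the length-$2$ cyclic subwords of the words $\theta^i(w)$ ($0\le i<n$), with inverse pairs identified, exactly as for the full presentation.

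Next I would compute the edges by decomposing the cyclic subwords of $w=u\theta^{n/2}(u)^{-1}$ into three kinds: those internal to $u$, those internal to $\theta^{n/2}(u)^{-1}$, and the two \emph{junction} subwords $\tau(u)\cdot\theta^{n/2}(\tau(u))^{-1}$ and $\theta^{n/2}(\iota(u)^{-1})\cdot\iota(u)$ straddling the boundary. The pairing is governed by the identity $\theta^{i+n/2}(w)=\theta^i(w)^{-1}$, which places the inverse partner of any subword of $\theta^i(w)$ inside $\theta^{i+n/2}(w)$. For a subword internal to $\theta^i(u)$ this partner is its mirror image in the $\theta^i(u)^{-1}$-block of $\theta^{i+n/2}(w)$; running over $0\le i<n$, a subword of $u$ contributing to $\bar{\mathcal A}'$, $\bar{\mathcal B}'$ or $\bar{\mathcal Q}$ together with its mirror yields the full shift-orbit of the corresponding edge, namely $x_i-x_{i+a}$, $x_i^{-1}-x_{i+b}^{-1}$ or $x_i-x_{i+q}^{-1}$ for all $0\le i<n$. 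By contrast each junction subword has difference $n/2$, and the junction of $\theta^i(w)$ and the junction of $\theta^{i+n/2}(w)$ form an inverse pair representing a \emph{single} edge; hence these edges occur only for $0\le i<n/2$. A short case analysis of the signs of $\iota(u)$ and $\tau(u)$ then distributes the junction edges between $\bar{\mathcal A}\setminus\bar{\mathcal A}'$ and $\bar{\mathcal B}\setminus\bar{\mathcal B}'$ exactly as prescribed in Definition~\ref{def:ABQnonorientable}, explaining why $\bar{\mathcal A},\bar{\mathcal B}$ record the subwords of the enlarged cyclic subword $v=\theta^{n/2}(\iota(u)^{-1})\,u\,\theta^{n/2}(\tau(u))^{-1}$. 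Summing the local degree contributions confirms that $\Gamma$ is $l(u)$-regular.

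With this edge list in hand, parts (a) and (b) follow along the lines of the orientable Theorem~\ref{thm:stargraphorientable} and \cite[Theorem~3.3]{ChinyereWilliamsGP}: the connected components are the orbits of the subgroup of $\Z_n$ generated by the relevant differences, giving $d$ components in the non-alternating case and $d_{\bar{\mathcal A}}+d_{\bar{\mathcal B}}$ components (split into positive and negative parts) in the alternating case, with the stated vertex sets. The one genuinely new feature is the behaviour of the difference-$n/2$ edges: because they occur only for $0\le i<n/2$ rather than for all $0\le i<n$, each positive component is the halved circulant $\mathrm{circ}'_{n/d_{\bar{\mathcal A}}}(\{a/d_{\bar{\mathcal A}}\ (a\in\bar{\mathcal A})\})$ rather than the full $\mathrm{circ}$, and similarly for the negative components; this is exactly where the prime in $\mathrm{circ}'$ enters, in contrast to the orientable case.

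I expect the main obstacle to be the asymmetric bookkeeping of the difference-$n/2$ edges. Whereas the internal subwords of $u$ pair with their mirrors across a shift by $n/2$ to produce fully $\Z_n$-symmetric edge-orbits, the junction subwords pair with \emph{themselves} under the shift by $n/2$, collapsing their orbit to half size; keeping track of this factor of two, together with the sign-dependent split of the two $n/2$ contributions between $\bar{\mathcal A}$ and $\bar{\mathcal B}$, is the delicate point, and it is precisely this phenomenon that forces the appearance of $\mathrm{circ}'$ (and of the possible double $n/2$-edges) in part (b).
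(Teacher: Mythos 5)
Your proposal is correct and takes essentially the same route as the paper's proof: reduce to the concise truncation $P_{n,n/2}(w)$ via Theorem~\ref{thm:classificationofnonorientablepresentations}, enumerate the length-$2$ cyclic subwords of the relators split into those internal to the $u$- and $\theta^{n/2}(u)^{-1}$-blocks and the two junction subwords, and defer parts (a),(b) to the argument of \cite[Theorem~3.3]{ChinyereWilliamsGP}. Your explicit bookkeeping of inverse pairs via $\theta^{i+n/2}(w)=\theta^i(w)^{-1}$, which halves the orbit of the junction (difference-$n/2$) edges and accounts for the appearance of $\mathrm{circ}'$, is exactly the mechanism underlying the paper's statement that each length-$2$ cyclic subword of the $n/2$ concise relators contributes one edge.
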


\begin{proof}
By Theorem \ref{thm:classificationofnonorientablepresentations} the star graph of $P_n(w)$ is equal to the star graph of $P_{n,n/2}(w)$, which is concise.
Therefore every length two cyclic subword of each relator $\theta^{i}(w)$ ($0\leq i<n/2$) contributes exactly one edge to the star graph $\Gamma$.

Noting that $w$ is of the form $\iota(u)\ldots \tau(u) \cdot \theta^{n/2}(\tau(u)^{-1}) \ldots \theta^{n/2}(\iota (u)^{-1})$, the length two cyclic subwords of the $n/2$ relators $\theta^i(w)$ ($0\leq i<n/2$) consist of the $l(u)-1$ length two subwords of $\theta^i(u)$, the $l(u)-1$ length two subwords of $\theta^{i+n/2}(u^{-1})$, $\theta^i(\tau(u)) \cdot \theta^{i+n/2}(\tau(u)^{-1})$, and of $\theta^{i+n/2}(\iota(u)^{-1}) \cdot \theta^{i}(\iota(u))$.
These contribute edges
$x_i-x_{i+a}$ for each $a \in \bar{\mathcal{A}}'$, $x_i^{-1}-x_{i+b}^{-1}$ for each $b \in \bar{\mathcal{B}}'$, $x_i-x_{i+q}^{-1}$ for each $q \in \bar{\mathcal{Q}}$ ($0\leq i<n$), $\theta^i(\tau(u)) - \theta^{i+n/2}(\tau(u))$, $\theta^{i+n/2}(\iota(u)^{-1}) - \theta^{i}(\iota(u)^{-1})$ ($0\leq i<n/2$).
Equivalently, they contribute the edges $x_i-x_{i+a}$ for each $a \in \bar{\mathcal{A}}'$, $x_i^{-1}-x_{i+b}^{-1}$ for each $b \in \bar{\mathcal{B}}'$, $x_i-x_{i+q}^{-1}$ for each $q \in \bar{\mathcal{Q}}$ ($0\leq i<n$), and the edges $x_i-x_{i+a}$ ($a\in \bar{\mathcal{A}}\backslash \bar{\mathcal{A}}'$), $x_i^{-1}-x_{i+b}^{-1}$ ($b\in \bar{\mathcal{B}}\backslash \bar{\mathcal{B}}'$), ($0\leq i<n/2$).

Thus each positive vertex has the same degree and since in the star graph of any finite presentation vertices corresponding to a generator and its inverse have the same degree (see \cite[Section 2.3.3]{HillPrideVella}), the graph $\Gamma$ is regular. Moreover, the number of edges of the star graph of a concise presentation that has no proper power relators, and where the relators are cyclically reduced, is equal to the sum of the lengths of the relators, so the number of edges is equal to $nl(u)$, and hence $\Gamma$ is $l(u)$-regular.

This completes the proof of first part of the statement. With this description of $\Gamma$ in place, the statements (a),(b) follow as in the proof of \cite[Theorem 3.3]{ChinyereWilliamsGP}.
\end{proof}

\begin{corollary}\label{cor:nonorientablegirthatmost4}
Let $\Gamma$ be the star graph of $P_n(u\theta^{n/2}(u)^{-1})$ where $l(u)\geq 2$ and $u$ is reduced, and let $\epsilon_{\iota},\epsilon_{\tau}\in \{1,-1\}$ be the exponents of $\iota(u),\tau(u)$, respectively. Then $\mathrm{girth}(\Gamma)\leq 4$ and if $\epsilon_{\iota}\epsilon_{\tau}=-1$ then $\mathrm{girth}(\Gamma)=2$.
\end{corollary}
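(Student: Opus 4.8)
The plan is to read every edge-type of $\Gamma$ off Theorem~\ref{thm:stargraphnonorientableNEW} and, in each case, exhibit an explicit short reduced closed path. Throughout set $l=l(u)\ge 2$, and recall from Definition~\ref{def:ABQnonorientable} that the sign pattern of $\iota(u),\tau(u)$ controls how the extra difference $n/2$ is distributed between $\bar{\mathcal{A}}$ and $\bar{\mathcal{B}}$.

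First I would dispose of the case $\epsilon_\iota\epsilon_\tau=-1$, which should give girth exactly $2$. Here Definition~\ref{def:ABQnonorientable} places $n/2$ into $\bar{\mathcal{B}}\setminus\bar{\mathcal{B}}'$ with multiplicity two (when $\iota(u)$ is positive and $\tau(u)$ negative), or into $\bar{\mathcal{A}}\setminus\bar{\mathcal{A}}'$ with multiplicity two (in the opposite case). By the edge description in Theorem~\ref{thm:stargraphnonorientableNEW}, each copy contributes the edges $x_i^{-1}-x_{i+n/2}^{-1}$ (resp.\ $x_i-x_{i+n/2}$) for $0\le i<n/2$, so some pair of vertices is joined by two parallel edges; traversing one and returning along the other is a reduced closed path of length $2$. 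To upgrade this to girth $=2$ I would note that $\Gamma$ has no loops: an $\bar{\mathcal{A}}'$- or $\bar{\mathcal{B}}'$-edge of difference $0$ would force a non-reduced subword $x_ix_i^{-1}$ of $u$, the difference $n/2\ne 0$, and the $\bar{\mathcal{Q}}$-edges join vertices of opposite sign.

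Next, for $\epsilon_\iota\epsilon_\tau=1$ I would use that $n/2\in\bar{\mathcal{A}}$ and $n/2\in\bar{\mathcal{B}}$, so Theorem~\ref{thm:stargraphnonorientableNEW} supplies a perfect matching $\{x_i,x_{i+n/2}\}$ on the positive vertices and $\{x_i^{-1},x_{i+n/2}^{-1}\}$ on the negative vertices (both present for every index). If $u$ is not alternating, I pick $q\in\bar{\mathcal{Q}}$ and take the length-$4$ path
\[
x_0-x_{n/2}-x_{n/2+q}^{-1}-x_{q}^{-1}-x_0,
\]
whose edges are a positive matching edge, a $\bar{\mathcal{Q}}$-edge, a negative matching edge, and a $\bar{\mathcal{Q}}$-edge; the four vertices (two positive, two negative) are distinct, so this is a genuine reduced closed $4$-path. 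If $u$ is alternating, then $\epsilon_\iota=\epsilon_\tau$ forces $l\ge 3$ and hence $\bar{\mathcal{A}}'\ne\emptyset$; choosing $a\in\bar{\mathcal{A}}'$ I would instead use
\[
x_0-x_{a}-x_{a+n/2}-x_{n/2}-x_0,
\]
with edges two $\bar{\mathcal{A}}'$-edges and two positive matching edges. The degenerate possibility $a=n/2$ only helps, since it forces $n/2\in\bar{\mathcal{A}}$ with multiplicity $\ge 2$ and thus a parallel pair, giving girth $2$.

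I expect the difficulties to be bookkeeping rather than conceptual. The two points needing care are the multiplicity of $n/2$ in $\bar{\mathcal{A}},\bar{\mathcal{B}}$ (which is exactly what Definition~\ref{def:ABQnonorientable} records) and the verification that each exhibited path is reduced with pairwise distinct vertices, so that it really certifies the girth bound and does not collapse to a shorter walk. The one step requiring a small argument is that $\bar{\mathcal{A}}'\ne\emptyset$ in the alternating subcase: an alternating word with $\epsilon_\iota=\epsilon_\tau$ has length at least $3$, and any such word contains a positive-to-negative transition $x_ix_{i+a}^{-1}$, furnishing an element of $\bar{\mathcal{A}}'$.
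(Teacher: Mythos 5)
Your proof is correct and follows essentially the same route as the paper's: the doubled difference $n/2$ in $\bar{\mathcal{A}}$ or $\bar{\mathcal{B}}$ gives a parallel pair of edges (hence girth $2$) when $\epsilon_{\iota}\epsilon_{\tau}=-1$, the $4$-cycle $x_0-x_{n/2}-x_{n/2+q}^{-1}-x_q^{-1}-x_0$ handles $\bar{\mathcal{Q}}\neq\emptyset$, and a $4$-cycle among the positive (or negative) vertices handles the alternating case. You are in fact slightly more careful than the published proof --- you exclude loops to get girth exactly $2$, treat the degenerate case $a=n/2$, and attach the doubled $n/2$ to the correct one of $\bar{\mathcal{A}},\bar{\mathcal{B}}$, whereas the ``resp.'' in the first sentence of the paper's proof appears to be reversed.
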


\begin{proof}
If $\epsilon_\iota=1, \epsilon_\tau=-1$ (resp.\,$\epsilon_\iota=-1, \epsilon_\tau=1$) then $x_0-x_{n/2}-x_0$ (resp.\,$x_0^{-1}-x_{n/2}^{-1}-x_0^{-1}$) is a closed path of length 2. If $\bar{\mathcal{Q}}\neq\emptyset$ then there exists $q\in \bar{\mathcal{Q}}$ so $x_0-x_{n/2}-x_{n/2+q}^{-1}-x_q^{-1}-x_0$ is a closed path of length 4. Thus we may assume $\bar{\mathcal{Q}}=\emptyset$ and $\epsilon_\iota=\epsilon_\tau$; that is, $u$ is alternating but not cyclically alternating. Then $|\bar{\mathcal{A}}|\geq 2$, $|\bar{\mathcal{B}}|\geq 2$ so  $\mathrm{girth} (\Gamma^+)\leq 4$ and $\mathrm{girth} (\Gamma^-)\leq 4$ so $\mathrm{girth} (\Gamma)\leq 4$.
\end{proof}

As in \cite[Theorem A]{ChinyereWilliamsGP}, it follows from \cite{HillPrideVella} that if $l(u)\geq 2$ then $P_n(u\theta^{n/2}(u)^{-1})$ does not satisfy the small cancellation condition $T(5)$.

\section{$(m,k,\nu)$-special cyclic presentations}\label{sec:specialpresentationsclassification}

The concise $(m,k,\nu)$-special cyclic presentations were classified in \cite[Sections 5,6]{ChinyereWilliamsGP}. In this section we generalize that classification to (possibly redundant) cyclic presentations. By Corollary \ref{cor:orientable(>2,k,nu)} and Corollary~\ref{cor:nonorientablegirthatmost4}, if a cyclic presentation $P_n(w)$ is $(m,k,\nu)$-special where $m\geq 3$ then $P_n(w)$ is orientable, $w$ is positive or negative, and $m=3$. In Section~\ref{sec:3knuspecialpresentations} (Theorem~\ref{thm:3knuspecial}) we classify the (redundant and concise) orientable $(3,k,\nu)$-special cyclic presentations $P_n(w)$ where $w$ is positive. In Section \ref{sec:orientable2knuspecialpresentations} (Theorems \ref{thm:2knupositive} --\ref{thm:2knunonpositivenonnegativenonalternating}) we classify the (redundant and concise) orientable $(2,k,\nu)$-special cyclic presentations. In Section \ref{sec:nonorientable2knuspecialpresentations} (Theorems~\ref{thm:2knunonoralt}, \ref{thm:2knunonornonalt}) we classify the non-orientable $(2,k,\nu)$-special cyclic presentations. Theorems \ref{thm:2knupositive}--\ref{thm:2knunonornonalt} are analogous to Theorems C,D,E of \cite{ChinyereWilliamsGP}, which deal with concise cyclic presentations. The proofs of those theorems proceed by analysing graphs defined in terms of sets $\mathcal{A},\mathcal{B},\mathcal{Q},\mathcal{Q}^+,\mathcal{Q}^-$. Theorems \ref{thm:3knuspecial}--\ref{thm:2knunonpositivenonnegativenonalternating} (which deal with the orientable case) are re-expressions of the corresponding theorems from \cite{ChinyereWilliamsGP} in terms of the sets $\mathcal{A},\mathcal{B},\mathcal{Q},\mathcal{Q}^+,\mathcal{Q}^-$ of Definition \ref{def:ABQorientable}. (Note that these statements require $l(u)\geq 3$ to ensure that the degrees of vertices are at least 3, as required by Definition \ref{def:mkalphaspecialpres}.) The proofs are similarly analogous to those from \cite{ChinyereWilliamsGP}, so are omitted. A minor exception to this is Theorem \ref{thm:2knupositive}, where in \cite[Theorem C]{ChinyereWilliamsGP} the `non-redundant' (i.e.\,concise) hypothesis provides the further conclusion that $\nu\leq 2$, a condition that does not hold in the redundant case. Theorems \ref{thm:2knunonoralt} and \ref{thm:2knunonornonalt} (which deal with the non-orientable case) and their proofs are new.

It will follow from the results of Sections \ref{sec:3knuspecialpresentations}--\ref{sec:nonorientable2knuspecialpresentations}, together with Theorem \ref{thm:classificationoforientableredundantpresentations28092021}, that if $P_n(w)$ is a redundant $(m,k,\nu)$-special cyclic presentation then $k\geq 6$, so $2/k+1/m<1$ and hence (as described in Section~\ref{sec:specialpresentation}) the corresponding group $G_n(w)$ is non-elementary hyperbolic.

Example~\ref{ex:redundantspecial} exhibits the various situations that arise for redundant $(m,k,\nu)$-special cyclic presentations.

\begin{example}\label{ex:redundantspecial}
\begin{itemize}
  \item[(a)]
      \begin{itemize}
        \item[(i)] $P_{7}(\prod_{i=0}^6 \theta^{4i} (x_0^2x_1))$ is orientable, redundant, $(3,21,1)$-special, and defines the one-relator group $G_{7,1}(\prod_{i=0}^6 \theta^{4i} (x_0^2x_1))$;
        \item[(ii)] $P_{14}(x_0x_1x_{10}x_7x_8x_3)$ is orientable, redundant, and $(3,6,2)$-special;
        \item[(iii)] $P_{21}(\prod_{i=0}^{6}\theta^{3i}(x_0x_{2}x_7))$ is orientable, redundant, and $(3,21,3)$-special.
      \end{itemize}
      In each case, each component of the star graph is the Heawood graph.

  \item[(b)]  $P_9(x_0x_1x_5x_3x_4x_8x_6x_7x_2)$ is orientable, redundant, and $(2,9,3)$-special; \linebreak $P_8(x_0x_1x_3x_6x_2x_3x_5x_0x_4x_5x_7x_2x_6x_7x_1x_4)$ is orientable, redundant, and $(2, 16, 1)$-special. In these cases the defining word is positive.

  \item[(c)] $P_8(x_0x_1^{-1} x_6x_3^{-1} x_4x_5^{-1}  x_2x_7^{-1})$ is orientable, redundant and $(2, 8, 2)$-special, and the defining word is cyclically alternating.

  \item[(d)]  $P_8(x_0x^{-1}_2 x_4x_7x_6x^{-1}_0 x_2x_5x_4x^{-1}_6 x_0x_3x_2x^{-1}_4 x_6x_1)$ is orientable, redundant, and $(2, 16, 2)$-special, and the defining word is non-positive, non-negative, and  non-alternating.

  \item[(e)]  $P_6(x_0x_1^{-1}x_0x_3^{-1}x_4x_3^{-1})$ is non-orientable and $(2,6,2)$-special, and the defining word is cyclically alternating.

  \item[(f)]  $P_{12}(x_0x_2^{-1}x_4x_7x_2x_1 x_7^{-1}x_8^{-1}x_1^{-1}x_{10}^{-1}x_8x_6^{-1})$ is non-orientable and $(2,12,2)$-special and the defining word is non-alternating.
\end{itemize}
\end{example}

\subsection{$(3,k,\nu)$-special cyclic presentations}\label{sec:3knuspecialpresentations}

Theorem \ref{thm:3knuspecial} classifies when a (possibly redundant) orientable cyclic presentation $P_n(w)$ in which $w$ is a positive word, is $(3,k,\nu)$-special and so generalises \cite[Theorem B]{ChinyereWilliamsGP}; its proof is analogous to the proof of that theorem. See Example~\ref{ex:redundantspecial}(a) for examples.

\begin{theorem}\label{thm:3knuspecial}
Suppose $P_n(w)$ is an irreducible, orientable cyclic presentation, where $w$ is a positive word of length $k\geq 3$ that is not a proper power, and let $u$ be the shortest subword of $w$ such that $w=\prod_{i=0}^{n/(n,h)-1} \theta^{i h}(u)$ for any $0\leq h<n$. Then $P_n(w)$ is $(3,k,\nu)$-special if and only if the following hold:
\begin{itemize}
\item[(a)] $n=\nu N$ where $N=l(u)^2-l(u)+1$; and
\item[(b)] $\mathcal{Q}$ is a perfect difference set; and
\item[(c)] $q\equiv q' \bmod \nu$ for each pair $q,q'\in \mathcal{Q}$.
\end{itemize}
\end{theorem}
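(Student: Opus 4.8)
The plan is to mirror the proof of \cite[Theorem~B]{ChinyereWilliamsGP}, replacing the concise star graph description used there by Theorem~\ref{thm:stargraphorientable}(a) and replacing the defining word $w$ throughout by the subword $u$. Write $l=l(u)$ and $N=l^2-l+1$. Since $w$, and hence $u$, is positive, every length~$2$ subword of $u\theta^{h}(\iota(u))$ has the form $x_ix_{i+q}$, so in Definition~\ref{def:ABQorientable} we have $\mathcal{A}=\mathcal{B}=\emptyset$, $\mathcal{Q}=\mathcal{Q}^+$ and $|\mathcal{Q}|=l$; in particular $u$ is not cyclically alternating, so part~(a) of Theorem~\ref{thm:stargraphorientable} governs the star graph $\Gamma$. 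By Corollary~\ref{cor:3knuand2knuorientable}(a), $P_n(w)$ is $(3,k,\nu)$-special if and only if $N=n/\nu$ and every component of $\Gamma$ is the incidence graph of a projective plane of order $l-1$; the whole task is to rewrite these two conditions as (a),(b),(c).

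First I would read off condition~(a). By Theorem~\ref{thm:stargraphorientable}(a), $\Gamma$ has $d=\gcd(n,\,q-q_0\ (q\in\mathcal{Q}))$ components, each with $n/d$ positive and $n/d$ negative vertices. The incidence graph of a projective plane of order $l-1$ has exactly $N$ vertices in each part, so matching vertex counts forces $n/d=N$; combined with $N=n/\nu$ this gives $d=\nu$ and $n=\nu N$, which is~(a).

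Next I would analyse a single component. Reindexing $\Gamma_j$ by $t\mapsto x_{j+td}$ on the positive side and $t\mapsto x_{j+td+q_0}^{-1}$ on the negative side exhibits it as the bipartite circulant on two copies of $\Z_{N}$ with connection multiset $\bar{\mathcal{Q}}=\{\,(q-q_0)/d \bmod N : q\in\mathcal{Q}\,\}$, on which the shift $x_i\mapsto x_{i+d}$ acts as a cyclic group of order $N$ regularly on each part. By the Singer correspondence between cyclic projective planes and planar (perfect) difference sets, which underlies \cite[Theorem~B]{ChinyereWilliamsGP}, the component $\Gamma_j$ is the incidence graph of a projective plane of order $l-1$ if and only if $\bar{\mathcal{Q}}$ is a perfect difference set mod $N$. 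Assembling the three equivalences, $P_n(w)$ is $(3,k,\nu)$-special if and only if $n=\nu N$, $d=\nu$, and $\bar{\mathcal{Q}}$ is a perfect difference set mod $N$.

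It then remains to translate the pair ``$d=\nu$ and $\bar{\mathcal{Q}}$ a perfect difference set'' into (b),(c), and this is the step I expect to be the main obstacle, since the whole equivalence hinges on the coprimality $\gcd(\nu,N)=1$. On the one hand $d=\nu$ records exactly that $\nu\mid(q-q_0)$ for every $q\in\mathcal{Q}$ with no further common factor surviving, which is condition~(c); on the other hand the difference multiset of $\mathcal{Q}$ mod $N$ equals $\nu$ times that of $\bar{\mathcal{Q}}$, so once $\gcd(\nu,N)=1$ multiplication by $\nu$ is a bijection of $\Z_N$ and $\mathcal{Q}$ is a perfect difference set mod $N$ precisely when $\bar{\mathcal{Q}}$ is, giving~(b). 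The delicate point is establishing $\gcd(\nu,N)=1$ itself: in the direction from (b),(c) it is forced, because (c) makes all differences of $\mathcal{Q}$ divisible by $\nu$ while (b) makes them cover every residue mod $N$, which is impossible unless $\gcd(\nu,N)=1$; in the converse direction it must be supplied by the standing irreducibility hypothesis, which rules out the degenerate configurations in which $\bar{\mathcal{Q}}$ is planar yet $\mathcal{Q}$ collapses mod $N$. Keeping the three moduli apart (mod $n$ for the subscripts of $w$, mod $N$ for the difference-set property, and the division by $d$ passing from $\Gamma$ to one component) is the only genuinely new bookkeeping over the concise case; with it settled, the remainder is a direct transcription of the argument of \cite[Theorem~B]{ChinyereWilliamsGP} with $u$ in the role of $w$.
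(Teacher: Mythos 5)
Your overall strategy is exactly the one the paper intends (the paper omits the proof, saying only that it is analogous to that of \cite[Theorem B]{ChinyereWilliamsGP}), and your backward direction is essentially sound: (b) and (c) together force $\gcd(\nu,N)=1$, hence $d=\nu$, and each component of $\Gamma$ is then the bipartite circulant on two copies of $\Z_N$ with connection set $\{(q-q_0)/\nu \bmod N\}$, which is a planar difference set precisely when $\mathcal{Q}$ is. (A minor omission: (a)--(c) can hold with $l(u)=2$, where the components are $6$-cycles of degree $2$ and the presentation is not special, so an explicit $l(u)\geq 3$ must be carried along, as it is in the statement of Theorem~\ref{thm:2knupositive}.)

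The genuine gap is at the point you yourself flagged: in the forward direction you need $\gcd(\nu,N)=1$ to pass from ``$\{(q-q_0)/\nu \bmod N\}$ is a perfect difference set'' to ``$\mathcal{Q}$ is a perfect difference set'', and you assert without argument that irreducibility supplies this. In the concise case it does, but the mechanism is the congruence (\ref{eq:A+B+Q+-Q-}) with $h=0$: if a prime $p$ divides $\gcd(\nu,N)$ then $\sum_{q\in\mathcal{Q}}q\equiv l(u)q_0\equiv 0\bmod p$, and $p\nmid l(u)$ (since $p\mid N=l(u)^2-l(u)+1$), so $p$ divides $q_0$ and hence every element of $\mathcal{Q}$ as well as $n$, contradicting irreducibility. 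In the redundant case the congruence reads $\sum_{q\in\mathcal{Q}}q\equiv h\bmod n$ and this argument collapses whenever $p\nmid h$; indeed the implication is false. Take $n=49$, $u=x_0x_1x_9$, $h=31$, $w=\prod_{i=0}^{48}\theta^{31i}(u)$. Then $w$ is positive, cyclically reduced, not a proper power, orientable, and irreducible in the usual sense ($\gcd(49,1,8,22)=1$), with $\mathcal{Q}=\{1,8,22\}$. Here $d=\gcd(49,7,21)=7$, and each of the $7$ components of $\Gamma$ is the bipartite circulant on $\Z_7\sqcup\Z_7$ with connection set $\{0,1,3\}$, i.e.\ the Heawood graph, so $P_{49}(w)$ is $(3,147,7)$-special; yet every difference of elements of $\mathcal{Q}$ is divisible by $7=N$, so $\mathcal{Q}$ is not a perfect difference set and (b) fails. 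Your forward direction therefore cannot be completed as written: what your circulant analysis actually proves is the statement with (b) replaced by ``$\{(q-q_0)/\nu \bmod N : q\in\mathcal{Q}\}$ is a perfect difference set'' (equivalently, the stated (b) under the additional hypothesis $\gcd(\nu,N)=1$), and the discrepancy should be flagged rather than attributed to irreducibility.
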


We remark that a consequence of conditions (a),(c) of Theorem \ref{thm:3knuspecial} is that $\nu$ divides $k$.

\subsection{Orientable $(2,k,\nu)$-special cyclic presentations}\label{sec:orientable2knuspecialpresentations}

In this section we classify the orientable $(2,k,\nu)$-special cyclic presentations $P_n\left( \prod_{i=0}^{n/(n,h)-1} \theta^{i h}(u)\right)$ (where we may assume that $u$ is the shortest possible). We consider the cases $u$ positive, cyclically alternating, and non-positive, non-negative, and not cyclically alternating separately.

\subsubsection{$u$ positive}\label{sec:positivew}

Theorem \ref{thm:2knupositive} classifies when a (redundant or concise) orientable cyclic presentation $P_n\left( \prod_{i=0}^{n/(n,h)-1} \theta^{i h}(u)\right)$ in which $u$ is a positive word, is $(2,k,\nu)$-special
and so generalises \cite[Theorem C]{ChinyereWilliamsGP}. Note that, unlike in \cite[Theorem C]{ChinyereWilliamsGP} $\nu$ is no longer limited to the values $1,2$. The proof of Theorem \ref{thm:2knupositive} is analogous to that of \cite[Theorem C]{ChinyereWilliamsGP} except that the argument that leads to the conclusion $\nu\in \{1,2\}$ needs to be removed. See Example~\ref{ex:redundantspecial}(b) for an example.

\begin{theorem}\label{thm:2knupositive}
Let $P_n(w)$ be an irreducible, orientable cyclic presentation, where $w$ has length $k\geq 4$ that is not a proper power, and let $u$ be the shortest subword of $w$ such that $w=\prod_{i=0}^{n/(n,h)-1} \theta^{i h}(u)$ for any $0\leq h<n$, and suppose that $u$ is positive. Then $P_n(w)$ is $(2,k,\nu)$-special if and only if $n=\nu l(u)$, $l(u)\geq 3$, $\mathcal{Q}=\{q_0,\nu+q_0,2\nu+q_0,\ldots , (l(u)-1)\nu+q_0\}$ for some $0\leq q_0<n$ such that $\mathrm{gcd}(q_0,\nu)=1$, in which case each component of $\Gamma$ is isomorphic to $K_{l(u),l(u)}$.
\end{theorem}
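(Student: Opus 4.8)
The plan is to read off the structure of the star graph $\Gamma$ from Theorem~\ref{thm:stargraphorientable} and then apply the reformulation of speciality in Corollary~\ref{cor:3knuand2knuorientable}(b), following the argument of the concise case \cite[Theorem~C]{ChinyereWilliamsGP} but deleting the step that restricts $\nu$. First I would record the simplifications forced by $u$ being positive: in Definition~\ref{def:ABQorientable} we have $\mathcal{A}=\mathcal{B}=\mathcal{Q}^-=\emptyset$ and $\mathcal{Q}=\mathcal{Q}^+$, the word $u$ is not cyclically alternating, and $|\mathcal{Q}|=l(u)$ (the $l(u)$ length-two subwords of $u\theta^{h}(\iota(u))$ are all positive). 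Hence part~(a) of Theorem~\ref{thm:stargraphorientable} applies: $\Gamma$ is bipartite on $\{x_i\}\cup\{x_i^{-1}\}$, its only edges are $x_i-x_{i+q}^{-1}$ ($q\in\mathcal{Q}$, $0\le i<n$), and it has $d=\gcd(n,\,q-q_0\ (q\in\mathcal{Q}))$ components, each with $n/d$ positive and $n/d$ negative vertices. The pivot throughout is Corollary~\ref{cor:3knuand2knuorientable}(b): $P_n(w)$ is $(2,k,\nu)$-special precisely when $n=\nu\,l(u)$ and every component is the complete bipartite graph $K_{l(u),l(u)}$.

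For the forward implication, suppose $P_n(w)$ is $(2,k,\nu)$-special. By Corollary~\ref{cor:3knuand2knuorientable}(b) we get $n=\nu\,l(u)$ and that each component is $K_{l(u),l(u)}$; in particular there are $\nu$ components, so $d=\nu$, and the degree condition of Definition~\ref{def:mkalphaspecialpres} forces $l(u)=n/\nu\ge 3$. To identify $\mathcal{Q}$ I would exploit completeness of a single component: the vertex $x_0$ lies in the component $\Gamma_0$ whose negative part is $\{x_{t\nu+q_0}^{-1}:0\le t<l(u)\}$, while its neighbours are $\{x_q^{-1}:q\in\mathcal{Q}\}$; since $\Gamma_0\cong K_{l(u),l(u)}$ these coincide, and as this is a simple graph the $l(u)$ values in $\mathcal{Q}$ are distinct and run over all residues $\equiv q_0\bmod\nu$. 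Thus $\mathcal{Q}=\{q_0,\nu+q_0,\ldots,(l(u)-1)\nu+q_0\}$. Finally, irreducibility of $P_n(w)$ (that the subscripts of $w$ do not all lie in a single coset of a proper subgroup $d\Z_n$, equivalently that the subscript differences generate $\Z_n$) means $\gcd(n,\,q\ (q\in\mathcal{Q}))=1$; since this gcd equals $\gcd(n,q_0,\nu)=\gcd(q_0,\nu)$, we obtain $\gcd(q_0,\nu)=1$.

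For the converse I would assume $n=\nu\,l(u)$, $l(u)\ge 3$ and $\mathcal{Q}=\{q_0+t\nu:0\le t<l(u)\}$. Then $d=\gcd(n,\,q-q_0\ (q\in\mathcal{Q}))=\gcd(\nu\,l(u),\nu)=\nu$, so $\Gamma$ has $\nu$ components, each with $l(u)$ vertices in each part. Fixing one, each positive vertex $x_i$ has neighbours $\{x_{i+q_0+t\nu}^{-1}:0\le t<l(u)\}$, which run over all $n/\nu=l(u)$ residues $\equiv i+q_0\bmod\nu$, that is, over the entire negative part of the component containing $x_i$; hence every component is $K_{l(u),l(u)}$, and, all being isomorphic to the same graph, they are pairwise isomorphic. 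Together with $n=\nu\,l(u)$, Corollary~\ref{cor:3knuand2knuorientable}(b) then gives that $P_n(w)$ is $(2,k,\nu)$-special, the hypotheses $l(u)\ge 3$ and $k\ge 4$ supplying the degree and length requirements of Definition~\ref{def:mkalphaspecialpres}.

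The one step that genuinely differs from the concise case, and the point I would be most careful about, is the admissibility of arbitrary $\nu$. In the concise setting $u=w$, so $l(u)=k$ and the cyclic differences of $w$ sum to zero; thus \eqref{eq:A+B+Q+-Q-} reads $\sum_{q\in\mathcal{Q}}q\equiv 0\bmod n$, and feeding in $\mathcal{Q}=\{q_0+t\nu\}$ and dividing by $k$ yields a congruence on $q_0$ modulo $\nu$ which, combined with $\gcd(q_0,\nu)=1$, collapses $\nu$ to $\{1,2\}$. In the redundant setting \eqref{eq:A+B+Q+-Q-} only gives $\sum_{q\in\mathcal{Q}}q\equiv h\bmod n$ with $h$ free, so this restriction evaporates. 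I would therefore simply omit that step and instead note, via the identification of irreducibility with $\gcd(q_0,\nu)=1$, that for every $\nu$ with $\nu\mid n$ and $n/\nu\ge 3$ the choice $q_0=1$ already yields a valid configuration, so all such $\nu$ are realised.
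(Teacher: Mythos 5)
Your proposal is correct and follows essentially the route the paper intends: the paper omits the proof, stating only that it is analogous to Theorem~C of \cite{ChinyereWilliamsGP} with the step forcing $\nu\in\{1,2\}$ deleted, and your reconstruction via Theorem~\ref{thm:stargraphorientable}(a) and Corollary~\ref{cor:3knuand2knuorientable}(b) is exactly that argument, including the correct diagnosis that the restriction on $\nu$ came from the congruence \eqref{eq:A+B+Q+-Q-} having right-hand side $0$ in the concise case but $h$ here. Your identification of irreducibility with $\gcd(n,q\ (q\in\mathcal{Q}))=1=\gcd(q_0,\nu)$ also matches the paper's stated conclusion.
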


In constructing examples it is useful to note that by (\ref{eq:A+B+Q+-Q-}) the conditions of Theorem \ref{thm:2knupositive} imply $l(u)q_0+[l(u)(l(u)-1)/2]\nu \equiv h\bmod n$.

\subsubsection{$u$ cyclically alternating}\label{sec:alternatingw}

Theorem \ref{thm:2knualternating}  classifies when a (redundant or concise) orientable cyclic presentation $P_n\left( \prod_{i=0}^{n/(n,h)-1} \theta^{i h}(u)\right)$ in which $u$ is a cyclically alternating word, is $(2,k,\nu)$-special and so generalises \cite[Theorem D]{ChinyereWilliamsGP}; its proof is analogous to the proof of that theorem. See Example~\ref{ex:redundantspecial}(c) for an example.

\begin{theorem}\label{thm:2knualternating}
Let $P_n(w)$ be an irreducible, orientable cyclic presentation, where $w$ has length $k\geq 4$ and is not a proper power, and let $u$ be the shortest subword of $w$ such that $w=\prod_{i=0}^{n/(n,h)-1} \theta^{i h}(u)$ for any $0\leq h<n$, and suppose that $u$ is cyclically alternating. Then $P_n(w)$ is $(2,k,\nu)$-special if and only if $n=2l(u)$, $l(u)\geq 3$, $\nu=2$, and $\mathcal{A},\mathcal{B}$ are each sets of the form $\{ \pm 1, \pm 3, \ldots ,\pm (n/2-1)\}$.
\end{theorem}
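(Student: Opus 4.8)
The plan is to reduce the statement to a purely graph-theoretic condition via Corollary~\ref{cor:3knuand2knuorientable}(b) and then read off the combinatorics from Theorem~\ref{thm:stargraphorientable}(b). By Corollary~\ref{cor:3knuand2knuorientable}(b), $P_n(w)$ is $(2,k,\nu)$-special if and only if $l(u)=n/\nu$ and every component of the star graph $\Gamma$ is the complete bipartite graph $K_{l(u),l(u)}$. Since $u$ is cyclically alternating we have $\mathcal{Q}=\emptyset$, so Theorem~\ref{thm:stargraphorientable}(b) applies: $\Gamma$ has exactly $d_\mathcal{A}+d_\mathcal{B}$ components, the positive ones isomorphic to $\mathrm{circ}_{n/d_\mathcal{A}}(\{a/d_\mathcal{A}\ (a\in\mathcal{A})\})$ and the negative ones to $\mathrm{circ}_{n/d_\mathcal{B}}(\{b/d_\mathcal{B}\ (b\in\mathcal{B})\})$. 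I would also record at the outset that, $u$ being cyclically alternating, $l(u)$ is even and $|\mathcal{A}|=|\mathcal{B}|=l(u)/2$, since the $l(u)$ length-$2$ subwords of $u\theta^{h}(\iota(u))$ alternate in type.

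For the forward direction I would argue as follows. If $P_n(w)$ is $(2,k,\nu)$-special then each of the $d_\mathcal{A}+d_\mathcal{B}$ components is isomorphic to $K_{l(u),l(u)}$, which has $2l(u)$ vertices; comparing vertex counts gives $n/d_\mathcal{A}=n/d_\mathcal{B}=2l(u)$, whence $d_\mathcal{A}=d_\mathcal{B}=n/(2l(u))$ and $\nu=d_\mathcal{A}+d_\mathcal{B}=n/l(u)$ (consistent with $l(u)=n/\nu$). The decisive step is to invoke the irreducibility hypothesis, in the form used in \cite{ChinyereWilliamsGP}, namely that $\gcd(d_\mathcal{A},d_\mathcal{B})=1$ (equivalently, the subscripts of $w$ do not all lie in a proper coset of a subgroup of $\Z_n$); combined with $d_\mathcal{A}=d_\mathcal{B}$ this forces $d_\mathcal{A}=d_\mathcal{B}=1$, so $n=2l(u)$ and $\nu=2$. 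Finally, with $d_\mathcal{A}=1$ each positive component is $\mathrm{circ}_n(\mathcal{A})$, and the classification of circulant complete bipartite graphs recorded in Section~\ref{sec:stargraph} (which requires $n/2$ even, automatic here as $l(u)$ is even, and the connection set to symmetrise to $\{\pm1,\pm3,\ldots,\pm(n/2-1)\}$) pins $\mathcal{A}$, and likewise $\mathcal{B}$, to the stated form; note $|\mathcal{A}|=l(u)/2=n/4$ is exactly the number of representatives needed.

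For the converse I would run this in reverse: assuming $n=2l(u)$, $l(u)\geq 3$, $\nu=2$ and $\mathcal{A},\mathcal{B}$ of the stated form, the presence of a representative of $\{1,-1\}$ among the differences gives $d_\mathcal{A}=d_\mathcal{B}=1$, so $\Gamma$ has $d_\mathcal{A}+d_\mathcal{B}=2=\nu$ components; each is $\mathrm{circ}_n(\mathcal{A})$ or $\mathrm{circ}_n(\mathcal{B})$, which by the same circulant classification is $K_{l(u),l(u)}$; and $l(u)=n/2=n/\nu$, so Corollary~\ref{cor:3knuand2knuorientable}(b) gives that $P_n(w)$ is $(2,k,\nu)$-special. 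The condition $l(u)\geq 3$ supplies the degree-at-least-$3$ requirement of Definition~\ref{def:mkalphaspecialpres}.

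The step I expect to be the main obstacle — or at least the one demanding the most care — is the use of irreducibility to force $d_\mathcal{A}=d_\mathcal{B}=1$, and hence $\nu=2$: without it one can build \emph{reducible} examples (with all subscripts of $w$ lying in a proper subgroup-coset of $\Z_n$) that are orientable, cyclically alternating and $(2,k,2d)$-special for $d>1$, so the bound $\nu=2$ genuinely depends on excluding decomposable presentations. A secondary point of care is the matching between the multiset $\mathcal{A}$ and the symmetric set $\{\pm1,\pm3,\ldots,\pm(n/2-1)\}$: since $\mathrm{circ}_n(\mathcal{A})$ depends on $\mathcal{A}$ only through $\mathcal{A}\cup(-\mathcal{A})$, the displayed form should be understood as the condition that $\mathcal{A}$ contains exactly one representative of each pair $\{j,-j\}$ with $j$ odd and $1\leq j\leq n/2-1$, as illustrated by $\mathcal{A}=\{1,5\}$ in Example~\ref{ex:redundantspecial}(c). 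Beyond these two points the argument is a direct transcription of the proof of \cite[Theorem~D]{ChinyereWilliamsGP}, with ``cyclic subword of $w$'' replaced throughout by ``subword of $u\theta^{h}(\iota(u))$''.
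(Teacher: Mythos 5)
Your argument is correct and is essentially the proof the paper intends: the paper omits it, stating only that it is analogous to \cite[Theorem D]{ChinyereWilliamsGP}, and your reconstruction via Corollary~\ref{cor:3knuand2knuorientable}(b) and Theorem~\ref{thm:stargraphorientable}(b), with irreducibility used as $\gcd(d_\mathcal{A},d_\mathcal{B})=1$ exactly as in the paper's own proofs of Theorems~\ref{thm:2knunonoralt} and~\ref{thm:2knunonornonalt}, is that intended argument. Your reading of the $\{\pm 1,\pm 3,\ldots\}$ notation (one representative per pair, so $|\mathcal{A}|=n/4$) and your remark on why irreducibility is indispensable are both correct.
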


\subsubsection{$u$ non-positive, non-negative, not cyclically alternating}\label{sec:nonposnonnegnonaltw}

Theorem \ref{thm:2knunonpositivenonnegativenonalternating}  classifies when a (redundant or concise) orientable cyclic presentation presentation \linebreak $P_n\left( \prod_{i=0}^{n/(n,h)-1} \theta^{i h}(u)\right)$ in which $u$ is a non-positive, non-negative, and not cyclically alternating word, is $(2,k,\nu)$-special and so generalises \cite[Theorem E]{ChinyereWilliamsGP}; its proof is analogous to the proof of that theorem. See Example~\ref{ex:redundantspecial}(d) for an example.

\begin{theorem}\label{thm:2knunonpositivenonnegativenonalternating}
Let $P_n(w)$ be an irreducible, orientable cyclic presentation, where $w$ has length $k\geq 4$ and is not a proper power, and let $u$ be the shortest subword of $w$ such that $w=\prod_{i=0}^{n/(n,h)-1} \theta^{i h}(u)$ for any $0\leq h<n$, and suppose that $u$ is non-positive, non-negative, and not cyclically alternating. Then $P_n(w)$ is $(2,k,\nu)$-special if and only if the following hold:
\begin{itemize}
  \item[(a)] $n=\nu l(u)$ and $l(u)$ is divisible by 4;
  \item[(b)] $\mathcal{A},\mathcal{B}$ are each sets of the form $\{\pm \nu, \pm 3\nu, \ldots , \pm ((n/2\nu)-1)\nu \}$;
  \item[(c)] $\mathcal{Q}^+\cap\mathcal{Q}^-=\emptyset$ and there exists some $0\leq q_0<n$ with $(q_0,\nu)=1$ such that $\mathcal{Q}= \{ q_0,q_0+2\nu,\ldots , q_0+(n/\nu-2)\nu\}$.
\end{itemize}
\end{theorem}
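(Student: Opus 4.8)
The plan is to reduce Theorem~\ref{thm:2knunonpositivenonnegativenonalternating} to the corresponding concise result, \cite[Theorem E]{ChinyereWilliamsGP}, by exploiting the machinery already assembled in the excerpt. By Corollary~\ref{cor:conciserefinementorientable}, since $u$ is chosen to be the shortest subword with $w=\prod_{i=0}^{n/(n,h)-1}\theta^{ih}(u)$, the truncation $P_{n,(n,h)}(w)$ is a \emph{concise} refinement of $P_n(w)$, and its star graph equals that of $P_n(w)$ by the invariance of star graphs under removal of redundant relators noted in Section~\ref{sec:stargraph}. Thus the $(2,k,\nu)$-special property of $P_n(w)$ depends only on the star graph $\Gamma$, which is completely described in Theorem~\ref{thm:stargraphorientable}. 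The first step, therefore, is to observe that being $(2,k,\nu)$-special is equivalent to the combinatorial condition $n=l(u)\nu$ together with each star graph component being isomorphic to $K_{l(u),l(u)}$, via Corollary~\ref{cor:3knuand2knuorientable}(b).

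Next I would translate the $K_{l(u),l(u)}$ requirement into conditions on the multisets $\mathcal{A},\mathcal{B},\mathcal{Q},\mathcal{Q}^+,\mathcal{Q}^-$ of Definition~\ref{def:ABQorientable}. Since $u$ is non-positive, non-negative and not cyclically alternating, we have $\mathcal{Q}\neq\emptyset$, so by Theorem~\ref{thm:stargraphorientable}(a) the graph $\Gamma$ splits into $d$ components, each bipartite with $n/d$ positive and $n/d$ negative vertices. For each component to be complete bipartite $K_{l(u),l(u)}$ we need $n/d=l(u)$, hence $d=\nu$, and we need every positive vertex joined to every negative vertex by exactly one edge; the edges are precisely $x_i-x_{i+q}^{-1}$ for $q\in\mathcal{Q}$, so the set $\{q \bmod d : q\in\mathcal{Q}\}$ together with the $\mathcal{A}$- and $\mathcal{B}$-edges must realise each of the $l(u)$ cross-adjacencies exactly once within a component. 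This is where the recorded congruence~(\ref{eq:A+B+Q+-Q-}), $\sum_{a\in\mathcal{A}}a+\sum_{b\in\mathcal{B}}b+\sum_{q\in\mathcal{Q}^+}q-\sum_{q\in\mathcal{Q}^-}q\equiv h\bmod n$, will be used to pin down the residues. The structure of $\mathcal{A},\mathcal{B}$ as $\{\pm\nu,\pm3\nu,\ldots\}$ arises from forcing the induced positive (resp.\ negative) subgraph to be a complete bipartite graph under the circulant description of Theorem~\ref{thm:stargraphorientable}, exactly paralleling the $K_{n/2,n/2}$ characterisation of $\mathrm{circ}_n(A)$ recorded at the end of Section~\ref{sec:stargraph}; the $\mathcal{Q}^+\cap\mathcal{Q}^-=\emptyset$ and arithmetic-progression conditions on $\mathcal{Q}$ arise from the no-multiple-edge requirement of $K_{l(u),l(u)}$.

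The honest approach, as the paper itself signals, is that this is a re-expression of \cite[Theorem E]{ChinyereWilliamsGP} in the present notation, so the cleanest route is to verify that the multiset relations between $\mathcal{A}_P,\mathcal{B}_P,\mathcal{Q}_P$ (for the concise truncation $P$) and the sets of \cite{ChinyereWilliamsGP} are the identity, as already noted after Definition~\ref{def:ABQorientable}: when $P_n(w)$ is concise the sets $\mathcal{A},\mathcal{B},\mathcal{Q},\mathcal{Q}^+,\mathcal{Q}^-$ coincide with those of \cite{ChinyereWilliamsGP}. In the genuinely redundant case ($0<h<n$), the subword controlling the multisets is $u\theta^h(\iota(u))$ rather than a cyclic subword of $w$, but Theorem~\ref{thm:stargraphorientable} shows the resulting star graph has an identical circulant/component description, so the combinatorial conditions (a)--(c) are forced by precisely the same argument. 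I would therefore state that the proof transcribes that of \cite[Theorem E]{ChinyereWilliamsGP} verbatim after this notational identification, invoking Theorem~\ref{thm:stargraphorientable} in place of \cite[Theorem 3.3]{ChinyereWilliamsGP}.

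The main obstacle I anticipate is the bookkeeping in the redundant case: one must check that the single edge-substitution described in the proof of Theorem~\ref{thm:stargraphorientable} (replacing $\theta^i(\tau(u))-\theta^i(\iota(u))^{-1}$ by $\theta^i(\tau(u))-\theta^{i+h}(\iota(u))^{-1}$) does not disturb the complete-bipartite structure of any component, equivalently that the ``wrap-around'' difference contributed by $\tau(u)\theta^h(\iota(u))$ lands in the correct residue class mod $\nu$ and does not create a repeated cross-edge. Verifying that this boundary contribution is consistent with conditions (b),(c) — rather than producing an extra or missing adjacency — is the delicate point; everything else is a direct translation of the concise argument, which is why the paper omits the full details.
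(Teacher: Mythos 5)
Your proposal matches the paper's approach exactly: the paper omits this proof, stating only that it is a re-expression of \cite[Theorem E]{ChinyereWilliamsGP} whose proof carries over verbatim once Theorem~\ref{thm:stargraphorientable} is substituted for the concise star-graph description, which is precisely the reduction you describe (including the identification of the boundary edge-substitution as the only point needing care). One small caution: in a $(2,k,\nu)$-special component the bipartition is \emph{not} positive versus negative vertices --- each part contains $l(u)/2$ positive and $l(u)/2$ negative vertices, which is where $4\mid l(u)$ comes from --- so your interim claim that every positive vertex is joined to every negative vertex should be discarded in favour of the corrected circulant description you give immediately afterwards.
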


\subsection{Non-orientable $(2,k,\nu)$-special cyclic presentations}\label{sec:nonorientable2knuspecialpresentations}

In this section we classify the $(2,k,\nu)$-special cyclic presentations $P_n(w)$ where $w=u\theta^{n/2}(u)^{-1}$.

\subsubsection{$w$ alternating}\label{sec:alternatingu}

Theorem \ref{thm:2knunonoralt} classifies when a cyclic presentation $P_n(w)$, where $w=u\theta^{n/2}(u)^{-1}$ is an alternating word, is $(2,k,\nu)$-special and so extends \cite[Theorem D]{ChinyereWilliamsGP}.  See Example~\ref{ex:redundantspecial}(e) for an example.

\begin{theorem}\label{thm:2knunonoralt}
Let $w=u\theta^{n/2}(u)^{-1}$ be a word of length $k$ that is alternating and not a proper power and suppose that $P_n(w)$ is irreducible. Then $P_n(w)$ is $(2,k,\nu)$-special if and only if  $l(u)=n/2\geq 3$ is odd, $\nu=2$, and $\bar{\mathcal{A}},\bar{\mathcal{B}}$ are each sets of the form $\{ \pm 1, \pm 3, \ldots , \pm (n-4)/2, n/2\}$.
\end{theorem}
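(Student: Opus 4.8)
The plan is to read everything off the star graph via Theorem~\ref{thm:stargraphnonorientableNEW}. Since $w=u\theta^{n/2}(u)^{-1}$ is alternating we have $\bar{\mathcal{Q}}=\emptyset$, so part~(b) of that theorem applies: the star graph $\Gamma$ splits into $d_{\bar{\mathcal{A}}}$ ``positive'' components, each isomorphic to $\mathrm{circ}'_{n/d_{\bar{\mathcal{A}}}}(\{a/d_{\bar{\mathcal{A}}}\ (a\in\bar{\mathcal{A}})\})$, together with $d_{\bar{\mathcal{B}}}$ ``negative'' components, each isomorphic to $\mathrm{circ}'_{n/d_{\bar{\mathcal{B}}}}(\{b/d_{\bar{\mathcal{B}}}\ (b\in\bar{\mathcal{B}})\})$, so that $\nu=d_{\bar{\mathcal{A}}}+d_{\bar{\mathcal{B}}}$. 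Directly from Definition~\ref{def:mkalphaspecialpres} with $m=2$, each component of a $(2,k,\nu)$-special presentation is a connected bipartite graph of diameter~$2$ and girth~$4$ with minimum degree $\geq 3$; since $\Gamma$ is $l(u)$-regular such a component is necessarily the complete bipartite graph $K_{l(u),l(u)}$ with $l(u)\geq 3$, which is exactly the incidence graph of a generalized $2$-gon. Thus $P_n(w)$ is $(2,k,\nu)$-special if and only if every component above is isomorphic to $K_{l(u),l(u)}$ with $l(u)\geq 3$, and the whole argument reduces to the circulant criterion recorded in Section~\ref{sec:stargraph}.

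For the forward direction I would first note that a positive component $\mathrm{circ}'_{n/d_{\bar{\mathcal{A}}}}(\cdots)\cong K_{l(u),l(u)}$ has $2l(u)$ vertices, so $n/d_{\bar{\mathcal{A}}}=2l(u)$ and hence $d_{\bar{\mathcal{A}}}=n/(2l(u))$; the same applied to a negative component gives $d_{\bar{\mathcal{B}}}=n/(2l(u))$, so $d_{\bar{\mathcal{A}}}=d_{\bar{\mathcal{B}}}$. The relevant half of the criterion (namely $\mathrm{circ}'_m(A)=K_{m/2,m/2}$ iff $m/2$ is odd and $A=\{\pm1,\pm3,\ldots,\pm(m/2-2),m/2\}$) shows that $l(u)=n/(2d_{\bar{\mathcal{A}}})$ is odd and, crucially, that the half-connection $m/2=l(u)$ must occur in the connection set, i.e.\ $n/2\in\bar{\mathcal{A}}$ and $n/2\in\bar{\mathcal{B}}$. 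Inspecting Definition~\ref{def:ABQnonorientable}, $n/2$ lies in both $\bar{\mathcal{A}}$ and $\bar{\mathcal{B}}$ precisely when $\iota(u)$ and $\tau(u)$ have the same sign; in each of the other two cases $n/2$ occurs (with multiplicity two) in exactly one of $\bar{\mathcal{A}},\bar{\mathcal{B}}$ and in neither of the other, which would on the one hand force the opposite parity of $l(u)$ via the $\mathrm{circ}$ (rather than $\mathrm{circ}'$) criterion and on the other hand double the $n/2$-matching edges, contradicting the simplicity of $K_{l(u),l(u)}$. Hence $\iota(u),\tau(u)$ share a sign, $l(u)$ is odd, and both connection sets have the shape $\{\pm1,\pm3,\ldots,\pm(l(u)-2),l(u)\}$.

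It then remains to pin down $\nu$. Since $d_{\bar{\mathcal{A}}}=d_{\bar{\mathcal{B}}}$ and $\gcd(d_{\bar{\mathcal{A}}},d_{\bar{\mathcal{B}}})=\gcd(n,\bar{\mathcal{A}}\cup\bar{\mathcal{B}})$, the irreducibility of $P_n(w)$ (which gives $\gcd(n,\bar{\mathcal{A}}\cup\bar{\mathcal{B}})=1$) forces $d_{\bar{\mathcal{A}}}=d_{\bar{\mathcal{B}}}=1$. Consequently $l(u)=n/2$ and $\nu=d_{\bar{\mathcal{A}}}+d_{\bar{\mathcal{B}}}=2$, and $\bar{\mathcal{A}},\bar{\mathcal{B}}$ are each of the form $\{\pm1,\pm3,\ldots,\pm(n-4)/2,n/2\}$ (using $(n-4)/2=l(u)-2$), as required. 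For the converse I would run this in reverse: assuming $l(u)=n/2\geq3$ odd, $\nu=2$, and $\bar{\mathcal{A}},\bar{\mathcal{B}}$ of the stated form, we get $d_{\bar{\mathcal{A}}}=d_{\bar{\mathcal{B}}}=1$ (as $1\in\bar{\mathcal{A}}$), so Theorem~\ref{thm:stargraphnonorientableNEW}(b) gives $\Gamma=\mathrm{circ}'_n(\bar{\mathcal{A}})\sqcup\mathrm{circ}'_n(\bar{\mathcal{B}})$; the circulant criterion (with $n/2$ odd) shows both summands are $K_{n/2,n/2}=K_{l(u),l(u)}$, and since each relator has length $k=2l(u)=n\geq6\geq4$, all conditions of Definition~\ref{def:mkalphaspecialpres} for $m=2$ hold, so $P_n(w)$ is $(2,k,2)$-special.

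The main obstacle I anticipate is the bookkeeping around the two ``seam'' contributions to $\bar{\mathcal{A}},\bar{\mathcal{B}}$ arising from the length-$2$ subwords straddling $u$ and $\theta^{n/2}(u)^{-1}$: these produce the $\mathrm{circ}'$ (rather than $\mathrm{circ}$) graphs and the extra element $n/2$, and they are the only genuine departure from the orientable template of \cite[Theorem~D]{ChinyereWilliamsGP}. Getting the case analysis on the signs of $\iota(u),\tau(u)$ right---showing that only the equal-sign case survives and that it forces $l(u)$ odd---is the delicate point; the remaining circulant and degree-counting arguments carry over from \cite{ChinyereWilliamsGP}.
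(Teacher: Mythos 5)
Your proposal is correct and follows essentially the same route as the paper's proof: both read everything off Theorem~\ref{thm:stargraphnonorientableNEW}(b) together with the circulant criterion recorded in Section~\ref{sec:stargraph}, identify each component of a $(2,k,\nu)$-special star graph as $K_{l(u),l(u)}$, and use irreducibility to force $d_{\bar{\mathcal{A}}}=d_{\bar{\mathcal{B}}}=1$, hence $\nu=2$ and $l(u)=n/2$. The only cosmetic difference is that the paper deduces that $l(u)$ is odd by citing the girth bound of Corollary~\ref{cor:nonorientablegirthatmost4} (which rules out $\epsilon_\iota\epsilon_\tau=-1$), whereas you obtain the same conclusion by a direct case analysis on Definition~\ref{def:ABQnonorientable} against the $\mathrm{circ}$ versus $\mathrm{circ}'$ criterion; both arguments work.
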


\begin{proof}
Let $\Gamma$ be the star graph of $P_n(w)$ and let $\Gamma^+,\Gamma^-$ be the induced subgraphs of $\Gamma$ whose vertices are the positive and negative vertices of $\Gamma$, respectively, and let $d_{\bar{\mathcal{A}}}=\mathrm{gcd} (n, a\ (a \in \bar{\mathcal{A}}))$, $d_{\bar{\mathcal{B}}}=\mathrm{gcd} (n, b\ (b \in \bar{\mathcal{B}}))$. By Theorem \ref{thm:stargraphnonorientableNEW} $\Gamma^+$ has $d_{\bar{\mathcal{A}}}$ components and $\Gamma^-$ has $d_{\bar{\mathcal{B}}}$ components. In this proof we use properties of $\Gamma$ provided by Theorem \ref{thm:stargraphnonorientableNEW} freely without further reference.

Suppose first that $P_n(w)$ is $(2,k,\nu)$-special. By Corollary \ref{cor:nonorientablegirthatmost4} $u$ is not cyclically alternating so $l(u)$ is odd. Then each component of $\Gamma$ is isomorphic, so $d_{\bar{\mathcal{A}}}=d_{\bar{\mathcal{B}}}$, and since $P_n(w)$ is irreducible $1=(d_{\bar{\mathcal{A}}},d_{\bar{\mathcal{B}}})=d_{\bar{\mathcal{A}}}=d_{\bar{\mathcal{B}}}$, so $\Gamma$ has 2 components, so $\nu=2$. Hence $\Gamma^+,\Gamma^-$ are each isomorphic to $K_{n/2,n/2}$, which is $n/2$-regular, so $l(u)=n/2$ and (by the definition of special presentations) $l(u)\geq 3$. Moreover, $K_{n/2,n/2}=\Gamma^+=\mathrm{circ}'_n(\bar{\mathcal{A}})$ and $K_{n/2,n/2}=\Gamma^-=\mathrm{circ}'_n(\bar{\mathcal{B}})$ so $\bar{\mathcal{A}},\bar{\mathcal{B}}$ are each sets of the form $\{\pm 1, \pm 3 , \ldots , \pm (n-4)/2 , n/2\}$, as required.

Conversely, suppose that the given conditions holds. Then $d_{\bar{\mathcal{A}}}=1$ and the edges of $\Gamma^+$ join each even vertex $x_i$ to each odd vertex $x_j$, and so $\Gamma^+$ is the complete bipartite graph $K_{n/2,n/2}$. Similarly $\Gamma^-$ is the complete bipartite $K_{n/2,n/2}$. Since the degree of each vertex is $l(u)\geq 3$, the presentation $P_n(w)$ is $(2,k,\nu)$-special.
\end{proof}

\subsubsection{$w$ non-alternating}\label{sec:nonalternatingu}

Theorem \ref{thm:2knunonornonalt}  classifies when a cyclic presentation $P_n(u\theta^{n/2}(u)^{-1})$ in which $u$ is a non-alternating word, is $(2,k,\nu)$-special and so extends \cite[Theorem E]{ChinyereWilliamsGP}. See Example~\ref{ex:redundantspecial}(f) for an example.

\begin{theorem}\label{thm:2knunonornonalt}
Let $w=u\theta^{n/2}(u)^{-1}$ be a word of length $k$ that is not a proper power, where $u$ is a non-alternating word of length at least 3, and suppose that $P_n(w)$ is irreducible. Then $P_n(w)$ is $(2,k,\nu)$-special if and only if the following hold:
\begin{itemize}
  \item[(a)] $n=\nu l(u)$, $l(u)\equiv 2 \bmod 4$, $l(u)\geq 6$;

  \item[(b)] $\bar{\mathcal{A}},\bar{\mathcal{B}}$ are each sets of the form $\{ \pm \nu,\pm 3\nu, \ldots , \pm (n/(2\nu)-2)\nu , n/2\}$;

  \item[(c)] there exists some $0<q_0<n$ with $(q_0,\nu)=1$ such that $\bar{\mathcal{Q}}=\{q_0,q_0+2\nu,\ldots, q_0+(n/\nu-2)\nu\}$.
\end{itemize}
\end{theorem}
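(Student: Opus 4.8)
The plan is to read the star graph $\Gamma$ of $P_n(w)$ directly from Theorem~\ref{thm:stargraphnonorientableNEW} and to determine exactly when it is a disjoint union of $\nu$ copies of a complete bipartite graph. Since $u$ is non-alternating we have $\bar{\mathcal{Q}}\neq\emptyset$, so part~(a) of Theorem~\ref{thm:stargraphnonorientableNEW} applies: $\Gamma$ is $l(u)$-regular and has $d$ connected components, each carrying $n/d$ positive and $n/d$ negative vertices. The first step is to note that, because the incidence graph of a generalized $2$-gon is complete bipartite and $\Gamma$ is regular, Definition~\ref{def:mkalphaspecialpres} forces each component to be isomorphic to $K_{l(u),l(u)}$ with $l(u)\geq 3$, and there must be exactly $\nu$ of them. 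Counting the $2(n/d)$ vertices of a component against the $2l(u)$ vertices of $K_{l(u),l(u)}$ then gives $d=\nu$ and $n=\nu l(u)$, the first part of~(a). (Corollary~\ref{cor:nonorientablegirthatmost4} already guarantees the girth is compatible, and $k=2l(u)\geq 4$ is automatic.)

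Next I would analyse the bipartition of a single component $\Gamma_j\cong K_{l(u),l(u)}$; the feature distinguishing this from the alternating case of Theorem~\ref{thm:2knunonoralt} is that this bipartition does \emph{not} coincide with the positive/negative split, since $\bar{\mathcal{A}},\bar{\mathcal{B}}$ contribute positive--positive and negative--negative edges. Indexing the positive vertices of $\Gamma_j$ as $x_{j+t\nu}$ with $t\in\Z_{l(u)}$, the translation $t\mapsto t+1$ is a graph automorphism, so the subgraph induced on these positive vertices is a circulant; a vertex-transitive complete bipartite graph is balanced, so the positives split evenly and $l(u)$ is even, the induced positive subgraph being $K_{l(u)/2,l(u)/2}$. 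Because $\bar{\mathcal{A}}$ contains the boundary difference $n/2=(l(u)/2)\nu$, this circulant is of $\mathrm{circ}'$ type, and by the criterion recorded in Section~\ref{sec:stargraph} a $\mathrm{circ}'$ on $l(u)$ vertices equals $K_{l(u)/2,l(u)/2}$ precisely when $l(u)/2$ is odd and its connection set is $\{\pm 1,\pm 3,\ldots,\pm(l(u)/2-2),l(u)/2\}$. Rescaling by $\nu$ yields the prescribed form of $\bar{\mathcal{A}}$ in~(b), while $l(u)/2$ odd together with $l(u)\geq 3$ gives $l(u)\equiv 2\bmod 4$ and $l(u)\geq 6$, completing~(a); the identical argument on the negative vertices produces $\bar{\mathcal{B}}$.

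It then remains to pin down $\bar{\mathcal{Q}}$, which governs the positive--negative edges. Colouring $\Gamma_j$ by the parity of $t$ on the positive side and by the parity of $t'$ (up to a global shift) on the negative side $x_{j+t'\nu+q_0}^{-1}$, each $q\in\bar{\mathcal{Q}}$ joins $t$ to $t'=t+(q-q_0)/\nu$; requiring every such edge to respect the colouring forces $(q-q_0)/\nu$ to have fixed parity, i.e.\ the elements of $\bar{\mathcal{Q}}$ to lie in one class mod $2\nu$, and completeness of $K_{l(u),l(u)}$ across the parts forces them to exhaust that class, giving $\bar{\mathcal{Q}}=\{q_0,q_0+2\nu,\ldots,q_0+(n/\nu-2)\nu\}$. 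The coprimality $(q_0,\nu)=1$ is the translation of the standing irreducibility hypothesis, guaranteeing that $P_n(w)$ is not a proper lift of a special presentation on fewer generators (compare the use of irreducibility in the proof of Theorem~\ref{thm:2knunonoralt}). For the converse I would run the computation backwards: assuming (a)--(c), the $\mathrm{circ}'$ criterion makes the positive and negative induced subgraphs each $K_{l(u)/2,l(u)/2}$, the arithmetic form of $\bar{\mathcal{Q}}$ supplies all cross edges, and $(q_0,\nu)=1$ ensures exactly $\nu$ components, each a $K_{l(u),l(u)}$ of degree $l(u)\geq 3$, so that $P_n(w)$ is $(2,k,\nu)$-special.

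The main obstacle, exactly as in \cite[Theorem~E]{ChinyereWilliamsGP} and Theorem~\ref{thm:2knunonpositivenonnegativenonalternating}, will be the parity bookkeeping: tracking how the boundary contribution $n/2$ to $\bar{\mathcal{A}},\bar{\mathcal{B}}$ converts the relevant circulants from $\mathrm{circ}$ to $\mathrm{circ}'$ (which is precisely what turns the orientable congruence $l(u)\equiv 0\bmod 4$ into $l(u)\equiv 2\bmod 4$ here), and checking that the $2$-colouring induced on the positive vertices is compatible, through the $\bar{\mathcal{Q}}$-edges, with the one induced on the negative vertices. Keeping these two parity systems aligned, rather than the raw edge counts, is the delicate part.
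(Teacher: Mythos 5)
Your proposal is correct and follows essentially the same route as the paper: both read the star graph off Theorem~\ref{thm:stargraphnonorientableNEW}, force each component to be $K_{l(u),l(u)}$ (giving $d=\nu$ and $n=\nu l(u)$), and extract (a)--(c) from the induced bipartitions on the positive and negative vertices, the only cosmetic difference being that the paper gets $l(u)\equiv 2\bmod 4$ by an explicit closed-path parity and edge-count argument where you invoke the $\mathrm{circ}'$-criterion recorded in Section~\ref{sec:stargraph}. The one step you should make explicit is that the girth-$4$ requirement together with Corollary~\ref{cor:nonorientablegirthatmost4} forces $\iota(u),\tau(u)$ to have the same sign, so that $n/2$ lies in each of $\bar{\mathcal{A}},\bar{\mathcal{B}}$ with multiplicity one; this is what guarantees the induced positive subgraph actually has an edge, without which your ``vertex-transitive complete bipartite, hence balanced'' argument would not rule out all positive vertices lying in a single part.
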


\begin{proof}
Suppose first that $P_n(w)$ is $(2,k,\nu)$-special. Then $\bar{\mathcal{A}}, \bar{\mathcal{B}},\bar{\mathcal{Q}}$ are sets (i.e. they have no repeated elements) and by Corollary \ref{cor:nonorientablegirthatmost4} $\bar{\mathcal{A}}\backslash \bar{\mathcal{A}}'=\{n/2\}$ and $\bar{\mathcal{B}}\backslash \bar{\mathcal{B}}'=\{n/2\}$.

By Theorem \ref{thm:stargraphnonorientableNEW}, each component $\Gamma_i$ of the star graph $\Gamma$ of $P_n(w)$ is the complete bipartite graph $K_{l(u),l(u)}$ so $n=\nu l(u)$. Moreover, $\nu <n/2$, and hence $l(u)>2$, for otherwise each component has at most four vertices, so the vertices have degree at most 2, a contradiction. We use properties of $\Gamma$ provided by Theorem \ref{thm:stargraphnonorientableNEW} freely without further reference. The component $\Gamma_0$ has vertex set
\( V(\Gamma_0)=V(\Gamma_0^+) \cup V(\Gamma_0^-) \)
where  $\Gamma_0^+, \Gamma_0^-$ are the induced labelled subgraphs of $\Gamma_0$ with vertex sets
\[ V(\Gamma_0^+)= \{ x_0,x_\nu,\ldots , x_{(n/\nu-1)\nu}\}\]
and
\[ V(\Gamma_0^-)= \{ x_{q_0}^{-1},x_{q_0+\nu}^{-1},\ldots , x_{q_0+(n/\nu-1)\nu}^{-1}\}\]
for some $q_0\in \bar{\mathcal{Q}}$ (which is non-empty, since $w$ is non-alternating). In particular $\nu | a$ for all $a \in \bar{\mathcal{A}}'$ and $\nu | b$ for all $b \in \bar{\mathcal{B}}'$.

Suppose for contradiction that $\nu,n-\nu \not \in \bar{\mathcal{A}}$. Then for each $0\leq i<n$ vertices $x_i,x_{i+\nu}$ are not joined by an edge. Therefore the positive vertices of $\Gamma_0$ are all in the same part of $\Gamma_0$, so $\bar{\mathcal{A}}=\emptyset$. By Corollary \ref{cor:nonorientablegirthatmost4} $\iota(u),\tau(u)$ are both positive or both negative, so $n/2\in \bar{\mathcal{A}}$, a contradiction, so $\nu$ or $n-\nu\in \bar{\mathcal{A}}$. Similarly $\nu$ or $n-\nu\in \bar{\mathcal{B}}$. Therefore $\Gamma_0$ contains closed paths $x_0-x_\nu-\cdots - x_{(n/\nu-1)\nu}-x_0$ and $x_{q_0}^{-1}-x_{q_0+\nu}^{-1}-\cdots - x_{q_0+(n/\nu-1)\nu}^{-1}-x_{q_0}^{-1}$, each of length $n/\nu$, which is therefore even, since $\Gamma$ is bipartite. Therefore the vertices $x_\nu,x_{3\nu},\ldots ,x_{(n/\nu-1)\nu}$ are precisely those positive vertices of $\Gamma_0$ that belong to a different part of $\Gamma_0$ to $x_0$ (and so are neighbours of $x_0$) and the vertices $x_{q_0+\nu}^{-1},x_{q_0+3\nu}^{-1},\ldots , x_{q_0+(n/\nu-1)\nu}^{-1}$ are precisely those negative vertices of $\Gamma_0$ that belong to a different part of $\Gamma_0$ to $x_{q_0}^{-1}$ (and so are neighbours of $x_{q_0}^{-1}$). Hence $\bar{\mathcal{A}}, \bar{\mathcal{B}}$ are sets of the form $\{ \pm \nu,\pm 3\nu, \ldots , \pm (n/(2\nu)-2)\nu , n/2\}$. Moreover, the positive (resp.\,negative) vertices of $\Gamma_0$ induce a complete bipartite graph $K_{n/(2\nu),n/(2\nu)}$, which therefore has $(n/(2\nu))^2$ edges. Since each element of $\bar{\mathcal{A}}'$ contributes $n/\nu$ edges to $\Gamma_0$  and the element $n/2\in \bar{\mathcal{A}} \backslash \bar{\mathcal{A}}'$ contributes $(n/2)/\nu$ edges to $\Gamma_0$, we have $n/\nu (|\bar{\mathcal{A}}|-1)+(n/\nu)/2=$
 $(n/(2\nu))^2$ so $|\bar{\mathcal{A}}'|+1=|\bar{\mathcal{A}}|=((n/\nu)+2)/4$. Similarly $|\bar{\mathcal{B}}'|+1=|\bar{\mathcal{B}}|=((n/\nu)+2)/4$. In particular $n/\nu\equiv 2 \bmod 4$ so parts (a),(b) hold.

Since $x_{q_0}^{-1},x_{q_0+\nu}^{-1}$ belong to different parts of $\Gamma_0$, and there is an edge $x_0-x_{q_0}^{-1}$, the vertices $x_0,x_{q_0+\nu}^{-1}$ belong to the same part of $\Gamma_0$. Hence $\bar{\mathcal{Q}}=\{q_0, q_0+2\nu, \ldots , q_0+(n/\nu-2)\nu \}$. Finally $\mathrm{gcd}(q_0,\nu)$ divides $\mathrm{gcd}(n, a\ (a \in \bar{\mathcal{A}}), b\ (b \in \bar{\mathcal{B}}))=1$, since $P_n(w)$ is irreducible, so $\mathrm{gcd}(q_0,\nu)=1$, and so (c) holds.

Now suppose that the conditions of the statement hold. Then $\Gamma$ has $\nu$ isomorphic components. Consider the component $\Gamma_0$. The set of neighbours of $x_{j\nu}$ ($0\leq j<n/\nu$) is the set
\[\begin{cases}
\{x_{\nu},x_{3\nu}, \ldots , x_{(n/\nu-1)\nu}\} \cup \{x_{q_0}^{-1},x_{q_0+2\nu}^{-1}, \ldots , x_{q_0+(n/\nu-2)\nu}^{-1}\} & \mathrm{if}~j~\mathrm{is~even},\\
\{x_{0},x_{2\nu}, \ldots , x_{(n/\nu-2)\nu}\} \cup \{x_{q_0+\nu}^{-1},x_{q_0+3\nu}^{-1}, \ldots , x_{q_0+(n/\nu-1)\nu}^{-1}\} & \mathrm{if}~j~\mathrm{is~odd}
\end{cases}\]
and so $\Gamma_0$ is bipartite with vertex partition
\begin{alignat*}{1}
&\ \{x_\nu,x_{3\nu},\ldots ,x_{(n/\nu-1)\nu}, x_{q_0}^{-1}, x_{q_0+2\nu}^{-1}, \ldots , x_{q_0+(n/\nu-2)\nu}^{-1} \} \cup\\
&\ \quad \quad \{x_0,x_{2\nu},\ldots ,x_{(n/\nu-2)\nu}, x_{q_0+\nu}^{-1}, x_{q_0+3\nu}^{-1}, \ldots , x_{q_0+(n/\nu-1)\nu}^{-1} \}.
\end{alignat*}
Further, for each $0\leq j<n/\nu$ the set of neighbours $N_\Gamma(x_{q_0+j\nu}^{-1})$ $=N_\Gamma (x_{j\nu})$, so $\Gamma_0$ is isomorphic to $K_{n/\nu,n/\nu}$, as required.
\end{proof}

\bibliographystyle{plain}

\end{document}